\documentclass[10pt, a4paper]{article}

\usepackage[english]{babel}

\usepackage{mathrsfs}

\usepackage{todonotes}  

\usepackage[noadjust]{cite} 
\usepackage{xcolor}

\usepackage{hyperref}
\hypersetup{
    colorlinks,
    citecolor=black,
    filecolor=black,
    linkcolor=black,
    urlcolor=black    
}

\usepackage[utf8]{inputenc}
\usepackage{amsmath}
\usepackage{amssymb}
\usepackage{mathtools}
\usepackage{cancel}
\usepackage {graphicx}
\usepackage{floatrow}
\usepackage{multirow}
\usepackage{mathalfa}
\usepackage{dutchcal}
\usepackage{tikz}

\usepackage{tikz-cd}
\usetikzlibrary{arrows}
\usetikzlibrary{decorations.pathmorphing}
\usetikzlibrary{cd}

\usepackage{amssymb,amsthm}

\usepackage{amsfonts}
\usepackage{latexsym}
\usepackage{color}
\usepackage{subcaption}
\usepackage{caption}
\usepackage{multicol}
\usepackage{fullpage}
\usepackage{bbold}

\newtheorem{teorema}{Theorem}[section]
\newtheorem{definizione}[teorema]{Definition}
 \newtheorem{corollario}[teorema]{Corollary}
\newtheorem{lemma}[teorema]{Lemma}
\newtheorem{proposizione}[teorema]{Proposition}
\newtheorem{osservazione}[teorema]{Remark}

\newcommand{\numberset}{\mathbb}

\newcommand{\R}{\numberset{R}}

\newcommand{\EMAIL}[1]{\href{mailto:#1}{\texttt{#1}}}

\title{On the relation between transformations of SDEs and of associated Kolmogorov equations}
\date{}
\author{
Susanna Dehò \footnote{Dipartimento di Matematica, Universit\`a degli Studi di Milano, \EMAIL{susanna.deho@unimi.it}} \and
	Francesco C. De Vecchi\footnote{Dipartimento di Matematica ``Felice Casorati'', Universit\`a degli Studi di Pavia
		\EMAIL{francescocarlo.devecchi@unipv.it}}
	\and 
	Paola Morando \footnote{DISAA, Universit\`a degli Studi di Milano,  \EMAIL{paola.morando@unimi.it}}
	\and
	Stefania Ugolini \footnote{Dipartimento di Matematica, Universit\`a degli Studi di Milano, \EMAIL{stefania.ugolini@unimi.it}}
}	

\begin{document}
\maketitle

\begin{abstract}
The relationship between transformations of one-dimensional stochastic differential equations and those of the associated Kolmogorov backward equations is investigated. Stochastic transformations combining deterministic time changes, space-time diffeomorphisms, and Girsanov-type changes of probability measure are shown to be in correspondence with admissible transformations of the associated linear parabolic equations, namely the transformations preserving the parabolic structure up to the possible introduction of a zero-order potential term. 
A distinguished subclass of transformations, characterized by the absence of any potential term in the transformed Kolmogorov equation, is shown to coincide exactly with Doob transformations of the underlying probability measure of the SDE. 
This characterization further establishes a correspondence between Doob symmetries of an SDE and Lie point symmetries of the associated Kolmogorov backward equation. This correspondence, in turn, yields a constructive (probabilistic) Lie symmetry classification of autonomous Kolmogorov backward equations.\end{abstract}
\medskip

\textbf{Keywords} {Stochastic differential equations; Kolmogorov backward equations;
quasi-Doob transforms; Lie symmetry analysis of SDEs; classification of Lie symmetries of KBEs}
\medskip

AMS SUBJ {60H10; 58D19; 60H07}

\section*{Introduction}

In his seminal work \cite{kolmogorov1931}, Kolmogorov posed the fundamental problem of determining the conditions under which a one-dimensional diffusion process can be reduced to a standard Wiener process. This question was subsequently investigated by several authors (e.g., \cite{bluman,ricciardi,cherkasov1957}), who shared a common methodological approach: exploiting the deep connection between diffusion processes and parabolic equations provided by the Feynman--Kac formula. Within this framework, reducing a one-dimensional stochastic differential equation (SDE) driven by a Brownian motion,
\begin{equation}
    dX_t = \mu(X_t,t) \, dt + \sigma(X_t,t) \, dW_t,
\end{equation}
to a standard Brownian motion is analytically equivalent to transforming the associated Kolmogorov backward equation (KBE),
\begin{equation}
    u_t + \mu(x,t) u_x + \frac{1}{2}\sigma^2(x,t) u_{xx} = 0,
\end{equation}
into the heat equation. Naturally, this perspective requires a rigorous correspondence between the transformations applied to the SDE and those applied to the associated KBE.

The significance of this deterministic viewpoint is underscored by Lie's classical theorem \cite{lie1881}, which establishes that a one-dimensional linear parabolic equation can be mapped to the heat equation if and only if it admits a six-dimensional Lie algebra of point symmetries. Consequently, Kolmogorov's original question essentially revolves around three interconnected problems, which form the core of the present paper: the reducibility of a one-dimensional stochastic process to a Brownian motion; the exact correspondence between stochastic transformations and those on the associated KBE; and the classification of the symmetry algebras of a one-dimensional KBE (and, by extension, its underlying SDE).
\\
\\
The primary novelty of this work lies in its inherently probabilistic methodology. We approach the reducibility problem by operating directly on the SDE, subsequently demonstrating a probabilistically meaningful correspondence with the transformations of the associated KBE. This highlights a fundamental limitation in the classical deterministic approaches of \cite{bluman, cherkasov1957, ricciardi}: stochastic transformations of SDEs are intrinsically richer than those of deterministic partial differential equations (PDEs). A diffusion process is not governed solely by its local dynamics in the state space; its probabilistic structure also crucially relies on the driving Brownian motion, the underlying filtration, and the specific probability measure. Thus, alongside spatial transformations and time reparametrizations, it is natural to incorporate transformations of the driving noise and changes of the probability measure.

This observation naturally raises the following question: which transformations of an SDE preserve its analytic counterpart, the Kolmogorov equation, in a structurally meaningful sense? While the celebrated Feynman--Kac formula links these two domains by representing KBE solutions as conditional expectations of diffusion functionals, this does not imply that their natural transformation theories coincide. As noted in the literature, deterministic space-time transformations yield a restrictively narrow class of stochastic transformations \cite{simmetriekolmogorov, gaetarodriguez1}, whereas arbitrary Girsanov measure changes are too broad to preserve the specific parabolic structure of the KBE \cite{paper2020}. The main objective of this work is to identify the precise intermediate class of stochastic transformations and formulate precisely its analytic counterpart within the KBE framework.

To this end, we define a general stochastic transformation of a one-dimensional SDE as a composition of three distinct operations: a deterministic time change, a space-time diffeomorphism, and a Girsanov-type change of probability measure. We formulate their action on both the SDE solutions and the equation's coefficients to ensure structural consistency. Specifically, if the pair $(X,W)$ solves the original SDE, the transformed pair $(P_T(X),P_T(W))$ must solve the transformed SDE with respect to the new filtered probability space. This finite-transformation perspective is highly natural from a probabilistic standpoint, as it simultaneously tracks the transformed process, the new Brownian motion, and the Radon--Nikodym density governing the change of measure.
\\
\\
The central result of this paper is the establishment of a rigorous correspondence between stochastic transformations of one-dimensional SDEs and form-preserving transformations of the associated KBE (Theorem \ref{teo quasi doob}). On the analytic side, we designate as \textit{admissible transformations} those that map a linear parabolic equation into another linear parabolic equation, allowing for the potential addition of a zero-order term. These act on the dependent variable via a multiplicative factor and, optionally, an additive solution term reflecting the equation's linearity. On the stochastic side, we demonstrate that the corresponding operations are \textit{quasi-Doob transformations}, also known as generalized Doob transformations (see, e.g., \cite{chetrite2015nonequilibrium}). Specifically, the multiplicative factor in the PDE transformation corresponds precisely to a Girsanov density of quasi-Doob type for the SDE. In dimension one, under standard non-degeneracy assumptions on the diffusion coefficient, every Girsanov-type transformation can be locally expressed in this form \cite{paper2020}.

A distinguished subclass emerges when the KBE transformation strictly preserves the structure without introducing a potential term. We demonstrate that this rigid structural preservation corresponds precisely to \textit{Doob-type transformations} on the stochastic side (Corollary \ref{doob kbe fin}). Consequently, Doob transformations represent the exact probabilistic analogue of transformations that rigidly preserve the Kolmogorov backward structure, thereby identifying the optimal intermediate class between deterministic space-time transformations and unconstrained Girsanov measure changes. We recall that Doob changes of measure are associated with harmonic functions of the infinitesimal generator and naturally arise in the context of conditioning Markov processes \cite{chetrite2015nonequilibrium}.

Furthermore, we establish a direct probabilistic interpretation of this correspondence. If the multiplicative transformation of the PDE is denoted by $\hat{u}(\hat{x},\hat{t}) = \theta(x,t)u(x,t)$, then its stochastic counterpart, $u(\hat{x},\hat{t}) = \frac{\hat{u}(\hat{x},\hat{t})}{\theta(x,t)}$, is precisely Bayes' rule for conditional expectations under a change of probability measure (Theorem \ref{teo interpretazione probabilistica}). Under a quasi-Doob transformation, the resulting KBE acquires a potential term, and the corresponding Feynman--Kac representation absorbs the associated exponential weight. In the strict Doob case, the potential term vanishes, and the Radon--Nikodym density reduces to the classical Doob form \eqref{dQdP D}, affording a transparent probabilistic meaning to the multiplicative component of parabolic transformations. We also detail the infinitesimal counterparts of these correspondences for both quasi-Doob and Doob cases (Theorem \ref{doob kbe inf}).
\\
\\
Having fully resolved the structural correspondence between SDE and KBE transformations, it is natural to ask why our framework incorporates broader Girsanov-type changes of measure, given that the most pertinent intermediate classes are of (quasi-)Doob type. The answer is deeply tied to Kolmogorov's original reducibility problem. We demonstrate that, under standard regularity and non-degeneracy assumptions, any one-dimensional diffusion can be reconstructed from a standard Brownian motion by coupling a Lamperti transformation with an appropriate Girsanov change of measure. This provides a canonical methodology for viewing Brownian motion as a universal reference model from which general one-dimensional diffusions can be derived.

Consequently, our answer to Kolmogorov's question of when a stochastic diffusion is reducible to a Wiener process is, essentially, ``always'' - provided that standard non-degeneracy holds and probability measure changes are permitted. At first glance, this might appear to contradict the conclusions of \cite{bluman, ricciardi, cherkasov1957} and, notably, Lie's condition \cite{lie1881}, given that there exist KBEs with non-maximal symmetry algebras. However, this apparent paradox is resolved by our purely probabilistic perspective. By establishing the map that transforms Brownian motion into a given SDE, we leverage our transformation correspondence theorem to determine the image of the corresponding heat equation. The necessary addition of a zero-order potential term in the resulting KBE guarantees that the number of symmetries is rigorously preserved, fully respecting Lie's theorem.

For example, applying a change of measure $\mathbb{P} \mapsto \mathbb{Q}$ to a $\mathbb{P}$-Brownian motion, $dX_t = dW_t$, with the Radon--Nikodym density
\begin{equation}
    \left.\frac{d\mathbb{Q}}{d\mathbb{P}}\right|_{\mathcal{F}_t} = \exp \left( \int_0^t \frac{2}{X_s} \, dW_s - \int_0^t \frac{2}{X^2_s} \, ds \right),
\end{equation}
yields the transformed SDE
\begin{equation}\label{SDE2 intro}
    dX_t = \frac{2}{X_t} \, dt + dW^{\mathbb{Q}}_t,
\end{equation}
where $W^{\mathbb{Q}}_t = W^{\mathbb{P}}_t - \int_0^t \frac{2}{X_s} \, ds$ is a $\mathbb{Q}$-Brownian motion via Girsanov's theorem. The KBE associated with this SDE is
\begin{equation}
    u_t + \frac{2}{x} u_x + \frac{1}{2} u_{xx} = 0,
\end{equation}
which admits an algebra of 4 (rather than 6) symmetries. Lie's result is preserved because our correspondence theorem reveals that the mapping from Brownian motion to the SDE \eqref{SDE2 intro} corresponds to mapping the heat equation to
\begin{equation}
    u_t + \frac{2}{x} u_x + \frac{1}{2} u_{xx} + \frac{1}{x^2} u = 0.
\end{equation}
The inclusion of the potential term $\frac{1}{x^2} u$ precisely increases the symmetry count from 4 to 6.
\\
\\
This structural insight directly informs the third focus of this paper: the classification of the symmetries of a one-dimensional KBE via a purely probabilistic approach, and their exact correspondence with the symmetries of the underlying SDE. Beyond its utility in reducibility problems, symmetry analysis provides powerful tools for model reduction, classification, and explicit integration \cite{Olver,olver2,Stephani,gaetanonlinear}. While Lie symmetries robustly characterize deterministic invariants, defining symmetries for SDEs requires considerably more delicacy.

When restricted to symmetry transformations, the mapping established above yields a one-to-one correspondence between the stochastic symmetries of the SDE and the Lie point symmetries of the associated KBE. Crucially, Doob symmetries of the SDE correspond to genuine Lie point symmetries of the KBE, while quasi-Doob symmetries correspond to transformations that preserve the parabolic structure only up to a potential term (Corollary \ref{corollario quasi doob}). This clarifies why space-time diffeomorphisms yield too few stochastic symmetries, while arbitrary Girsanov transformations yield too many, to perfectly mirror the deterministic Lie theory of the PDE.

Finally, reconstructing a diffusion from Brownian motion induces a natural transport mechanism for symmetries, enabling the symmetry structure of a general diffusion to be derived directly from that of the Wiener process (Proposition \ref{sym sde generiche}). This perspective culminates in a complete, constructive characterization of the possible symmetry structures in the autonomous one-dimensional setting. We prove that only three regimes exist, corresponding to maximal, intermediate, and trivial symmetry algebras (Theorem \ref{classification theorem}, Theorem \ref{classification mu sigma}). By applying a Lamperti transform to normalize the diffusion coefficient and subsequently transporting the symmetry structure from Brownian motion, we obtain a fully constructive classification scheme.

From a fundamentally probabilistic viewpoint, our results elegantly recover the classical symmetry classifications of one-dimensional parabolic equations traditionally obtained via deterministic methods \cite{cicogna,gungor}. Rather than solving the determining equations of the PDE, we deduce the symmetries through the transport of Brownian motion invariants coupled with the Doob condition. The defining contribution of the present work thus lies in this unifying probabilistic approach and the complete structural isomorphism established between the stochastic and analytic domains.
\\
\\
\\
\\
The paper is organized as follows. In Section~1 we recall the reconstruction
of one-dimensional diffusions from Brownian motion through Lamperti
transformations and Girsanov-type changes of measure. We also introduce Doob
and quasi-Doob transformations and the general class of stochastic
transformations used throughout the paper. In Section~2 we establish the main
correspondence between stochastic transformations of SDEs and admissible
transformations of the associated KBE, including the
specific Doob case as well as the probabilistic interpretation in terms of
Feynman--Kac representations and Bayes' rule. We then formulate the
infinitesimal version of this correspondence through the associated flow
reconstruction. We specialize the general theory
to symmetries in Section \ref{section symmetries}, where we show how symmetries may be transported from Brownian motion
to general one-dimensional SDEs, and we discuss the  relation with Lie
point symmetries of the corresponding KBE. In Section \ref{sezione classificazione} we provide the complete classification of Lie symmetries of a given autonomous SDE. The theory is finally applied to several classical stochastic models, including Bessel processes, Ornstein--Uhlenbeck dynamics, and the CIR
model in Section \ref{sezione esempi}.

\section{Reconstruction of one-dimensional SDEs from Brownian motion}
In this section we recall the classical result that every one-dimensional SDE with constant-sign diffusion coefficient can be reconstructed from  one-dimensional Brownian motion via a stochastic transformation $T$ that combines a spatial diffeomorphism and a Girsanov-type change of the underlying probability measure. The diffeomorphism, which is related to the well-known classical \textit{Lamperti transformation} (see Section \ref{sezione lamperti}), primarily handles the diffusion term, while the change of measure allows the introduction of an additive drift term.

In order to fix notation, let us recall the definition of one-dimensional SDE.

\begin{definizione}[Weak and strong solutions]\label{weak and strong solutions}
Let \(T>0\), \(x\in\mathbb{R}\), and let
\[
\mu,\sigma:\mathbb{R}\times[0,T]\longrightarrow\mathbb{R}
\]
be Borel measurable functions. Consider the one-dimensional stochastic differential equation
\begin{equation}\label{SDE}
    dX_t=\mu(X_t,t)\,dt+\sigma(X_t,t)\,dW_t,
    \qquad X_0=x.
\end{equation}

A \emph{weak solution} of \eqref{SDE} is a filtered probability space
\[
(\Omega,\mathcal F,(\mathcal F_t)_{t\in[0,T]},\mathbb P),
\]
satisfying the usual conditions, together with an
\((\mathcal F_t)\)-Brownian motion \(W\) and an
\((\mathcal F_t)\)-adapted continuous process \(X\), such that
\[
\int_0^T |\mu(X_s,s)|\,ds<\infty,
\qquad
\int_0^T |\sigma(X_s,s)|^2\,ds<\infty
\quad \mathbb P\text{-a.s.},
\]
and, for every \(t\in[0,T]\),
\begin{equation}\label{sol}
    X_t
    =
    x+\int_0^t\mu(X_s,s)\,ds
    +\int_0^t\sigma(X_s,s)\,dW_s,
    \qquad \mathbb P\text{-a.s.}
\end{equation}

A \emph{strong solution} of \eqref{SDE}, with respect to a given Brownian motion \(W\) on a given filtered probability space
\[
(\Omega,\mathcal F,(\mathcal F_t)_{t\in[0,T]},\mathbb P),
\]
is a continuous process \(X\), adapted to the completed natural filtration generated by \(W\), such that the above integrability conditions hold and \eqref{sol} is satisfied for every \(t\in[0,T]\), \(\mathbb P\)-almost surely.

The SDE \eqref{SDE} is closely connected to a second-order linear parabolic PDE, known as the Kolmogorov backward equation  (see, e.g., \cite[Chapter 8]{oksendal}):
\begin{equation}\label{kbe a}
    u_t + \mu(x,t)u_x + \frac{1}{2}\sigma^2(x,t)u_{xx} = 0,
\end{equation}
which is commonly written in the compact form
\[ L_t u = 0, \]
where 
\begin{equation}\label{generator}
L_t = \partial_t + \mu\partial_x + \frac{1}{2}\sigma^2 \partial_{xx}
\end{equation}
is the space-time infinitesimal generator associated with \eqref{SDE}.

\end{definizione}
\begin{osservazione}
If $X$ is a strong solution, the filtered probability space and the version of the Brownian motion are given a priori, and the solution $X_t$ is then constructed accordingly, ensuring that it is adapted to the filtration generated by the given Brownian motion. On the other hand, if $ X$ is a weak solution, only the functions $\mu$ and $\sigma$ are specified in advance, and we seek a pair $ (X,W)$ along with a filtered probability space such that $ X$ is adapted to the filtration and 
$W$ is a Brownian motion with respect to it. In this case, $ X$ does not necessarily have to be adapted to the filtration generated by $ W.$ A strong solution is of course also a weak solution, but the converse is not true in general (see \textit{Tanaka equation} in \cite{oksendal}).
\end{osservazione}

In the following we assume that there always exists a weak solution to equation \eqref{SDE}.

\subsection{The Lamperti transformation of an SDE}\label{sezione lamperti}

In this section, we briefly recall some fundamental facts about the Lamperti transformation. The core idea behind this procedure is to apply a suitable spatial mapping $F$ to the solution of an SDE in order to obtain, via It\^{o}'s lemma, a new stochastic dynamics characterized by a constant diffusion coefficient. This transformation is widely employed to bypass theoretical and numerical issues typically arising from state-dependent diffusion coefficients. Since our ultimate goal is to relate general SDEs to the standard Brownian motion, our first step is to exploit this transformation to reduce the original SDE to one with a unit diffusion coefficient.

Following mainly \cite{luschgy2006functional}, we introduce the Lamperti transformation by assuming the following regularity conditions:
\medskip

\noindent\textbf{Assumption A }
For a diffusion process $X$ taking values in an open interval of the real line $I \subseteq \mathbb{R}$, we assume that:
\begin{enumerate}
    \item $\sigma \in \mathcal{C}^1(I \times [0,T] ,\R)$
    \item $ |\mu(x,t)| \leq C(1+|x|)$ and  $ 0<\sigma(x,t) \leq C(1+|x|)$ $\forall (x,t)\in I \times [0,T]. $
\end{enumerate}

\medskip

\begin{definizione}[Lamperti transformation]\label{def Lamperti transform}
    Under Assumption A, if \( I\equiv \R\) the Lamperti transformation is defined for every \( (x,t) \in \R \times [0,T]\) by 
    \begin{equation}\label{Lamperti}
    F(x,t) := \int_{x_0}^x \frac{1}{\sigma(\xi,t)}d\xi,
\end{equation}
where $x_0 \in \R$ is an arbitrary constant, typically chosen such that the transformed process satisfies $F(X_0,0)=0$.

On the other hand, if \( I \subsetneq \R\) the Lamperti transformation is defined for every \( (x,t) \in I \times [0,T]\) by
  \begin{equation}\label{Lamperti estesa}
    F(x,t) := \int_{x_1}^x \frac{1}{\sigma(\xi,t)}d\xi,
\end{equation}
where \( x_1\) is an arbitrary fixed value lying inside \( I\) such that
\[ \forall t \in [0,T], \quad \int_{[x_1,+\infty)\cap I} \frac{d\xi}{\sigma(\xi,t)}=\infty \quad \text{and} \quad \int_{(-\infty,x_1]\cap I} \frac{d\xi}{\sigma(\xi,t)}=\infty. \]
\end{definizione}

\medskip

\begin{teorema}[Well-posedness of the Lamperti transformation]\label{th lamperti}
Let $X_t$ be an It\^{o} process with dynamics governed by \eqref{SDE}, satisfying Assumption A. Let $F(x,t)$ be the Lamperti transformation given in Definition \ref{def Lamperti transform}. Then, the transformed process $Z_t := F(X_t,t)$ satisfies the following SDE with a unit diffusion coefficient:
\begin{equation*}
	dZ_t = \biggl(\frac{\partial F(F^{-1}(Z_t,t),t)}{\partial t}+ \frac{\mu(F^{-1}(Z_t,t),t)}{\sigma(F^{-1}(Z_t,t),t)}-\frac{1}{2}\frac{\partial \sigma (F^{-1}(Z_t,t),t)}{\partial x}\biggr) dt + dW_t.
\end{equation*}
\end{teorema}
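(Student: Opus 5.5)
The plan is to apply It\^{o}'s formula to $Z_t=F(X_t,t)$ and then rewrite every coefficient in terms of $Z_t$ by inverting $F(\cdot,t)$. First I would check that $F$ is regular enough. By Assumption A, $\sigma\in\mathcal C^1(I\times[0,T])$ and $\sigma>0$, so $1/\sigma$ is $\mathcal C^1$. This gives
\[
\partial_x F=\frac{1}{\sigma},\qquad \partial_{xx}F=-\frac{\partial_x\sigma}{\sigma^2},
\]
and both are continuous. For the time derivative, the integral runs over the compact interval between $x_0$ (or $x_1$) and $x$, and $\partial_t\sigma/\sigma^2$ is continuous there. Differentiating under the integral sign therefore gives
\[
\partial_t F(x,t)=-\int_{x_0}^x\frac{\partial_t\sigma(\xi,t)}{\sigma^2(\xi,t)}\,d\xi,
\]
which is continuous in $(x,t)$. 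Hence $F\in\mathcal C^{2,1}(I\times[0,T])$, which is exactly the regularity It\^{o}'s formula needs.

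Next I would show that $F(\cdot,t)$ is a bijection from $I$ onto $\R$, so that $F^{-1}(z,t)$ makes sense for every $z$. Since $\sigma>0$, $F(\cdot,t)$ is strictly increasing. When $I=\R$, the linear growth bound $\sigma\le C(1+|x|)$ gives $\int^{\pm\infty}d\xi/\sigma\ge \int^{\pm\infty} d\xi/(C(1+|\xi|))=\infty$, so $F(\cdot,t)$ is onto $\R$. When $I\subsetneq\R$, surjectivity is exactly the divergence condition imposed on $x_1$ in Definition \ref{def Lamperti transform}. By the implicit function theorem, the inverse $F^{-1}$ is continuous, indeed $\mathcal C^1$, in $(z,t)$.

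Then I would apply It\^{o}'s formula, using $d\langle X\rangle_t=\sigma^2(X_t,t)\,dt$:
\[
dZ_t=\partial_tF\,dt+\frac{1}{\sigma}\big(\mu\,dt+\sigma\,dW_t\big)+\frac12\Big(-\frac{\partial_x\sigma}{\sigma^2}\Big)\sigma^2\,dt
=\Big(\partial_tF+\frac{\mu}{\sigma}-\frac12\partial_x\sigma\Big)dt+dW_t,
\]
with all coefficients evaluated at $(X_t,t)$. Substituting $X_t=F^{-1}(Z_t,t)$ gives the stated SDE. Here $\partial F(F^{-1}(Z_t,t),t)/\partial t$ is read as $\partial_tF$ evaluated at the point $(F^{-1}(Z_t,t),t)$. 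The diffusion coefficient is exactly $1$ because $\partial_xF\cdot\sigma=1$.

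The main obstacle is the case $I\subsetneq\R$. There $F$ is only $\mathcal C^{2,1}$ on the open set $I\times[0,T]$, and its derivatives may blow up near $\partial I$, so It\^{o}'s formula cannot be applied globally in one step. I would localize with the stopping times $\tau_n=\inf\{t:X_t\notin[a_n,b_n]\}$, where $[a_n,b_n]$ is an exhausting sequence of compact subintervals of $I$. On each $[0,\tau_n]$, It\^{o}'s formula holds with bounded continuous coefficients. Since $X$ takes values in $I$ by assumption, $\tau_n\uparrow T$ almost surely, and the identity extends to all of $[0,T]$. The same localization shows that the integrability requirements of Definition \ref{weak and strong solutions} hold for $Z$, because its drift is continuous along the continuous path $X_t$.
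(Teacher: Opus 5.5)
Your proposal is correct and takes the standard route. The paper gives no argument of its own and simply refers to Luschgy--Pag\`es, where the result comes from exactly this It\^{o}-formula computation using $\partial_x F=1/\sigma$ and $\partial_{xx}F=-\partial_x\sigma/\sigma^2$; your discussion of the $\mathcal{C}^{2,1}$ regularity, the bijectivity of $F(\cdot,t)$ and the localization for $I\subsetneq\R$ fills in details the paper leaves to that reference.

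One small correction: $\mu$ is only Borel, so the drift of $Z$ need not be continuous along the path. Integrability should instead come from $\int_0^T|\mu(X_s,s)|\,ds<\infty$ (or the growth bound in Assumption A), together with $\sigma(X_s,s)$ being bounded below by a positive constant along the compact path.
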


\begin{proof} See, e.g., \cite{luschgy2006functional}.

\end{proof}

\begin{osservazione}\label{invertible lamperti}
    For every \( t \in [0,T]\), the map \( x \mapsto F(x,t)\) is continuous, strictly increasing and one-to-one from \(\R\) to \(\R\). Furthermore, it has a continuous and differentiable inverse denoted by \( F^{-1}(x,t)\). See \cite{luschgy2006functional} for a complete discussion.
\end{osservazione}

\begin{osservazione}
    Although multidimensional generalizations are possible under highly restrictive integrability conditions, the Lamperti transformation is inherently a one-dimensional procedure. For this reason, the focus of the present paper is restricted to one-dimensional SDEs.
\end{osservazione}

\begin{osservazione}\label{remark segno sigma}
    Regarding the regularity conditions in Assumptions A, the strict positivity of $\sigma$ is assumed primarily for convenience and without loss of generality. The fundamental requirement for the Lamperti transformation is that the diffusion coefficient must not vanish on the reference interval. Therefore, one could equally assume $\sigma$ to be strictly negative.
\end{osservazione}

\begin{osservazione}
    In the autonomous case (i.e., \( \sigma=\sigma(x)\) ), the regularity assumptions on \(\sigma\) may be relaxed into
    \[ \sigma(y)-\sigma(x)=\int_x^y \sigma'(u)du, \quad \sigma' \in L^1_{loc}(dx),\]
    where \(dx\) denotes the classical Lebesgue measure on \(\R\). See \cite{luschgy2006functional} for a comprehensive discussion.
\end{osservazione}

In the remainder of this paper, we will implicitly assume that the underlying regularity conditions ensuring the well-posedness of the Lamperti transformation hold.

 \subsection{ Doob transformation of an SDE}\label{section_3}
We briefly recall the concepts of Doob transformations and quasi-Doob transformations (also known in the literature as generalized Doob transformations), mainly following \cite{chetrite2015nonequilibrium, paper2020, paperriduzione}. In the following, we consider only conservative transformations. For non-conservative (generalized) Doob transformations, see \cite{chetrite2015nonequilibrium}. As we will see in Section \ref{Section sde kbe}, the class of Doob and quasi-Doob transformations provides a natural setting when one wants to investigate the link between the transformations of a given SDE and those of the related KBE, also providing a probabilistic interpretation of the correspondence between the stochastic setting and the deterministic framework.

Although they can be introduced in a more general setting, for our future convenience we present Doob and quasi-Doob transformations as special cases of Girsanov-type transformations. These were studied in \cite{paper2020} and are briefly recalled at the beginning of this section.
\medskip

Let \( h:\mathbb{R}\times \mathbb{R}_+ \rightarrow \mathbb{R}\). We denote by \( Z_t\) the Doléans-Dade exponential defined by the process:
\begin{equation}\label{mtg esponenziale}
    Z_t = \exp\left(\int_0^t h(X_s,s)dW_s - \frac{1}{2}\int_0^t h^2(X_s,s)ds\right).
\end{equation}
Then, the following result is well-known (see, e.g., \cite{baldi}):

\begin{teorema}[Girsanov]
    Let \( W=(W_t)_t\) be a Brownian motion with respect to the filtered probability space \( (\Omega,\mathcal{F},(\mathcal{F}_t)_t,\mathbb{P})\) and assume that the Doléans-Dade exponential defined in \eqref{mtg esponenziale} is a \(\mathbb{P}\)-martingale. Then, the process \( W'=(W'_t)_t\) defined by
    \[ W'_t = W_t - \int_0^t h(X_s,s)ds\]
    is a Brownian motion with respect to the filtered probability space \( (\Omega,\mathcal{F},(\mathcal{F}_t)_t,\mathbb{Q}) \), where the Radon-Nikodym derivative of the probability measure \(\mathbb{Q}\) with respect to the original measure \( \mathbb{P}\) is given by
    \[ \left.\frac{d\mathbb{Q}}{d\mathbb{P}}\right|_{\mathcal{F}_t} = Z_t.\]
\end{teorema}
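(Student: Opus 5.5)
The plan is to prove Girsanov's theorem through Lévy's characterization of Brownian motion. Let $\mathbb{E}_{\mathbb{P}}$ denote expectation under $\mathbb{P}$. Since the Doléans-Dade exponential $Z$ is assumed to be a true $\mathbb{P}$-martingale, it satisfies $\mathbb{E}_{\mathbb{P}}[Z_t]=Z_0=1$. Hence $d\mathbb{Q}=Z_T\,d\mathbb{P}$ defines a probability measure on $\mathcal{F}_T$, equivalent to $\mathbb{P}$ because $Z_T>0$. The martingale property also gives consistency of the restrictions: $\left.\frac{d\mathbb{Q}}{d\mathbb{P}}\right|_{\mathcal{F}_t}=\mathbb{E}_{\mathbb{P}}[Z_T\mid\mathcal{F}_t]=Z_t$. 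The usual conditions carry over, since $\mathbb{Q}$ and $\mathbb{P}$ share the same null sets.

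By Lévy's characterization, it suffices to show that $W'$ is a continuous $\mathbb{Q}$-local martingale with $\langle W'\rangle_t=t$.

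\textbf{Quadratic variation.} Continuity of $W'$ is clear. The quadratic variation is a pathwise limit in probability, and this notion is invariant under equivalent changes of measure. The drift $\int_0^t h(X_s,s)\,ds$ has finite variation, so $\langle W'\rangle_t=\langle W\rangle_t=t$ holds under both $\mathbb{P}$ and $\mathbb{Q}$.

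\textbf{Local martingale property.} I would use the Bayes rule for conditional expectations: for $s\le t$ and $\mathcal{F}_t$-measurable $Y$,
\[
\mathbb{E}_{\mathbb{Q}}[Y\mid\mathcal{F}_s]=\frac{\mathbb{E}_{\mathbb{P}}[Z_tY\mid\mathcal{F}_s]}{Z_s}.
\]
It follows that an adapted process $M$ is a $\mathbb{Q}$-(local) martingale if and only if $ZM$ is a $\mathbb{P}$-(local) martingale. In the local case one uses the same localizing stopping times together with optional stopping for $Z$.

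So it remains to show that $ZW'$ is a $\mathbb{P}$-local martingale. From the definition, $Z$ solves $dZ_t=Z_t h(X_t,t)\,dW_t$. This follows from Itô's formula applied to $\exp$ of the semimartingale $\int_0^t h\,dW-\frac12\int_0^t h^2\,ds$, which is well defined because $\int_0^t h^2\,ds<\infty$ a.s. This integrability is implicit in the definition of the stochastic integral. Itô's product rule then gives
\[
d(ZW')=Z\,dW'+W'\,dZ+d\langle Z,W'\rangle
= Z\,dW - Zh\,dt + W'Zh\,dW + Zh\,dt .
\]
The $dt$ terms cancel, so $ZW'$ is a stochastic integral against $W$ and therefore a $\mathbb{P}$-local martingale.

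\textbf{Conclusion and main obstacle.} Combining the steps, $W'$ is a continuous $\mathbb{Q}$-local martingale with $\langle W'\rangle_t=t$, so it is a $\mathbb{Q}$-Brownian motion with respect to $(\mathcal{F}_t)$.

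The delicate step is the passage from "$ZM$ is a $\mathbb{P}$-local martingale" to "$M$ is a $\mathbb{Q}$-local martingale." To handle it, I would localize with stopping times $\tau_n$ that make both $ZW'$ and $W'$ bounded. I would then apply Bayes' rule at bounded stopping times, using the identity $\mathbb{E}_{\mathbb{P}}[Z_T\mid\mathcal{F}_{\tau}]=Z_{\tau}$, which comes from optional sampling for the true martingale $Z$. This is exactly the point where the hypothesis that $Z$ is a true martingale, and not merely a local one, is used.
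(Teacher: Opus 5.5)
Your proof is correct. The paper gives no proof of this statement: it quotes Girsanov's theorem as a classical result and refers to Baldi's textbook. What you wrote is the standard textbook argument for it:
\begin{itemize}
\item $dZ=Zh\,dW$, and Itô's product rule shows that $ZW'$ is a $\mathbb{P}$-local martingale;
\item the Bayes-rule lemma transfers this to the $\mathbb{Q}$-local martingale property of $W'$;
\item $\langle W'\rangle_t=t$ is unchanged under the equivalent change of measure;
\item Lévy's characterization concludes.
\end{itemize}

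Your localization step also works. With $\tau_n$ chosen so that $ZW'$ and $W'$ are bounded, apply optional sampling for the true martingale $Z$ at the bounded stopping time $(t\wedge\tau_n)\vee s$. Splitting on $\{\tau_n\le s\}$ and $\{\tau_n>s\}$ gives $\mathbb{E}_{\mathbb{P}}[Z_tW'_{t\wedge\tau_n}\mid\mathcal{F}_s]=Z_sW'_{s\wedge\tau_n}$. This is exactly where the true-martingale hypothesis on $Z$ enters, as you say.

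One point to state explicitly is that you fix a finite horizon $[0,T]$. This matches the paper's setting in Definition~\ref{weak and strong solutions}. On $[0,\infty)$, the prescription $d\mathbb{Q}/d\mathbb{P}|_{\mathcal{F}_t}=Z_t$ only gives a consistent family of measures on the $\sigma$-algebras $\mathcal{F}_t$. That family extends to a measure absolutely continuous with respect to $\mathbb{P}$ on $\mathcal{F}_\infty$ if and only if $Z$ is uniformly integrable.
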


In general, proving that \( Z\) is a \(\mathbb{P}\)-martingale in order to apply Girsanov's theorem is not an easy task, which is why the literature often uses sufficient conditions known as Novikov's criterion or Kazamaki's criterion (see, e.g., \cite{baldi}). In this context, the same goal can be achieved using a different condition, expressed in the following definition and applied in the subsequent lemma (see \cite{karatzas-shreve} for a detailed discussion).

\begin{definizione}\label{def_non_explosive}
    A smooth SDE with coefficients \(\mu\) and \(\sigma\) (denoted as \(\mathrm{SDE}_{\mu,\sigma}\)) is called non-explosive if any solution \( (X,W)\) is defined for all times \( t \geq 0\). A smooth function \( h\) is called non-explosive for the non-explosive \( \mathrm{SDE}_{\mu,\sigma}\) if the \( \mathrm{SDE}_{\mu+\sigma h, \sigma}\) is a non-explosive SDE.
\end{definizione}
    
\begin{lemma}\label{lemma non expl}
    Suppose that \( \mathrm{SDE}_{\mu,\sigma}\) is non-explosive with solution \( (X,W)\), and let \( h\) be a non-explosive function. Then the Doléans-Dade exponential \(Z_t\)  associated with \( h\) via \eqref{mtg esponenziale} is a (global) \(\mathbb{P}\)-martingale.
\end{lemma}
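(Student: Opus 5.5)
The plan is to prove the martingale property by localization, using non-explosion of $\mathrm{SDE}_{\mu+\sigma h,\sigma}$ to exclude any loss of mass. Since $Z$ is a positive continuous local martingale, it is a $\mathbb{P}$-supermartingale. Hence $Z$ is a true martingale on $[0,T]$ if and only if $\mathbb{E}_{\mathbb{P}}[Z_T]=1$ for every $T$. Everything therefore reduces to showing that no expectation is lost.

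\emph{Localization.} For $n\in\mathbb{N}$ define the exit times $\tau_n=\inf\{t\geq 0: |X_t|\geq n\}$. These satisfy $\tau_n\uparrow\infty$ $\mathbb{P}$-a.s. because $\mathrm{SDE}_{\mu,\sigma}$ is non-explosive. On $\{t\le\tau_n\}$ the process $X$ stays in the compact set $[-n,n]$, and $h$ is smooth, hence bounded there. Novikov's criterion then shows that the stopped exponential $Z^{\tau_n}$ is a true martingale. We define probability measures $\mathbb{Q}_n$ on $\mathcal{F}_{T\wedge\tau_n}$ by $d\mathbb{Q}_n=Z_{T\wedge\tau_n}\,d\mathbb{P}$. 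These are consistent: $\mathbb{Q}_{n+1}$ restricted to $\mathcal{F}_{T\wedge\tau_n}$ equals $\mathbb{Q}_n$.

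\emph{Identifying the law under $\mathbb{Q}_n$.} By Girsanov's theorem applied to the stopped density, $W'_t=W_t-\int_0^{t\wedge\tau_n}h(X_s,s)\,ds$ is a $\mathbb{Q}_n$-Brownian motion up to $\tau_n$. Consequently, under $\mathbb{Q}_n$ the process $X$ solves $dX=(\mu+\sigma h)\,dt+\sigma\,dW'$ up to time $\tau_n$.

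\emph{Key computation.} We have
\[
\mathbb{E}_{\mathbb{P}}[Z_T]\;\geq\;\mathbb{E}_{\mathbb{P}}\bigl[Z_T\mathbb{1}_{\{\tau_n>T\}}\bigr]\;=\;\mathbb{E}_{\mathbb{P}}\bigl[Z_{T\wedge\tau_n}\mathbb{1}_{\{\tau_n>T\}}\bigr]\;=\;\mathbb{Q}_n(\tau_n>T).
\]
Here $\{\tau_n>T\}\in\mathcal{F}_{T\wedge\tau_n}$. Since $\mathbb{Q}_n(\tau_n>T)$ depends only on the law of $X$ stopped at $\tau_n$, it equals $\tilde{\mathbb{P}}(\tilde\tau_n>T)$ for any (weak) solution $\tilde X$ of $\mathrm{SDE}_{\mu+\sigma h,\sigma}$ with the same initial point. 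This relies on uniqueness in law of the stopped solution, which follows from the smoothness, and hence local Lipschitz property, of the coefficients.

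\emph{Conclusion.} Non-explosiveness of $h$ means $\tilde\tau_n\to\infty$ a.s. Hence $\tilde{\mathbb{P}}(\tilde\tau_n>T)\to1$, and therefore $\mathbb{E}_{\mathbb{P}}[Z_T]\geq 1$. Combined with the supermartingale bound $\mathbb{E}_{\mathbb{P}}[Z_T]\le 1$, this gives $\mathbb{E}_{\mathbb{P}}[Z_T]=1$, so $Z$ is a martingale on $[0,T]$. Since $T$ is arbitrary, $Z$ is a global $\mathbb{P}$-martingale.

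The main obstacle is the step that transfers the event $\{\tau_n>T\}$ from $\mathbb{Q}_n$ to the law of the transformed SDE. It requires pathwise or weak uniqueness up to exit times, which comes from local Lipschitz continuity of smooth coefficients on compacts, together with care that $\mathbb{Q}_n$ is defined only on the stopped $\sigma$-algebra. I would handle this with a standard Yamada--Watanabe-type argument on the stopped equation; alternatively, as in \cite{karatzas-shreve}, one can construct the measures consistently on the canonical path space.
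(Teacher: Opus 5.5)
Your proof is correct. You reduce to $\mathbb{E}_{\mathbb{P}}[Z_T]=1$, apply Novikov on $[0,T\wedge\tau_n]$, use the bound $\mathbb{E}_{\mathbb{P}}[Z_T]\geq\mathbb{Q}_n(\tau_n>T)$, identify $\mathbb{Q}_n(\tau_n>T)$ with the exit probability of $\mathrm{SDE}_{\mu+\sigma h,\sigma}$ via uniqueness in law of the stopped equation (available because the coefficients are smooth, hence locally Lipschitz), and let non-explosion send that probability to $1$.

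The paper gives no proof of its own and only points to \cite{karatzas-shreve}. What you wrote is the standard localization proof of the cited fact, using the same exit-time localization with Novikov that the paper itself invokes in Remark~\ref{remark bessel localization}.

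The step you flag as the main obstacle is routine. Define $\mathbb{Q}_n$ on all of $\mathcal{F}_T$ with density $Z_{T\wedge\tau_n}$. Then $W'$ is a genuine $\mathbb{Q}_n$-Brownian motion on $[0,T]$, so no stopped $\sigma$-algebra issue arises. Moreover, for $n>|x|$ the event $\{\tau_n>T\}=\{\sup_{t\leq T}|X_{t\wedge\tau_n}|<n\}$ is a functional of the stopped path alone, whose law is fixed by that uniqueness.
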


With the aid of Girsanov's theorem, we can define a particular stochastic transformation of a given SDE as a change of the underlying probability measure, identified by the function \( h\) inside the Radon-Nikodym derivative \( Z\).

\begin{definizione}  \label{trasformazione di girsanov} 
    Consider a non-explosive \( \mathrm{SDE}_{\mu,\sigma}\), where \( W_t\) is a \( (\Omega, \mathcal{F},(\mathcal{F}_t)_t,\mathbb{P})\)-Brownian motion, and let \( h\) be a non-explosive function for \( \mathrm{SDE}_{\mu,\sigma}\). We define \( T\), denoted specifically by \(h\), as a Girsanov-type measure change if it changes the original probability measure from \( \mathbb{P}\) to \( \mathbb{Q}\), with Radon-Nikodym derivative
    \[ \left.\frac{d\mathbb{Q}}{d\mathbb{P}}\right|_{\mathcal{F}_t} = Z_t = \exp\left(\int_0^t h(X_s,s)dW_s - \frac{1}{2}\int_0^t h^2(X_s,s)ds\right). \]
\end{definizione}

\begin{teorema}[Well-posedness of Girsanov-type change of measure]\label{teo trasf girsanov}
    Let \( (X,W)\) be a solution to the non-explosive \( \mathrm{SDE}_{\mu,\sigma}\) on the filtered probability space \( (\Omega, \mathcal{F},(\mathcal{F}_t)_t,\mathbb{P})\), and let \( h\) be a non-explosive function for \( \mathrm{SDE}_{\mu,\sigma}.\) Then, \( (X,W')\) is a solution to the \( \mathrm{SDE}_{\mu',\sigma'}\) on the filtered probability space \( (\Omega, \mathcal{F},(\mathcal{F}_t)_t,\mathbb{Q})\), where
    \begin{align*}
        & W'_t = W_t - \int_0^t h(X_s,s)ds, \\
        & \mu' = \mu + \sigma h, \quad \sigma' = \sigma, \\
        & \left.\frac{d\mathbb{Q}}{d\mathbb{P}}\right|_{\mathcal{F}_t} = Z_t = \exp\left(\int_0^t h(X_s,s)dW_s - \frac{1}{2}\int_0^t h^2(X_s,s)ds\right).
    \end{align*}
\end{teorema}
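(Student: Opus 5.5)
The plan is to combine Lemma \ref{lemma non expl} with Girsanov's theorem, and then substitute the definition of \(W'\) into the integral equation satisfied by \((X,W)\).

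\textbf{Step 1: the density is a true martingale.} Since \(\mathrm{SDE}_{\mu,\sigma}\) is non-explosive and \(h\) is a non-explosive function for it, Lemma \ref{lemma non expl} shows that the Doléans-Dade exponential \(Z_t\) in \eqref{mtg esponenziale} is a global \(\mathbb{P}\)-martingale. It is strictly positive and satisfies \(\mathbb{E}_{\mathbb{P}}[Z_t]=1\). The prescription \(\left.\frac{d\mathbb{Q}}{d\mathbb{P}}\right|_{\mathcal{F}_t}=Z_t\) therefore defines a consistent family of probability measures on \((\mathcal{F}_t)_{t\in[0,T]}\). On the finite horizon \([0,T]\) we may simply take \(d\mathbb{Q}=Z_T\,d\mathbb{P}\) on \(\mathcal{F}_T\). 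Because \(Z_T>0\), \(\mathbb{Q}\) is equivalent to \(\mathbb{P}\).

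\textbf{Step 2: the new driving process is a Brownian motion.} Girsanov's theorem now applies. It gives that \(W'_t=W_t-\int_0^t h(X_s,s)\,ds\) is an \((\mathcal{F}_t)\)-Brownian motion under \(\mathbb{Q}\).

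\textbf{Step 3: the solution property transfers.} Equivalence of the measures makes all \(\mathbb{P}\)-a.s. statements hold \(\mathbb{Q}\)-a.s. This covers the integrability conditions on \(\mu\) and \(\sigma\), the identity \eqref{sol}, and the \(\mathbb{P}\)-a.s. finiteness of \(\int_0^T h^2(X_s,s)\,ds\), which is implicit in the definition of \(Z\). The filtration still satisfies the usual conditions under \(\mathbb{Q}\), and \(X\) remains adapted and continuous. Substituting \(dW_s=dW'_s+h(X_s,s)\,ds\) into \eqref{sol} gives
\[
X_t=x+\int_0^t\bigl(\mu(X_s,s)+\sigma(X_s,s)h(X_s,s)\bigr)\,ds+\int_0^t\sigma(X_s,s)\,dW'_s .
\]
Hence \((X,W')\) solves \(\mathrm{SDE}_{\mu+\sigma h,\sigma}\) under \(\mathbb{Q}\), that is, with \(\mu'=\mu+\sigma h\) and \(\sigma'=\sigma\). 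The integrability condition for the new drift holds by Cauchy--Schwarz, since \(\sigma\) and \(h\) are both locally square-integrable along paths.

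\textbf{Main obstacle.} The one delicate point is the substitution inside the stochastic integral. The integral \(\int\sigma\,dW\) was built under \(\mathbb{P}\), while \(\int\sigma\,dW'\) must be read under \(\mathbb{Q}\). I would handle this with the standard fact that, under an equivalent change of measure, a \(\mathbb{P}\)-semimartingale is a \(\mathbb{Q}\)-semimartingale and stochastic integrals agree up to indistinguishability. Under \(\mathbb{Q}\), the process \(W\) decomposes as \(W'+\int h\,ds\), so \(\int\sigma\,dW=\int\sigma\,dW'+\int\sigma h\,ds\) \(\mathbb{Q}\)-a.s. Localizing with stopping times if needed closes this step.
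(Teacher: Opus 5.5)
Your proposal is correct and follows the paper's own sketch: Lemma \ref{lemma non expl} gives the martingale property of $Z$, Girsanov's theorem makes $W'$ a $\mathbb{Q}$-Brownian motion, and substituting $dW_t = dW'_t + h(X_t,t)\,dt$ into the SDE yields the drift $\mu+\sigma h$. Your additional points fill in details the paper leaves implicit, namely the equivalence $\mathbb{Q}\sim\mathbb{P}$, the invariance of stochastic integrals under an equivalent change of measure, and the Cauchy--Schwarz bound for integrability of the new drift.
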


\begin{proof}
    This result was proved in \cite{paper2020}, but we give a sketch of the proof for the sake of clarity. From Lemma \ref{lemma non expl}, \( Z_t\) is a \(\mathbb{P}\)-martingale. By Girsanov's theorem, \( W'\) is a \(\mathbb{Q}\)-Brownian motion. Finally, since \( (X,W)\) solves the \( \mathrm{SDE}_{\mu,\sigma}\), we have:
    \begin{align*}
        dX_t &= \mu(X_t,t)dt + \sigma(X_t,t)dW_t \\
        &= \mu(X_t,t)dt + \sigma(X_t,t)(dW_t - h(X_t,t)dt + h(X_t,t)dt) \\
        &= (\mu(X_t,t) + \sigma(X_t,t)h(X_t,t))dt + \sigma(X_t,t)(dW_t - h(X_t,t)dt) \\
        &= \mu'(X_t,t)dt + \sigma'(X_t,t)dW'_t.
    \end{align*}
\end{proof}

A subclass of particular interest among Girsanov-type changes of measure, which will be a fundamental tool in the remainder of this paper, is given by Doob transformations (and their extensions).

\begin{definizione}\label{def doob e qd}
    Let \( \phi\) be a strictly positive function. The stochastic transformation of a diffusion process \( X_t\) that produces a new process with infinitesimal generator given by
    \begin{equation}
        L_t^{\phi} := \phi^{-1}L_t \phi - \phi^{-1}(L_t\phi),
    \end{equation}
    is called a quasi-Doob transformation characterized by the function \(\phi\). In this expression, \( \phi^{-1} L_t \phi\) must be understood as the composition of three different operations: the multiplication operator by \(\phi^{-1}\), the operator \( L_t\) itself, and the multiplication operator by \(\phi\). That is, for every sufficiently smooth function \( g\),
    \[ L_t^\phi g = \phi^{-1} L_t(\phi g) - \phi^{-1} (L_t\phi)g.\]
    This transformation also involves a change of the underlying probability measure, passing from an original \( \mathbb{P} \) to a measure \( \mathbb{Q} \) whose Radon-Nikodym derivative is given by
    \[ \left.\frac{d\mathbb{Q}}{d\mathbb{P}}\right|_{\mathcal{F}_t} = \phi^{-1}(X_0,0) \exp\left(-\int_0^t \phi^{-1}L_t\phi (X_s,s)ds\right)\phi(X_t,t).\]
    If, in addition, \(\phi\) is a harmonic function for \( L_t\), that is,
    \[ L_t\phi = 0, \]
    then the transformation is called a Doob-type transformation (or simply, a Doob transformation). In this case, of course, the transformed generator reduces to
    \[ L_t^\phi = \phi^{-1}L_t\phi, \]
    and the Radon-Nikodym derivative to
    \[ \left.\frac{d\mathbb{Q}}{d\mathbb{P}}\right|_{\mathcal{F}_t} = \phi^{-1}(X_0,0)\phi(X_t,t).\]
\end{definizione}

It is possible to view both quasi-Doob and Doob transformations as particular cases of the Girsanov-type change of measure introduced in Definition \ref{trasformazione di girsanov}, by introducing a function \(k \coloneqq \ln\phi\). With this notation, we can rewrite the previous definition by stating that a quasi-Doob transformation related to the function \(k \) transforms the infinitesimal generator \( L_t\) to
\[ L_t^k = e^{-k}L_te^{k} - e^{-k}(L_te^k), \]
and the probability measure from \(\mathbb{P}\) to \( \mathbb{Q}\) with
\begin{equation}\label{dQdP qd} 
    \left.\frac{d\mathbb{Q}}{d\mathbb{P}}\right|_{\mathcal{F}_t} = \exp{({{k}(X_t,t)-k(X_0,0)})}\exp\left(-\int_0^t e^{-k(X_s,s)}L_t(e^{k})(X_s,s)ds\right).
\end{equation}
On the other hand, if in addition
\( L_t(e^{k}) = 0, \)
a Doob transformation characterized by \(k \) transforms the infinitesimal generator \( L_t\) to
\[ L_t^k = e^{-k}L_te^{k}, \]
and the original probability \(\mathbb{P}\) to \(\mathbb{Q}\) with
\begin{equation}\label{dQdP D}
    \left.\frac{d\mathbb{Q}}{d\mathbb{P}}\right|_{\mathcal{F}_t} = \exp{({{k}(X_t,t)-k(X_0,0)})}.
\end{equation}

By comparing \eqref{dQdP qd} and \eqref{dQdP D} with Definition \ref{trasformazione di girsanov}, it is straightforward to verify that Doob and quasi-Doob transformations are particular Girsanov-type changes of measure, where the exponent of the Radon-Nikodym derivative admits a more explicit expression related to the function \( k \). Indeed, we have the following characterization:

\begin{proposizione}\label{caratterizzazione finita d e qd}
    Let \( h\) be a Girsanov-type change of measure for \( \mathrm{SDE}_{\mu,\sigma} \) in the sense of Definition \ref{trasformazione di girsanov}. Then:
    \begin{itemize}
        \item \( h\) is a Doob transformation if and only if there exists a function \(\ k \) such that
        \[ h = \sigma k_x \quad \text{and} \quad L_t(k) = -\frac{1}{2}h^2 \quad (\text{or, equivalently, } L_t(e^k)=0).\]
        \item \( h\) is a quasi-Doob transformation if and only if there exists a function \( k \) such that
        \[ h = \sigma k_x \quad \text{and} \quad L_t( k ) + \frac{1}{2}h^2 = e^{-k}L_t(e^{k}).\]
    \end{itemize}
\end{proposizione}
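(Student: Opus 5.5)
The argument rests on one application of It\^o's formula. We compare the Girsanov density $Z_t$ of Definition \ref{trasformazione di girsanov} with the densities \eqref{dQdP qd} and \eqref{dQdP D}, using that two stochastic exponentials coincide exactly when their logarithms coincide. For a smooth $k$ and a solution $(X,W)$ of $\mathrm{SDE}_{\mu,\sigma}$, It\^o's formula gives
\begin{equation*}
k(X_t,t)-k(X_0,0)=\int_0^t \sigma k_x(X_s,s)\,dW_s+\int_0^t L_t(k)(X_s,s)\,ds ,
\end{equation*}
where $L_t$ is the space-time generator \eqref{generator}. We also use the elementary identity
\begin{equation*}
e^{-k}L_t(e^k)=L_t(k)+\tfrac12\sigma^2k_x^2 .
\end{equation*}

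\emph{Sufficiency (quasi-Doob case).} Suppose $h=\sigma k_x$ and $L_t(k)+\frac12 h^2=e^{-k}L_t(e^k)$. Substituting the It\^o expansion into \eqref{dQdP qd}, its exponent becomes
\begin{equation*}
\int_0^t h\,dW_s+\int_0^t\Bigl(L_t(k)-e^{-k}L_t(e^k)\Bigr)ds=\int_0^t h\,dW_s-\tfrac12\int_0^t h^2\,ds .
\end{equation*}
This is exactly the exponent of $Z_t$, so the two densities agree. At the level of generators, $L^k_t$ is computed directly:
\begin{equation*}
L_t^k g=L_tg+\sigma^2k_x g_x=L_tg+\sigma h\,g_x .
\end{equation*}
This is the generator of $\mathrm{SDE}_{\mu+\sigma h,\sigma}$, in agreement with Theorem \ref{teo trasf girsanov}.

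\emph{Sufficiency (Doob case).} Here $L_t(e^k)=0$. By the identity above this is equivalent to $L_t(k)=-\frac12\sigma^2k_x^2=-\frac12 h^2$. The time integral in \eqref{dQdP qd} then vanishes, and the density reduces to \eqref{dQdP D}.

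\emph{Necessity.} Suppose $h$ defines a (quasi-)Doob transformation with function $k$. Then $Z_t$ equals \eqref{dQdP qd} (respectively \eqref{dQdP D}). Taking logarithms and using the It\^o expansion of $k$, we obtain for all $t$, almost surely,
\begin{equation*}
\int_0^t (h-\sigma k_x)(X_s,s)\,dW_s=\int_0^t\Bigl(\tfrac12 h^2+L_t(k)-e^{-k}L_t(e^k)\Bigr)(X_s,s)\,ds .
\end{equation*}
The left side is a continuous local martingale and the right side has finite variation, so both vanish. The quadratic variation of the left side is $\int_0^t (h-\sigma k_x)^2\,ds=0$. Therefore $h=\sigma k_x$ along the paths, and the drift identity holds there as well.

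\emph{Main obstacle.} The delicate step is passing from ``along the paths of $X$'' to ``as functions.'' I would handle it by arguing that, because $\sigma>0$ (Assumption A), the solution started from arbitrary $(x,t)$ charges every neighbourhood. Continuity of the integrands then upgrades the almost-sure identities to pointwise identities on $I\times[0,T]$. Alternatively, one can simply read the statement as identities of coefficients, with the generator comparison $L^k_t=L_t+\sigma h\,\partial_x$ yielding $h=\sigma k_x$ directly. In the Doob case the equivalence $L_t(k)=-\frac12h^2 \iff L_t(e^k)=0$ again follows from the identity $e^{-k}L_t(e^k)=L_t(k)+\frac12\sigma^2k_x^2$.
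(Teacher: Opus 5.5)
Your proposal is correct and follows essentially the same route as the paper: It\^o's formula applied to $k$, identification of the exponent of the Girsanov density with that of \eqref{dQdP qd} or \eqref{dQdP D}, uniqueness of the semimartingale decomposition for necessity, and the direct computations $L^k_t g = L_t g + \sigma^2 k_x g_x$ and of the density for sufficiency. Your refinements are all sound: treating the quasi-Doob case uniformly through $e^{-k}L_t(e^k)=L_t(k)+\tfrac12\sigma^2k_x^2$ (which also shows that the second quasi-Doob condition is automatic once $h=\sigma k_x$), replacing the paper's appeal to the martingale representation theorem by the quadratic-variation argument (which does not need a Brownian filtration), and flagging the passage from pathwise to pointwise identities, which the paper leaves implicit.
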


\begin{proof}
    The proof can be (partially) found in \cite{paperriduzione} for the quasi-Doob case and in \cite{paper2020} for the Doob one. We repeat the proof for the first assertion for completeness, adapting it to the one-dimensional setting in which we are working. The proof for the quasi-Doob transformation is an easy generalization.
    
    Let \( h\) be a Doob transformation. Then, from Definition \ref{def doob e qd} and by setting \( k=\ln\phi\), there exists a function \(k\) such that
    \begin{equation}\label{densità doob proof}
        \left.\frac{d\mathbb{Q}}{d\mathbb{P}}\right|_{\mathcal{F}_t} = \exp\left(k (X_t,t)- k(X_0,0)\right).
    \end{equation}
    On the other hand, from Definition \ref{trasformazione di girsanov}, since \( h\) is by hypothesis a Girsanov-type change of measure, we have that
    \begin{equation}\label{densità girs proof}
        \left.\frac{d\mathbb{Q}}{d\mathbb{P}}\right|_{\mathcal{F}_t} = \exp\left(\int_0^t h(X_s,s)dW_s - \frac{1}{2}\int_0^t h^2(X_s,s)ds\right).
    \end{equation}
    By equating \eqref{densità doob proof} and \eqref{densità girs proof}, we obtain
    \[ \int_0^t h(X_s,s)dW_s - \frac{1}{2}\int_0^t h^2(X_s,s)ds =  k (X_t,t)-k(X_0,0). \]
    However, by applying Itô's formula to the function \( k \), we get
    \[  k (X_t,t)- k (X_0,0) = \int_0^t L_t k (X_s,s)ds + \int_0^t \sigma  k_x (X_s,s) dW_s. \]
    Thus, the first implication follows by the uniqueness of the canonical semimartingale decomposition for continuous processes and by the uniqueness in the martingale representation theorem for processes adapted to Brownian filtrations (see, e.g., Theorem 36.1 in \cite{rogers}).
    
    Let us now assume that \( h\) is a Girsanov-type change of measure (in the sense of Definition \ref{trasformazione di girsanov}) and that there exists a function \( k \) such that \( h=\sigma  k_x\) and \( L_t(k) = -\frac{1}{2} h^2.\) We want to prove that \( h\) is actually a Doob transformation characterized by \( k \) itself. Theorem \ref{teo trasf girsanov} ensures that under the transformation given by \( h\), the diffusion \( X\) satisfies a new SDE with drift
    \[ \mu' = \mu + \sigma h \]
    and the same diffusion coefficient \(\sigma\). Thus, in particular, the transformed infinitesimal generator is given by
    \[ \bar{L}_t = L_t + \sigma h \partial_x = L_t + \sigma^2 k_x \partial_x. \]
    By setting \( \phi = e^{ k }\),
    \[ \bar{L}_t = L_t + \sigma^2\frac{\phi_x}{\phi}\partial_x. \]
    Now, we show that \( \bar{L}_t\) coincides with \(L_t^\phi\) as in Definition \ref{def doob e qd}. Indeed, for every sufficiently smooth function \( g\),
    \[ \bar{L}_tg = L_tg + \sigma^2\frac{\phi_x}{\phi}\partial_x g, \]
    while by construction
    \[ L_t^\phi g = \phi^{-1} L_t(\phi g) = \phi^{-1} \bcancel{(L_t\phi)}g + \bcancel{\phi^{-1}\phi}L_tg + \sigma^2\frac{\phi_x}{\phi}\partial_xg = L_tg + \sigma^2\frac{\phi_x}{\phi}\partial_xg, \]
    since \( \phi=e^{ k }\) and the hypothesis \( L_t(k ) = -\frac{1}{2} h^2\) is equivalent to requiring that \( L_t(e^k) = 0\). Thus, \( \bar{L}_t = L_t^\phi\) and the first requirement of Definition \ref{def doob e qd} regarding Doob transformations is fulfilled. Now, we have to prove that 
    \[ \left.\frac{d\mathbb{Q}}{d\mathbb{P}}\right|_{\mathcal{F}_t} = \phi^{-1}(X_0,0)\phi(X_t,t). \]
    Since \( h\) is a Girsanov-type change of measure,
    and by hypothesis \( h = \sigma k_x\), we get
    \[ \frac{d\mathbb{Q}}{d\mathbb{P}} = \exp\left(\int_0^t \sigma k_x(X_s,s)dW_s - \frac{1}{2}\int_0^t h^2(X_s,s)ds\right). \]
    By Itô's formula,
    \[ \int_0^t \sigma  k_x(X_s,s)dW_s =  k(X_t,t) -  k (X_0,0) - \int_0^t L_tk(X_s,s)ds, \]
    and since by hypothesis \( L_t k  + \frac{1}{2}h^2 = 0\),
    we obtain
    \[  \frac{d\mathbb{Q}}{d\mathbb{P}} = \exp\left( k (X_t,t) - k(X_0,0)\right), \]
    and the thesis follows by replacing \( k  = \ln\phi\).
\end{proof}

\begin{osservazione}
     It is important to note that the fact that a change of measure \(h\) is a
Doob transformation strongly depends on \( SDE_{\mu,\sigma}\). Indeed, in Proposition \ref{caratterizzazione finita d e qd} , the right-hand
side of equation \( h=\sigma k_x\) depends explicitly on \(\sigma\) and the left hand side of equation \( L_t(k)=-\frac{1}{2}h^2\) depends, through the operator \( L_t\), on both \(\mu\) and \(\sigma\). Thus, in the following, when we refer to a Doob transformation, we will always implicitly assume that it is of Doob-type with respect to the original $SDE$ (and not the transformed one).
\end{osservazione}

\begin{osservazione}
    It is immediate to verify that in one dimension, under the hypothesis that the diffusion coefficient \(\sigma\) does not vanish (which is the assumption we made in Section \ref{sezione lamperti} for the well-posedness of the Lamperti map), any Girsanov-type change of measure is a quasi-Doob transformation, since it is always possible to integrate and find a function \(k \) such that \(h = \sigma k_x\). Of course, this does not hold in higher dimensions, where it is possible to define Girsanov-type changes of measure which are neither of Doob type nor quasi-Doob. However, as clarified in the previous section, in the current paper we will always focus on dimension one.
\end{osservazione}

\begin{osservazione}

Historically, the original problem considered by Doob, which led to the introduction of Doob transformations via \( L^\phi \) (see Definition \ref{def doob e qd}), was to condition a Markov process \( X_t \) started at \( X_0 \) to exit a domain \( D \) exclusively through a subset \( B \) of its boundary \( \partial D = B \cup C \), where \( B \cap C = \emptyset \). Let \( \tau_B \) and \( \tau_C \) denote the respective hitting times of \( B \) and \( C \):
\[
    \tau_B = \inf\{t \geq 0 : X_t \in B\}, \quad \tau_C = \inf\{t \geq 0 : X_t \in C\}.
\]
We define the conditioning event as \( \mathcal{B} = \{\tau_B \leq \tau_C\} \). Let \( \mathbb{P} \) be the path measure of the unconditioned Markov process \( X \), and let \( \mathbb{Q} = \mathbb{P}(\cdot \mid \mathcal{B}) \) be the path measure of the conditioned process. From the fundamental properties of conditional probability, the Radon-Nikodym derivative of \( \mathbb{Q} \) with respect to \( \mathbb{P} \) restricted to the \(\sigma\)-algebra \( \mathcal{F}_T = \sigma(X_t, \, 0 \leq t \leq T) \) is given by:
\[
    \left. \frac{d\mathbb{Q}}{d\mathbb{P}} \right|_{\mathcal{F}_T} = \frac{\mathbb{P}(\mathcal{B} \mid \mathcal{F}_T)}{\mathbb{P}(\mathcal{B})}.
\]
Indeed, for every event \( E \in \mathcal{F}_T \), the tower property of conditional expectation yields:
\[
    \mathbb{Q}(E) = \mathbb{P}(E \mid \mathcal{B}) = \frac{\mathbb{P}(E \cap \mathcal{B})}{\mathbb{P}(\mathcal{B})} = \frac{\mathbb{E}[\mathbf{1}_E \mathbf{1}_{\mathcal{B}}]}{\mathbb{P}(\mathcal{B})} = \frac{\mathbb{E}\big[\mathbb{E}[\mathbf{1}_E \mathbf{1}_{\mathcal{B}} \mid \mathcal{F}_T]\big]}{\mathbb{P}(\mathcal{B})} = \frac{\mathbb{E}\big[\mathbf{1}_E \mathbb{P}(\mathcal{B} \mid \mathcal{F}_T)\big]}{\mathbb{P}(\mathcal{B})} = \int_E \frac{\mathbb{P}(\mathcal{B} \mid \mathcal{F}_T)}{\mathbb{P}(\mathcal{B})} \, d\mathbb{P}.
\]
Let us define the probability of successful exit as \( \phi(x) = \mathbb{P}_x(\tau_B \leq \tau_C) \), and let \( \tau = \min(\tau_B, \tau_C) \). Since \( X_t \) starts at \( X_0 \), the unconditional probability is \( \mathbb{P}(\mathcal{B}) = \phi(X_0) \). By the Markov property, the conditional probability simplifies to \( \mathbb{P}(\mathcal{B} \mid \mathcal{F}_T) = \phi(X_{\min(T, \tau)}) \).

Consequently, for \( T \leq \tau \), the Radon-Nikodym derivative becomes:
\[
    \left. \frac{d\mathbb{Q}}{d\mathbb{P}} \right|_{\mathcal{F}_T} = \frac{\phi(X_T)}{\phi(X_0)}.
\]
This matches the form required by Definition \ref{def doob e qd}, provided that \( \phi \) is a harmonic function for the generator \( L_t \). The harmonicity of \( \phi \) (i.e., \( L_t\phi = 0 \)) follows directly from Dynkin's formula:
\[
    \mathbb{E}_x[\phi(X_\tau)] = \phi(x) + \mathbb{E}_x \left[ \int_0^\tau L_t\phi(X_s) \, ds \right],
\]
combined with the boundary property that \( \phi(x) = \mathbb{E}_x[\phi(X_\tau)] \).

A comprehensive discussion on the connection between Doob transformations and conditioned Markov processes is carried out in \cite{chetrite2015nonequilibrium}. This procedure rigorously yields well-known stochastic processes, such as, for example, the three-dimensional Bessel process (which exhibits a drift of \( 1/x \))  (see Remark \ref{Bessel condizionamento} in Section \ref{bessel processes}).

\end{osservazione}

\subsection{General stochastic transformations}\label{section stochastic transformations}
A stochastic transformation is a mapping that acts both on the solution process of a given SDE (via an operator $P_T$, mapping the pair $(X,W)$ into a new pair $(P_T(X),P_T(W))$) and on the equation itself (via an operator $E_T$, transforming the coefficients $(\mu,\sigma)$ into $(E_T(\mu),E_T(\sigma))$). The fundamental concept is structural consistency: we require that if $(X,W)$ solves the equation $\text{SDE}_{\mu,\sigma}$, then the transformed process $(P_T(X),P_T(W))$ must be a solution to the transformed equation $\text{SDE}_{E_T(\mu),E_T(\sigma)}$, possibly with respect to a new filtered probability space $(\Omega, \mathcal{F}, (\mathcal{F}'_t)_{t \ge 0}, \mathbb{Q})$. Furthermore, for the stochastic transformation to be \textit{admissible}, we require that the transformed process $P_T(W)$ remains a standard Brownian motion with respect to this new filtered probability space.

Examples of such stochastic transformations include absolutely continuous time changes $f:\mathbb{R}_+\rightarrow \mathbb{R}_+$, defined as
\begin{equation}\label{f(t)}
    t \mapsto f(t) \coloneq \int_0^t \eta(s) \, ds, \quad \eta>0;
\end{equation}
space-time functions $\Phi:\mathbb{R}\times \mathbb{R}_+\rightarrow\mathbb{R}$, defined as
\begin{equation}\label{Phi}
    x \mapsto x' \coloneq \Phi(x,t);
\end{equation}
and Girsanov-type changes of the probability measure $\mathbb{P}\mapsto \mathbb{Q}$, characterized by a non-explosive function
\begin{equation}\label{h}
    h: \mathbb{R}\times \mathbb{R}_+\rightarrow\mathbb{R}
\end{equation}
appearing in the Doléans-Dade exponential inside the Radon-Nikodym derivative (see Definition \ref{trasformazione di girsanov}). We can unify all these examples into a general definition, where the global action of a generic stochastic transformation encompasses all these individual effects:

\begin{definizione}\label{def trasf stoc}
    Let $\Phi: \mathbb{R}\times \mathbb{R}_+ \rightarrow \mathbb{R}$, $f: \mathbb{R}_+\rightarrow \mathbb{R}_+$, and $h: \mathbb{R}\times \mathbb{R}_+ \rightarrow \mathbb{R}$ be given as in \eqref{Phi}, \eqref{f(t)}, and \eqref{h} such that
    \begin{equation}\label{invertibility}\Phi_x(x,f(t))\cdot f'(t)\neq 0,\end{equation} and let $(X,W)$ be a solution to $\text{SDE}_{\mu,\sigma}$. Introducing the function \(\Phi^f:\mathbb{R}\times \mathbb{R}_+\rightarrow\mathbb{R} \) given by
    \begin{equation}\label{Phif}
        \Phi^f(x,t)=\Phi(x,f(t)),
    \end{equation} let $\tilde{\Phi}: \mathbb{R}\times \mathbb{R}_+\rightarrow \mathbb{R}\times \mathbb{R}_+$ be a (local) diffeomorphism defined as
    \begin{equation}\label{Phitilde} \tilde{\Phi}(x,t) \coloneq (\Phi^f(x,t),f(t))=(\Phi(x,f(t)),f(t)). \end{equation}
    A stochastic transformation is a pair $T=(\tilde{\Phi},h)$ which maps \( (X,W) \mapsto (P_T(X),P_T(W)),\) where
    \begin{equation}\label{PTX}
        P_T(X)_t \coloneq \Phi(X_{f^{-1}(t)}, t), \quad P_T(W)_t \coloneq \int_0^{f^{-1}(t)} \sqrt{f'(s)} \, (dW_s - h(X_s,s) \, ds)
    \end{equation}
    and its action on the SDE coefficients is given by \( (\mu,\sigma) \mapsto (E_T(\mu),E_T(\sigma))  \), where
    \begin{equation}\label{ET}
        E_T(\mu) \coloneq \Big( \frac{1}{f'} [ L_t(\Phi^f) + \Phi^f_x \sigma  h] \Big) \circ \tilde{\Phi}^{-1}, \quad E_T(\sigma) \coloneq \Big(\frac{1}{\sqrt{f'}} \Phi^f_x \sigma \Big) \circ \tilde{\Phi}^{-1}.
    \end{equation}
\end{definizione}

\begin{osservazione}
Condition \eqref{invertibility} ensures the invertibility of \(\tilde{\Phi}\).
    Since by construction $f'(t)=\eta(t)>0$, it is sufficient to impose that $\Phi^f_x \neq 0$.
\end{osservazione}

Structural consistency and admissibility are guaranteed by the following theorem.

\begin{teorema}[General action of a stochastic transformation]\label{trasformate}
    Let $(X,W)$ be a solution to the non-explosive $\text{SDE}_{\mu,\sigma}$ in the filtered probability space $(\Omega, \mathcal{F}, (\mathcal{F}_t)_{t \ge 0}, \mathbb{P})$. Let $T=(\tilde{\Phi}, h)$ be a stochastic transformation, with \( h \) non-explosive for the SDE (according to Definition \ref{def_non_explosive}). Then, the transformed process $P_T(X,W) = (P_T(X),P_T(W))$ defined in \eqref{PTX} is a solution to the transformed equation $E_T(\text{SDE}_{\mu,\sigma}) = \text{SDE}_{E_T(\mu,\sigma)}=\text{SDE}_{E_T(\mu),E_T(\sigma)}$, where $E_T(\mu)$ and $E_T(\sigma)$ are defined in \eqref{ET}, with $P_{T}(W)$ being a Brownian motion in the space $( \Omega, \mathcal{F}', \mathbb{Q})$, where
    \begin{equation*}
        \frac{d \mathbb{Q}}{d \mathbb{P}} \bigg|_{\mathcal{F}_t} = \exp \Big( \int_0^t h(X_s,s) \, dW_s - \frac{1}{2} \int_0^t h^2(X_s,s) \, ds \Big), \quad \mathcal{F}'_t = \mathcal{F}_{f^{-1}(t)}.
    \end{equation*}
\end{teorema}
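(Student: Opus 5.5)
The plan is to decompose $T$ into three elementary steps, applied in the order fixed by \eqref{PTX}: first the measure change $h$, then the space map $\Phi^f$, and finally the deterministic time change $f$.

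\textbf{Step 1: measure change.} By Theorem \ref{teo trasf girsanov}, which applies because $h$ is non-explosive, $(X,W')$ solves $\mathrm{SDE}_{\mu+\sigma h,\sigma}$ under $\mathbb{Q}$. Here $W'_t=W_t-\int_0^t h(X_s,s)\,ds$ is a $\mathbb{Q}$-Brownian motion with respect to $(\mathcal F_t)$, and $d\mathbb{Q}/d\mathbb{P}|_{\mathcal F_t}$ is exactly the stated Doléans-Dade exponential.

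\textbf{Step 2: space map.} Under $\mathbb{Q}$, set $Y_s:=\Phi^f(X_s,s)=\Phi(X_s,f(s))$ and apply Itô's formula with the $\mathbb{Q}$-dynamics of $X$:
\[
dY_s=\big(L_s\Phi^f+\Phi^f_x\sigma h\big)(X_s,s)\,ds+\big(\Phi^f_x\sigma\big)(X_s,s)\,dW'_s .
\]
Here $L_s$ is the generator \eqref{generator} of the original equation. Note that $\partial_s\Phi^f$ already contains the factor $f'(s)\Phi_t$, which is why it is natural to work with $\Phi^f$ rather than $\Phi$. The coefficients are written in terms of the $\mathbb{P}$-generator, and the extra $\Phi^f_x\sigma h$ term comes from the change of drift.

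\textbf{Step 3: time change.} Since $P_T(X)_t=\Phi(X_{f^{-1}(t)},t)=Y_{f^{-1}(t)}$, I will substitute $s=f^{-1}(u)$ in the integral form of $Y$. The drift integral $\int_0^{f^{-1}(t)} a(X_s,s)\,ds$ becomes $\int_0^t \frac{a}{f'}\big(X_{f^{-1}(u)},f^{-1}(u)\big)\,du$.

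For the stochastic integral, let $B_t:=P_T(W)_t=\int_0^{f^{-1}(t)}\sqrt{f'(s)}\,dW'_s$. I will verify three facts:
\begin{itemize}
\item $B$ is a continuous $(\mathcal F_{f^{-1}(t)})$-local martingale under $\mathbb{Q}$, using optional stopping at the deterministic times $f^{-1}(t)$;
\item its quadratic variation is $\int_0^{f^{-1}(t)} f'(s)\,ds=t$, so by Lévy's characterization $B$ is a $\mathbb{Q}$-Brownian motion for the filtration $\mathcal F'_t=\mathcal F_{f^{-1}(t)}$;
\item $\int_0^{f^{-1}(t)} b\,dW'_s=\int_0^t \frac{b}{\sqrt{f'}}\big(X_{f^{-1}(u)},f^{-1}(u)\big)\,dB_u$, with $b=\Phi^f_x\sigma$. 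This is the standard time-change formula for stochastic integrals, proved first for simple integrands and then extended by the isometry.
\end{itemize}

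Finally, express the integrands as functions of $(P_T(X)_u,u)$. The relation $(X_{f^{-1}(u)},f^{-1}(u))=\tilde\Phi^{-1}(P_T(X)_u,u)$ holds because $\tilde\Phi(x,s)=(\Phi^f(x,s),f(s))$ is invertible by \eqref{invertibility}. Composing with $\tilde\Phi^{-1}$ then yields exactly $E_T(\mu)$ and $E_T(\sigma)$ from \eqref{ET}. The integrability conditions of Definition \ref{weak and strong solutions} transfer through the same change of variables.

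I expect the main obstacle to be the time change in Step 3. Two points need care: the filtration $\mathcal F'_t$ must satisfy the usual conditions, and the identity for the stochastic integral must hold under $\mathbb{Q}$. A related subtlety is that the time-changed process lives on the interval $[0,f(T)]$, so $\mathbb{Q}$ has to be consistently defined on $\mathcal F_{f^{-1}(t)}$. This is guaranteed because the density process is a true martingale by Lemma \ref{lemma non expl}. The remaining steps are direct computations.
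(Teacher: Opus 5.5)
Your argument is correct. The paper gives no proof of its own and only refers to \cite{paper2020}; your three steps are the standard route to this result. They are: Theorem \ref{teo trasf girsanov} for the measure change, It\^{o}'s formula for $\Phi^f$ under $\mathbb{Q}$, and the deterministic time change $s=f^{-1}(u)$ of the It\^{o} integral combined with L\'evy's characterization of $P_T(W)$ on $\mathcal{F}'_t=\mathcal{F}_{f^{-1}(t)}$. Your identification of the coefficients with $E_T(\mu)$ and $E_T(\sigma)$, through $(X_{f^{-1}(u)},f^{-1}(u))=\tilde{\Phi}^{-1}(P_T(X)_u,u)$, also checks out.
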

\begin{proof}
    With slightly different notations the proof can be found in \cite{paper2020}.
\end{proof}

\begin{osservazione}
    The Lamperti and Doob-Girsanov transformations, studied in Section \ref{sezione lamperti} and Section \ref{section_3}, are special cases of stochastic transformations obtained by setting $T=(\tilde{\Phi},0)$ with \( f=\mathrm{id}\) and \(\Phi=F\) given by \eqref{Lamperti},  and $T=(\mathrm{Id}, h)$, respectively.
\end{osservazione}

\begin{definizione}
    A generic stochastic transformation \(T=(\tilde{\Phi},h) \) is of Doob-type if the change of measure \(\tilde{T}=(Id,h)\) is a Doob transformation, in the sense of Definition \ref{def doob e qd}.
\end{definizione}

\begin{osservazione}
    The non-explosiveness assumption is necessary if we are interested in the well-posedness of the definition of the transformation \( P_T\) on the set of solutions \((X,W)\) to the \(\text{SDE}_{\mu,\sigma}\). If we focus only on
\( \text{SDE}_{\mu,\sigma}\) and on the action \( E_T\)
of the stochastic transformation \(T\), we can define the transformed \(\text{SDE}_{E_T(\mu,\sigma)}\) without making any request on
non-explosiveness (see \cite{paper2020}). Henceforth, unless specified otherwise, the assumptions of non-explosiveness will be implicitly made whenever required.
\end{osservazione}

The set of stochastic transformations can be endowed with a group structure and a compatible differentiable manifold structure, defining a one-parameter group of transformations indexed in $\mathbb{R}$. The group structure, which will be needed in the following, is given by
\begin{itemize}
    \item \textbf{Composition:} Given \( T_1=(\tilde{\Phi}_1, h_1)\) and \( T_2=(\tilde{\Phi}_2,  h_2)\),
    \begin{equation}\label{T composizione}
    T_2\circ{T_1} = \Big(\tilde{\Phi}_2\circ\tilde{\Phi}_1,\;   h_1 + \sqrt{f'_1}(h_2\circ\tilde{\Phi}_1) \Big).
    \end{equation}
    \item \textbf{Inverse:} Given \( T=(\tilde{\Phi}, h) \),
    \begin{equation}\label{T inversa}
    T^{-1} = \Big(\tilde{\Phi}^{-1}, \;  -\left(\frac{h}{\sqrt{f'}} \right)\circ\tilde{\Phi}^{-1} \Big).
    \end{equation}
    \item \textbf{Neutral Element:} \( T_0=(\mathrm{Id}, 0) \).
\end{itemize}

\begin{osservazione}\label{composizione ben def}
    The group structure is consistent with the iteration of transformed processes and transformed coefficients. In particular, it can be proved (see \cite{proceeding},\cite{paper2015}) that
    \[ P_{T_2\circ T_1}(X,W)=P_{T_2}(P_{T_1}(X,W)) \quad and \quad E_{T_2\circ T_1}(\mu,\sigma)=E_{T_2}(E_{T_1}(\mu,\sigma)).\]
\end{osservazione}

In particular, one can consider a standard one-parameter group of stochastic transformations $\{T_\lambda\}_{\lambda \in \mathbb{R}}$ and its corresponding Lie algebra. As per the standard construction, the elements of this algebra are obtained by differentiating the group elements with respect to the flow parameter and evaluating the result at zero \footnote{Recall that a Lie algebra is defined as the tangent space at the identity.}. Shifting to the Lie algebra framework allows for the application of powerful linear algebraic tools.

\begin{definizione}
    Given a one-parameter group of stochastic transformations $T_\lambda=(\tilde{\Phi}_\lambda, h_\lambda)$, we define an infinitesimal stochastic transformation as an element $V=(\tilde{Y},H)$ of the associated Lie algebra, with \(\tilde{Y}=(Y,m)\), where:
    \[
    \begin{matrix}
    Y(x,t)=\partial_\lambda \Phi^{f_\lambda}_\lambda(x,t) \vert_{\lambda=0}, & m(t)=\partial_\lambda f_\lambda(t) \vert_{\lambda=0}, & H(x,t)=\partial_\lambda h_\lambda(x,t) \vert_{\lambda=0}.
    \end{matrix}
    \]
    Here, $\tilde{Y}=Y\partial_x+m\partial_t$ is a time-dependent vector field on $\mathbb{R}\times\mathbb{R}_+$ and $H: \mathbb{R} \times \mathbb{R}_+ \to \mathbb{R}$ is a  smooth function.
\end{definizione}

Conversely, any element of the Lie algebra allows for the recovery of the corresponding one-parameter group of transformations by reconstructing the flow according to the following result:

\begin{teorema}[Reconstruction of the flow]\label{teo ricostruzione del flusso}
Let $V = (\tilde{Y}, H)$ be an infinitesimal stochastic transformation. The corresponding one-parameter group $T_\lambda$ is uniquely determined by the solution to the system:
\begin{equation}\label{ricostruzione del flusso}
\begin{cases}
    \partial_{\lambda}\Phi^{f_\lambda}_{\lambda}(x,t) = Y(\tilde{\Phi}_{\lambda}(x,t)), & \Phi_0 = \mathrm{Id}_{\mathbb{R}}, \\
    \partial_{\lambda} f_{\lambda}(t) = m(f_{\lambda}(t)), & f_0 = \mathrm{id}_{\mathbb{R}}, \\
    \partial_{\lambda}h_{\lambda}(x,t) = \sqrt{f'_{\lambda}(t)} H(\tilde{\Phi}_{\lambda}(x,t)), & h_0 = 0.
\end{cases}
\end{equation}
Naturally, the first two equations of system \eqref{ricostruzione del flusso} can be cast into vector form as follows:
\[ \partial_\lambda\tilde{\Phi}_\lambda(x,t)=\tilde{Y}(\tilde{\Phi}_\lambda(x,t)), \quad \quad \tilde{\Phi}_0=\mathrm{Id}_{\mathbb{R}\times \mathbb{R}}.\]
\end{teorema}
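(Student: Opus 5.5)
The plan is to prove Theorem \ref{teo ricostruzione del flusso} by differentiating the group law \eqref{T composizione} along the one-parameter group. This yields a system of ordinary differential equations in $\lambda$ whose unique solvability gives uniqueness of $T_\lambda$. I treat the three components separately, in the order $f_\lambda$, $\tilde{\Phi}_\lambda$, $h_\lambda$, since each depends only on the previous ones.

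\textbf{Time and space components.} The group property $T_{\lambda+\epsilon}=T_\epsilon\circ T_\lambda$ together with \eqref{T composizione} gives $\tilde{\Phi}_{\lambda+\epsilon}=\tilde{\Phi}_\epsilon\circ\tilde{\Phi}_\lambda$, and in particular $f_{\lambda+\epsilon}=f_\epsilon\circ f_\lambda$.
\begin{itemize}
\item Differentiating in $\epsilon$ at $\epsilon=0$ and using $m=\partial_\epsilon f_\epsilon|_{0}$ gives $\partial_\lambda f_\lambda(t)=m(f_\lambda(t))$.
\item Doing the same for the first component of $\tilde{\Phi}$ gives $\partial_\lambda \Phi^{f_\lambda}_\lambda(x,t)=Y(\tilde{\Phi}_\lambda(x,t))$.
\item Together these are the vector equation $\partial_\lambda\tilde{\Phi}_\lambda=\tilde{Y}\circ\tilde{\Phi}_\lambda$ with $\tilde{\Phi}_0=\mathrm{Id}$.
\end{itemize}
So $\tilde{\Phi}_\lambda$ is the flow of the vector field $\tilde{Y}$. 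By Cauchy--Lipschitz (assuming smooth $\tilde Y$ and working locally in $\lambda$ where the flow exists), $\tilde{\Phi}_\lambda$ is uniquely determined. Because $m$ depends only on $t$, the $t$-component decouples, so $f_\lambda$ is the flow of $m\,\partial_t$ and is determined first.

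\textbf{Measure component.} Here $h$ composes with a twist, and this is the step I expect to be delicate: one must get the factor $\sqrt{f'}$ and the order of composition right. The decomposition $T_{\lambda+\epsilon}=T_\epsilon\circ T_\lambda$ with $T_1=T_\lambda$, $T_2=T_\epsilon$ gives
\[
h_{\lambda+\epsilon}=h_\lambda+\sqrt{f_\lambda'}\,(h_\epsilon\circ\tilde{\Phi}_\lambda).
\]
Differentiating at $\epsilon=0$, with $H=\partial_\epsilon h_\epsilon|_0$ and $h_0=0$, gives
\[
\partial_\lambda h_\lambda=\sqrt{f'_\lambda}\,H\circ\tilde{\Phi}_\lambda,
\]
which is the third equation. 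Since its right-hand side does not involve $h_\lambda$, the equation is a direct integration once $f_\lambda$ and $\tilde{\Phi}_\lambda$ are known:
\[
h_\lambda(x,t)=\int_0^\lambda \sqrt{f'_s(t)}\,H(\tilde{\Phi}_s(x,t))\,ds,
\]
so $h_\lambda$ is unique. One should check that the other ordering, $T_\lambda\circ T_\epsilon$, gives the same $h_\lambda$. This follows from commutativity within a one-parameter group; alternatively it can be verified directly by showing that the integral formula satisfies the composition law, using $f'_{\lambda+\epsilon}=(f'_\epsilon\circ f_\lambda)\,f'_\lambda$.

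\textbf{Converse.} Conversely, suppose $(\tilde{\Phi}_\lambda,h_\lambda)$ is defined by the system. I then verify that $T_\lambda=(\tilde{\Phi}_\lambda,h_\lambda)$ is a one-parameter group.
\begin{itemize}
\item For $\tilde{\Phi}_\lambda$, the group property is the standard flow property.
\item For $h_\lambda$, fix $\lambda$ and compare the two sides of $h_{\lambda+\epsilon}=h_\lambda+\sqrt{f'_\lambda}\,(h_\epsilon\circ\tilde{\Phi}_\lambda)$ as functions of $\epsilon$. Both vanish difference at $\epsilon=0$, and both satisfy the same ODE in $\epsilon$; this uses the chain rule $f'_{\lambda+\epsilon}=(f'_\epsilon\circ f_\lambda)\,f'_\lambda$.
\end{itemize}
Finally, the derivative at $\lambda=0$ returns $(\tilde{Y},H)$, which establishes the one-to-one correspondence.
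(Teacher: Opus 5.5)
Your proposal is correct: differentiating $T_{\lambda+\epsilon}=T_\epsilon\circ T_\lambda$ via \eqref{T composizione} at $\epsilon=0$ gives exactly the system. Uniqueness then follows from Cauchy--Lipschitz for $\tilde{\Phi}_\lambda$ and direct integration for $h_\lambda$. Your converse check using $f'_{\lambda+\epsilon}=(f'_\epsilon\circ f_\lambda)\,f'_\lambda$ is also sound; note that $f'_\lambda=\exp\bigl(\int_0^\lambda m'(f_s(t))\,ds\bigr)>0$, which makes the square root well defined. The paper gives no argument of its own and only refers to \cite{paper2020}, so there is nothing to contrast; differentiating the group law is the standard route to a flow-reconstruction statement of this kind.
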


\begin{proof}
See \cite{paper2020}.
\end{proof}

Finally, given a finite stochastic transformation $T=(\tilde{\Phi}, h)$ and an infinitesimal transformation $V=(\tilde{Y},H)$ with its associated one-parameter group $T_\lambda$, the \emph{push-forward} of $V$ through $T$ is the infinitesimal transformation related to the finite one-parameter group $\ T\circ T_{\lambda} \circ T^{-1},$ and it is explicitly given by (see \cite{paper2015}):
\begin{equation}\label{pushforward}
T_*(V) = \left( (D(\tilde{\Phi})\tilde{Y})\circ\tilde{\Phi}^{-1},\left( \frac{\frac{m'}{2}h + H + \tilde{Y}(h)}{\sqrt{f'}} \right) \circ \tilde{\Phi}^{-1} \right),
\end{equation}
where \( D\) denotes, as usual, the Jacobian matrix, 
\[ (D(\tilde{\Phi})\tilde{Y})\circ\tilde{\Phi}^{-1}= \left((\Phi_x Y + \Phi_t f' m) \circ \tilde{\Phi}^{-1}, (f' m) \circ f^{-1}\right)\]
and
\[ \tilde{Y}(h)=Yh_x+mh_t.\]
\\
\\
Among the infinitesimal transformations, a subclass emerges as particularly interesting for the development of the present paper.
\begin{definizione}\label{def doob inf}
    An infinitesimal stochastic transformation $V=(\tilde{Y}, H)$ is said to be of Doob-type (respectively, quasi-Doob-type) if the corresponding one-parameter group of finite stochastic transformations $T_\lambda=(\tilde{\Phi}_\lambda,  h_\lambda)$, obtained by solving \eqref{ricostruzione del flusso}, is a finite Doob (respectively, quasi-Doob) transformation in the sense of Definition \ref{def doob e qd}.
\end{definizione}
We can provide a characterization of infinitesimal Doob and quasi-Doob transformations, as done in Proposition \ref{caratterizzazione finita d e qd} for the finite setting:
\begin{proposizione}\label{caratterizzazione doob e qd infin}
    An infinitesimal stochastic transformation \( V=(\tilde{Y},H)\) associated with the one-parameter group \( T_\lambda=(\tilde{\Phi}_\lambda, h_\lambda)\) is of Doob-type (with respect to   $SDE_{\mu,\sigma}$) if and only if there exist a function \( K(x,t)\) and a family of functions $K^{\lambda}(x,t)$ (depending smoothly on $\lambda$), satisfying the relation 
    \begin{equation}\label{def Klambda}K^\lambda = \int \frac{\sigma}{E_{T_\lambda}(\sigma)}K_x dx,\end{equation}
    such that
    \begin{itemize}
        \item[i)] \( H= \sigma K_x,\)
        \item[ii)] \( E_{T_\lambda}(L_t)[K^\lambda]=0,\)
    \end{itemize}
where \( E_{T_\lambda}(L_t)\) is the infinitesimal generator of the transformed SDE, namely
    \[ E_{T_\lambda}(L_t)=\partial_t+E_{T_\lambda}(\mu)\partial_x+\frac{1}{2} (E_{T_\lambda}(\sigma))^2\partial_{xx},\]
    with \( E_{T_\lambda}(\mu)\) and \( E_{T_\lambda}(\sigma)\) given according to Definition \ref{def trasf stoc}.\\
    \noindent
    Similarly, \( V\) is of quasi-Doob-type if and only if condition i) holds.
\end{proposizione}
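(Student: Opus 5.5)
The plan is to reduce the infinitesimal statement to the finite characterization of Proposition \ref{caratterizzazione finita d e qd}, applied along the one-parameter group $T_\lambda$. By Definition \ref{def doob inf}, $V$ is of Doob-type exactly when every $T_\lambda=(\tilde{\Phi}_\lambda,h_\lambda)$ is of Doob-type. By Proposition \ref{caratterizzazione finita d e qd}, this means that for every $\lambda$ there is a function $k_\lambda$ with
\[
h_\lambda=\sigma\,\partial_x k_\lambda,\qquad L_t(e^{k_\lambda})=0 .
\]
The idea is to differentiate these relations in $\lambda$ at $\lambda=0$, and conversely to integrate them back using the flow system \eqref{ricostruzione del flusso}.

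\emph{Necessity.} Since $h_0=0$, we may normalize $k_0=0$ and set $K:=\partial_\lambda k_\lambda|_{\lambda=0}$. Differentiating $h_\lambda=\sigma\partial_xk_\lambda$ at $\lambda=0$ and using $H=\partial_\lambda h_\lambda|_{\lambda=0}$ gives i), $H=\sigma K_x$.

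For ii), I would not linearize at $\lambda=0$. Instead I would use the group law \eqref{T composizione}, $T_{\lambda+\epsilon}=T_\epsilon\circ T_\lambda$, written in the form
\[
h_{\lambda+\epsilon}=h_\lambda+\sqrt{f_\lambda'}\,(h_\epsilon\circ\tilde{\Phi}_\lambda).
\]
Differentiating in $\epsilon$ at $\epsilon=0$ recovers the third equation of \eqref{ricostruzione del flusso}. Comparing with $h_\lambda=\sigma\partial_xk_\lambda$ shows that $\partial_\lambda k_\lambda$ is tied to $K$ transported by $\tilde{\Phi}_\lambda$, with the factor $\sigma/E_{T_\lambda}(\sigma)$ coming from \eqref{ET}. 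That is exactly how $K^\lambda$ in \eqref{def Klambda} arises: in the transformed coordinates, $E_{T_\lambda}(\sigma)\,K^\lambda_x=\sigma K_x$, which is the push-forward of $H$.

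Doob-ness of $T_{\lambda+\epsilon}$ relative to $\mathrm{SDE}_{\mu,\sigma}$ forces the increment $T_\epsilon$ to be infinitesimally Doob with respect to the transformed equation $\mathrm{SDE}_{E_{T_\lambda}(\mu,\sigma)}$, using Remark \ref{composizione ben def}. Linearizing $E_{T_\lambda}(L_t)(e^{\epsilon K^\lambda+o(\epsilon)})=0$ in $\epsilon$, and noting that the quadratic term in $K^\lambda_x$ is $O(\epsilon^2)$, gives $E_{T_\lambda}(L_t)[K^\lambda]=0$, which is ii).

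\emph{Sufficiency.} Given $K$ and $K^\lambda$ satisfying i)–ii), I would define $k_\lambda$ by integrating in $\lambda$: $\partial_\lambda k_\lambda$ equals $K^\lambda$ pulled back along $\tilde{\Phi}_\lambda$, with $k_0=0$. The third equation of \eqref{ricostruzione del flusso} together with i) then gives $h_\lambda=\sigma\partial_xk_\lambda$.

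To check $L_t(e^{k_\lambda})=0$, I would show that $\partial_\lambda\bigl[e^{-k_\lambda}L_t e^{k_\lambda}\bigr]$ is a combination of $E_{T_\lambda}(L_t)[K^\lambda]$, pulled back, and terms vanishing with $e^{-k_\lambda}L_te^{k_\lambda}$ itself. This yields a linear ODE in $\lambda$ with zero initial datum, so the quantity vanishes identically.

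\emph{Quasi-Doob case.} Here only i) is required. By the remark following Proposition \ref{caratterizzazione finita d e qd}, in one dimension with $\sigma\neq0$ every $h_\lambda$ admits a primitive $k_\lambda$, and i) at $\lambda=0$ is the only constraint that survives. The converse holds by the same integration of $k_\lambda$ as above.

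\emph{Main obstacle.} The hardest step is the bookkeeping in the change of coordinates. One has to verify that the conjugation identity $e^{-k}L e^{k}$ transforms under $\tilde{\Phi}_\lambda$ and the time change $f_\lambda$ exactly into $E_{T_\lambda}(L_t)$. The factors $f'_\lambda$ and $\sqrt{f'_\lambda}$ from \eqref{ET} and the group law must cancel consistently. I would first carry out the computation with $f=\mathrm{id}$, then insert the time change.
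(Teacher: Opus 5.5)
Your proposal is correct, and the sufficiency half is the paper's argument. You define $k_\lambda$ by $\partial_\lambda k_\lambda=K^\lambda\circ\tilde\Phi_\lambda$ with $k_0=0$. You then recover $h_\lambda=\sigma\partial_xk_\lambda$ from the third flow equation and i), and show that the Doob defect $D_\lambda:=e^{-k_\lambda}L_te^{k_\lambda}=L_t(k_\lambda)+\frac12h_\lambda^2$ vanishes. You can drop the hedge about ``terms vanishing with $D_\lambda$'' and the ODE argument. Two facts settle it:
\[
e^{-k}L_t(e^{k}g)-e^{-k}(L_te^k)g=(L_t+\sigma h\,\partial_x)g,\qquad (L_t+\sigma h_\lambda\partial_x)(G\circ\tilde\Phi_\lambda)=f'_\lambda\,E_{T_\lambda}(L_t)[G]\circ\tilde\Phi_\lambda .
\]
The second is a one-line chain-rule check, and it is the ``bookkeeping'' you flag as the main obstacle. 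Together they give exactly $\partial_\lambda D_\lambda=f'_\lambda\,E_{T_\lambda}(L_t)[K^\lambda]\circ\tilde\Phi_\lambda$, with no term in $D_\lambda$. So you just integrate in $\lambda$; this is what the paper's manipulation $L_t(k_\lambda)=\int L_t(K^\lambda\circ\tilde\Phi_\lambda)\,d\lambda=\dots$ does.

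Where you differ is necessity. The paper reads the same identity backwards: $D_\lambda\equiv0$ forces $\int f'_\lambda E_{T_\lambda}(L_t)[K^\lambda]\circ\tilde\Phi_\lambda\,d\lambda=0$, and differentiating in $\lambda$ gives ii). You instead use the group law together with a transitivity claim: if $T_\lambda$ and $T_{\lambda+\epsilon}$ are Doob for $\mathrm{SDE}_{\mu,\sigma}$, then the increment $T_\epsilon$ is Doob for $\mathrm{SDE}_{E_{T_\lambda}(\mu,\sigma)}$. You then linearize in $\epsilon$.

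That claim is true, but Remark \ref{composizione ben def} alone does not give it. You need that $e^{k_{\lambda+\epsilon}-k_\lambda}$ is harmonic for $e^{-k_\lambda}L_te^{k_\lambda}=L_t+\sigma h_\lambda\partial_x$, and then the conjugation identity above. Alternatively, argue probabilistically from the ratio of the two Doob densities plus the ``only if'' part of Proposition \ref{caratterizzazione finita d e qd}.

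Unwound, with $G_\epsilon=(k_{\lambda+\epsilon}-k_\lambda)\circ\tilde\Phi_\lambda^{-1}$, one has $f'_\lambda E_{T_\lambda}(L_t)[e^{G_\epsilon}]\circ\tilde\Phi_\lambda=e^{k_{\lambda+\epsilon}-k_\lambda}(D_{\lambda+\epsilon}-D_\lambda)$. So your linearization is the paper's identity in disguise. The paper's route is shorter, since one computation serves both directions. Yours makes visible that ii) says $V$ is, at every stage $\lambda$, an infinitesimal Doob transformation of the transformed equation with Doob function $K^\lambda$. That explains immediately why ii) collapses to $L_t(K)=0$ for symmetries.

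One detail to add in the necessity direction: define $K^\lambda:=(\partial_\lambda k_\lambda)\circ\tilde\Phi_\lambda^{-1}$ explicitly. \eqref{def Klambda} fixes $K^\lambda$ only up to a function of $(\lambda,t)$, and $\partial_\lambda k_\lambda=K^\lambda\circ\tilde\Phi_\lambda$ holds only for that choice.
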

\begin{proof}
Let \( T_\lambda=(\tilde{\Phi}_\lambda, h_\lambda)\) be the one-parameter group of finite transformations associated with \( V\) via Theorem \ref{teo ricostruzione del flusso}. By Proposition \ref{caratterizzazione finita d e qd}, it suffices to prove that there exists a function \( k_\lambda\) such that
\begin{itemize}
    \item[a)] \( h_\lambda=\sigma (k_\lambda)_x,\)
    \item[b)] \( L_t(k_\lambda)=-\frac{1}{2} h^2_\lambda\)
\end{itemize}
if and only if conditions \( i) \) and \( ii) \) hold.

Let us assume that \( a)\) and \( b)\) hold, and let 
\[ K= \partial_\lambda k_\lambda |_{\lambda=0}.\]
If we differentiate condition \( a)\) with respect to the flow parameter \( \lambda\) and evaluate the result at \( \lambda=0\), we obtain
\[ H=\sigma \partial_\lambda (k_\lambda)_x \Big|_{\lambda=0}= \sigma K_x,\]
and thus condition \( i)\) is satisfied. \\
 Next, we must prove condition \( ii)\). To do so, we first show that 
\[ k_\lambda=\int K^\lambda\circ \tilde{\Phi}_\lambda d\lambda.\]
Indeed, using \( a)\), the flow reconstruction \eqref{ricostruzione del flusso}, and the recently established condition \( i)\), we have:
\[k_\lambda=\int \frac{h_\lambda}{\sigma}dx=\int \int \frac{\sqrt{f'_\lambda}}{\sigma} H \circ \tilde{\Phi}_\lambda d\lambda \ dx= \]
\[=\int \int \frac{ \sqrt{f'_{\lambda}}}{\sigma} \left(\sigma\circ \tilde{\Phi}_\lambda \right)\left(K_x\circ \tilde{\Phi}_{\lambda}\right) dx \ d\lambda =\int\int (\Phi_\lambda)_x K^{\lambda}_x\circ \tilde{\Phi}_\lambda dx d\lambda =\int K^\lambda\circ \tilde{\Phi}_\lambda d\lambda.\]
Furthermore, from \( b)\), we obtain
\[ -\frac{1}{2} h_\lambda^2= L_t(k_\lambda)=\int L_t( K^\lambda\circ \tilde{\Phi}_\lambda) d\lambda= \int f'_\lambda  E_{T_\lambda}(L_t)[K^\lambda] \circ \tilde{\Phi}_\lambda d\lambda - \int \sigma^2 (k_\lambda)_x (K\circ \tilde{\Phi}_\lambda)_x d\lambda = \]
\[=\int f'_\lambda  E_{T_\lambda}(L_t)[K^\lambda] \circ \tilde{\Phi}_\lambda d\lambda - \frac{\sigma^2}{2}\int \partial_\lambda(\partial_x k_\lambda)^2 d\lambda=\int f'_\lambda  E_{T_\lambda}(L_t)[K^\lambda] \circ \tilde{\Phi}_\lambda d\lambda - \frac{\sigma^2}{2}(\partial_x k_\lambda)^2=\]
\[= \int f'_\lambda  E_{T_\lambda}(L_t)[K^\lambda] \circ \tilde{\Phi}_\lambda d\lambda - \frac{1}{2} h_\lambda^2 . \]
Thus, 
\[ \int f'_\lambda  E_{T_\lambda}(L_t)[K^\lambda] \circ \tilde{\Phi}_\lambda d\lambda =0,\]
which implies, after differentiating with respect to \(\lambda\) and bearing in mind that \( f'_\lambda >0\) by construction, 
\[  E_{T_\lambda}(L_t)[K^\lambda] =0,\]
meaning that condition \( ii)\) is also satisfied.\\
\\
Conversely, let us assume that conditions \( i)\) and \( ii)\) hold. First, we prove \( a)\). 
By construction, \( k_\lambda\) must satisfy
\[ k_\lambda= \int \frac{h_\lambda}{\sigma}dx,\]
hence condition \( a)\) is satisfied. 
Next, we prove \( b).\) Using the flow reconstruction \eqref{ricostruzione del flusso} and condition \( i)\), we have that
\[ k_\lambda= \int \int \frac{\sqrt{f'_\lambda}}{\sigma} H\circ \tilde{\Phi}_\lambda d\lambda dx = \int \int \frac{\sqrt{f'_\lambda}}{\sigma} \left(\sigma \circ \tilde{\Phi}_\lambda\right)\left( K_x \circ \tilde{\Phi}_\lambda\right) dx d\lambda = \int \int (K^\lambda \circ \tilde{\Phi}_\lambda)_x dx d\lambda =\int K^\lambda\circ \tilde{\Phi}_\lambda d\lambda.\]
 
 By hypothesis, condition \( ii)\) holds; thus, proceeding as we did for the first part of the proof, we obtain
\[ L_t(k_\lambda)=\int L_t(K^\lambda\circ \tilde{\Phi}_\lambda)d\lambda = \int f'_\lambda \underbrace{E_{T_\lambda}(L_t)[K^\lambda]}_{=0}\circ \tilde{\Phi}_\lambda d\lambda - \frac{1}{2}h_\lambda^2=-\frac{1}{2}h_\lambda^2,\]
which concludes the proof.

The proof for the quasi-Doob case is completely analogous to the argument used to establish the equivalence \( a) \Leftrightarrow i).\)
 
\end{proof}

\begin{osservazione}
  The function \( K^\lambda\) in \eqref{def Klambda} is defined up to a function \( c(\lambda,t)\), but the choice of \( c(\lambda,t)\) is uniquely determined by condition ii) of Proposition \ref{caratterizzazione doob e qd infin}, namely \( E_{T_\lambda}K^\lambda=0.\)
\end{osservazione}
\subsection{Remodeling map: from Brownian motion to SDEs}\label{sezione 1.3}

In this section we introduce the stochastic transformation that allows us to recover any SDE with constant-sign diffusion coefficient starting from the simplest one-dimensional SDE corresponding to the Brownian motion, denoted by $ SDE_{(0,1)}$, and given by
\begin{equation}\label{Brownian_motion_SDE}
 dX_t=0 dt+1 dW_t.   
\end{equation} 

In order to identify the suitable transformation, we will take advantage both of the Lamperti transformation of an SDE and of the random change of measure via the Girsanov theorem, where the latter can be characterized by means of the function $h$. This transformation will play a fundamental role in the remainder of the paper.

\medskip

\begin{proposizione}\label{remodeling_map}
A generic SDE of the form \eqref{SDE} with $\sigma >0$  can be obtained starting from \eqref{Brownian_motion_SDE} by applying
the stochastic transformation $ T=(\tilde{\Phi}, h)$ with \(\tilde{\Phi}=(\Phi(x,f(t)),f(t))\) and where
\begin{equation}\label{Reconstruction map}
   \begin{matrix}
       \Phi(x,t)= F^{-1}(x,t),\\
       f(t)=t\\
       h(x,t)=\Big(\frac{\mu(x,t)}{\sigma(x,t)}-\frac 12\frac{\partial\sigma(x,t)}{ \partial x}\Big)\circ \tilde{\Phi}
       -\frac{1}{\sigma\circ \tilde{\Phi}(x,t)}\frac{\partial \Phi(x,t)}{\partial t}
              \end{matrix} \end{equation}
 and $F(x,t) $ is given by \eqref{Lamperti} (see Remark \ref{invertible lamperti} about the invertibility of \( F\)). 
 \end{proposizione}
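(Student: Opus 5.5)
The plan is a direct computation with the explicit action $E_T$ of Definition \ref{def trasf stoc}, taking $\mu_0=0$ and $\sigma_0=1$ (the Brownian motion \eqref{Brownian_motion_SDE}). The structural facts then follow from Theorem \ref{trasformate}: $P_T(X)$ solves $\mathrm{SDE}_{E_T(\mu_0),E_T(\sigma_0)}$ and $P_T(W)$ is a $\mathbb{Q}$-Brownian motion. It therefore suffices to check the coefficient identities
\[ E_T(1)=\sigma, \qquad E_T(0)=\mu . \]
Since $f=\mathrm{id}$, we have $f'=1$, $\Phi^f=\Phi=F^{-1}$ and $\tilde{\Phi}(x,t)=(F^{-1}(x,t),t)$, hence $\tilde{\Phi}^{-1}(y,t)=(F(y,t),t)$. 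Remark \ref{invertible lamperti} guarantees that $\tilde{\Phi}$ is a diffeomorphism and that $\Phi_x\neq0$, so \eqref{invertibility} holds.

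\emph{Diffusion coefficient.} By \eqref{ET}, $E_T(1)=\Phi_x\circ\tilde{\Phi}^{-1}$. Differentiating $F(F^{-1}(x,t),t)=x$ in $x$ and using $F_x=1/\sigma$ gives $\Phi_x(x,t)=\sigma(\Phi(x,t),t)$. Composing with $\tilde{\Phi}^{-1}$ then yields $E_T(1)(y,t)=\sigma(y,t)$.

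\emph{Drift.} Here the generator of the Brownian SDE is $L_t=\partial_t+\tfrac12\partial_{xx}$, so \eqref{ET} reads
\[ E_T(0)=\Bigl(\Phi_t+\tfrac12\Phi_{xx}+\Phi_x h\Bigr)\circ\tilde{\Phi}^{-1}. \]
Differentiating $\Phi_x=\sigma\circ\tilde{\Phi}$ once more in $x$ gives $\Phi_{xx}=(\sigma_x\circ\tilde{\Phi})\,\Phi_x=(\sigma_x\sigma)\circ\tilde{\Phi}$. Substituting $h$ from \eqref{Reconstruction map} and multiplying by $\Phi_x=\sigma\circ\tilde{\Phi}$ gives
\[ \Phi_x h=\Bigl(\mu-\tfrac12\sigma\sigma_x\Bigr)\circ\tilde{\Phi}-\Phi_t . \]
Adding $\Phi_t+\tfrac12\Phi_{xx}$, the $\Phi_t$ terms cancel and the $\tfrac12\sigma\sigma_x$ terms cancel, leaving $\mu\circ\tilde{\Phi}$. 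Composing with $\tilde{\Phi}^{-1}$ gives $E_T(0)=\mu$.

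The hypotheses of Theorem \ref{trasformate} also need checking: $\mathrm{SDE}_{0,1}$ is non-explosive, and $h$ must be non-explosive for it. The shifted equation $\mathrm{SDE}_{h,1}$ is exactly the Lamperti-transformed equation of Theorem \ref{th lamperti}, since it is conjugate via $\tilde{\Phi}$ to $\mathrm{SDE}_{\mu,\sigma}$. Non-explosiveness of $\mathrm{SDE}_{h,1}$ therefore follows from that of $\mathrm{SDE}_{\mu,\sigma}$, which is ensured by the linear growth in Assumption A together with the bijectivity of $F(\cdot,t)$ onto $\mathbb{R}$. Lemma \ref{lemma non expl} then makes the density a true martingale.

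The computation itself is routine. The main delicate point is this last non-explosiveness and martingale issue, together with the case of a proper interval $I$, where one uses \eqref{Lamperti estesa} and the divergence conditions to ensure that $F(\cdot,t)$ still maps onto $\mathbb{R}$. Since the paper has agreed to assume non-explosiveness implicitly, I would just state it as a standing assumption.
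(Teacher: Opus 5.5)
Your proposal is correct and is the paper's argument: the proof there also just checks $E_T(1)=\sigma$ and $E_T(0)=\mu$ from \eqref{ET} with $f=\mathrm{id}$. Your identities $\Phi_x=\sigma\circ\tilde{\Phi}$ and $\Phi_{xx}=(\sigma\sigma_x)\circ\tilde{\Phi}$ only make explicit the cancellation that the paper asserts without computation.

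Your discussion of the non-explosiveness of $\mathrm{SDE}_{h,1}$ goes beyond the paper, which relies on its standing implicit assumption. Note that your linear-growth argument settles this only for $I=\mathbb{R}$. On a proper interval, linear growth does not stop $X$ from reaching $\partial I$, so there the standing assumption is genuinely needed.
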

 \begin{proof}

 Let us apply $ T$ to the one-dimensional $ SDE_{(0,1)}$ given by \eqref{Brownian_motion_SDE}.
Since \( f=\mathrm{id}\), then \(\Phi^f=\Phi\). Thus, according to Definition \ref{def trasf stoc}, the action of $ T$ on the coefficients of the SDE is given by
\[ E_T(\mu_{B.M.})= E_T(0)=\Big(L_t(\Phi) + \Phi_x\sigma_{B.M.} h\Big)\circ \tilde{\Phi}^{-1}=\Big(L_t(\Phi) + \Phi_x 1  h \Big)\circ \tilde{\Phi}^{-1}=\mu\]
and
\[E_T(\sigma_{B.M.})=E_T(1)= (\Phi_x\sigma_{B.M.})\circ\tilde{\Phi}^{-1}=(\Phi_x \cdot 1)\circ\tilde{\Phi}^{-1}=\sigma(\Phi(x,t),t)\circ\tilde{\Phi}^{-1}=\sigma.\]
\end{proof}
   \noindent The assumption $\sigma>0$ in Proposition \ref{remodeling_map} ensures the well-posedness of the Lamperti map. Naturally, the same would hold for $\sigma<0$ (see Remark \ref{remark segno sigma}).
Thus, every $ SDE_{\mu,\sigma}$ with constant-sign diffusion coefficient can be recovered from Brownian motion with a transformation of the previous type. This justifies the following definitions.

\begin{definizione}\label{def remodeling restoring}
    We call $\ T=(\tilde{\Phi}, h)$, where $\tilde{\Phi}=(\Phi^f, f)$ and with $ \Phi,f,h$ as in  (\ref{Reconstruction map}), a remodeling map.
    Conversely, we call the inverse transformation $\ T^{-1}$ a restoring map.
\end{definizione}
\begin{osservazione}
    The inverse transformation $\ T^{-1}$ permits us to reduce an $ SDE_{\mu,\sigma}$ to the driving Brownian motion \eqref{Brownian_motion_SDE}. 
\end{osservazione}
\bigskip

\section{Transformations of SDEs and of related KBEs}\label{Section sde kbe}
In this section, we investigate the connection between the transformations applied to the SDE \eqref{SDE} and those applied to the corresponding KBE \eqref{kbe a}. The material discussed in this section will subsequently be used to examine, as a special case, the relationship between the symmetries of an SDE and those of the corresponding KBE, which will be the focus of Section \ref{section symmetries}.

\subsection{The correspondence between transformations}
Before stating the main result, we first briefly recall the probabilistic representation of second-order linear parabolic PDEs featuring a zero-order potential term. Consider a PDE of the form
\begin{equation} \label{eq:pde_with_potential}
    u_t + \mu(x,t)u_x + a(x,t)u_{xx} + c(x,t)u = 0, \quad (x,t) \in \mathbb{R} \times [0,T),
\end{equation}
subject to the terminal condition $u(x,T) = g(x)$. Here, we assume that the diffusion coefficient can be written as $a(x,t) = \frac{1}{2}\sigma^2(x,t)$ for some sufficiently regular function $\sigma$. 

According to the well-known Feynman-Kac formula, the solution to equation \eqref{eq:pde_with_potential} admits a stochastic representation intrinsically linked to the associated Itô stochastic differential equation:
\begin{equation} \label{eq:associated_sde_fk}
    dX_s = \mu(X_s,s)\,ds + \sigma(X_s,s)\,dW_s, \quad s \in [t,T],
\end{equation}
with the initial condition $X_t = x$, where $W$ denotes a standard \(\mathbb{P}-\)Brownian motion. Specifically, the solution $u(x,t)$ to \eqref{eq:pde_with_potential} can be expressed as follows
\begin{equation} \label{eq:fk_formula}
    u(x,t) = \mathbb{E}_{\mathbb{P}}\left[ \exp\left( \int_t^T c(X_s,s)\,ds \right) g(X_T)|X_t=x \right],
\end{equation}
provided that the coefficients are regular enough and the expectation is well-defined.\\

The most general form-preserving transformation applicable to a KBE is a mapping \(\hat{T}\) that converts a KBE of the form \eqref{kbe a} into
\begin{equation}\label{KBE2b}
    \hat{u}_{\hat{t}} + \hat{\mu}(\hat{x},\hat{t}) \hat{u}_{\hat{x}} + \hat{a}(\hat{x},\hat{t}) \hat{u}_{\hat{x}\hat{x}} + V(\hat{x},\hat{t}) \hat{u} = 0,
\end{equation}
where the function \(V\) is frequently referred to in the literature as a potential term. Such transformations take the form \(\hat{T} = (\tilde{\Phi}, \theta)\), with \(\tilde{\Phi} = (\Phi^f, f)\), and operate as follows (see, e.g., \cite{Olver}):
\begin{equation}\label{T kbe b}
    t \to \hat{t} := f(t), \quad x \to \hat{x} := \Phi^f(x,t), \quad u \to \hat{u} = \theta(x,t) u + \Psi(x,t), \quad \text{s.t.} \quad L_t\left(\frac{\Psi}{\theta}\right) = 0.
\end{equation}

Hereafter, we shall refer to this type of mappings on the PDE as \textit{admissible transformations}, since they are precisely the transformations that preserve the linear parabolic structure of the KBE, up to the possible addition of a zero-order potential term \(V u\).\\
\\
The following theorem shows that quasi-Doob stochastic transformations provide the natural framework for establishing a complete correspondence between the transformations applied to an SDE and the admissible transformations applied to the corresponding KBE:

\begin{teorema}\label{teo quasi doob}
 Consider the $\text{SDE}_{\mu,\sigma}$ \eqref{SDE}, where $W_t$ is a $\mathbb{P}$-Brownian motion, and the related KBE \eqref{kbe a}, with terminal condition
\begin{equation}\label{terminal condition b}
    u(x,T)=g(x).
\end{equation}
There is a  correspondence, modulo $\Psi$, up to multiplication of \(u\) by nonzero constants and up to perturbations of the quasi-Doob function \(k\) of the form
\(k \mapsto k+g(t)\), between the group of admissible transformations of the form $\hat{T}$ applied to \eqref{kbe a} and the group of stochastic transformations of the form $T=(\tilde{\Phi}, h)$, where \(\tilde{\Phi}=(\Phi^f,f)\) and \( h=\sigma k_x\) is a quasi-Doob-type change of measure, applied to \eqref{SDE}, via $k=-\ln|\theta|$.
\end{teorema}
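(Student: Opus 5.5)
The plan is a direct computation. We write the admissible transformation $\hat T=(\tilde\Phi,\theta)$ in the inverse direction and compare the resulting PDE for $\hat u$ with the KBE of the SDE transformed by $T=(\tilde\Phi,\sigma k_x)$, using Definition \ref{def trasf stoc} and Theorem \ref{trasformate}.

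\emph{Removing $\Psi$ and fixing the sign.} The condition $L_t(\Psi/\theta)=0$ says exactly that $\Psi/\theta$ solves the original KBE. So by linearity the additive part $\Psi$ only shifts $\hat u$ by a solution of the transformed equation, and we may take $\Psi=0$; this is the ``modulo $\Psi$'' clause. Likewise multiplying $\theta$ by a nonzero constant, in particular by $-1$, does not change the form of \eqref{KBE2b}, so we may assume $\theta>0$ and set $k=-\ln\theta$.

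\emph{From the KBE side to the SDE side.} Put $w:=\hat u\circ\tilde\Phi$, so that $u=e^{k}w$. A Leibniz-rule computation gives
\[
L_t(e^k w)=e^k\big[L_t w+\sigma^2 k_x w_x+(e^{-k}L_te^k)\,w\big].
\]
With $h=\sigma k_x$, the operator $L_t+\sigma h\,\partial_x$ is the generator of $\mathrm{SDE}_{\mu+\sigma h,\sigma}$, i.e. the Girsanov-transformed equation of Theorem \ref{teo trasf girsanov}. Applying the chain rule to $w=\hat u\circ\tilde\Phi$ and comparing with \eqref{ET} gives
\[
(L_t+\sigma h\partial_x)(\hat u\circ\tilde\Phi)=f'\big[(\partial_{\hat t}+E_T(\mu)\partial_{\hat x}+\tfrac12E_T(\sigma)^2\partial_{\hat x\hat x})\hat u\big]\circ\tilde\Phi .
\]
The coefficient of $\hat u_{\hat x}$ is $L_t(\Phi^f)+\Phi^f_x\sigma h$, precisely the numerator of $E_T(\mu)$; this matching is the key point. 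Hence $L_tu=0$ holds if and only if $\hat u$ solves \eqref{KBE2b} with $\hat\mu=E_T(\mu)$, $\hat a=\tfrac12E_T(\sigma)^2$ and
\[
V=\Big(\frac{e^{-k}L_te^k}{f'}\Big)\circ\tilde\Phi^{-1}.
\]
By Definition \ref{def doob e qd} and Proposition \ref{caratterizzazione finita d e qd}, $h=\sigma k_x$ is a quasi-Doob change of measure with function $k$, so every admissible $\hat T$ yields a quasi-Doob stochastic transformation $T$.

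\emph{From the SDE side to the KBE side, and well-definedness.} Conversely, given $T=(\tilde\Phi,h)$ with $h=\sigma k_x$, we set $\theta=e^{-k}$, and the same identity shows that $\hat T=(\tilde\Phi,\theta)$ is admissible. The function $k$ is determined by $h$ only up to $k\mapsto k+g(t)$, since $\sigma\neq0$. Such a change multiplies $\theta$ by $e^{-g(t)}$, leaves $h$ and $T$ unchanged, and only alters $V$ by a time-dependent term. This is the stated ambiguity, and we then check that the two assignments are mutually inverse modulo these identifications.

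\emph{Compatibility with the group laws.} This is the step I expect to be most delicate, because of the bookkeeping with $f'$ and $\tilde\Phi$. Composing admissible maps multiplies the factors as $\theta_{21}=\theta_1\,(\theta_2\circ\tilde\Phi_1)$, i.e. $k_{21}=k_1+k_2\circ\tilde\Phi_1$. Here $k_2$ is the quasi-Doob function relative to the first transformed SDE, whose diffusion coefficient is $E_{T_1}(\sigma)$, so $h_2=E_{T_1}(\sigma)\,\partial_{\hat x}k_2$. Then
\[
\sigma\partial_x k_{21}=h_1+\sigma\Phi^{f_1}_x\,(\partial_{\hat x}k_2\circ\tilde\Phi_1)=h_1+\sqrt{f_1'}\,\big(E_{T_1}(\sigma)\,\partial_{\hat x}k_2\big)\circ\tilde\Phi_1=h_1+\sqrt{f_1'}\,(h_2\circ\tilde\Phi_1),
\]
which is exactly the composition law \eqref{T composizione}. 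The inverse law \eqref{T inversa} and the neutral element follow in the same way, so the correspondence is a group homomorphism (modulo the stated identifications).
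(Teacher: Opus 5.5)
Your proposal is correct and follows essentially the paper's route: you transform both the KBE and the SDE, match the drift and diffusion coefficients, and read off $h=-\sigma\theta_x/\theta=\sigma k_x$ with $k=-\ln|\theta|$. The differences are that you derive the transformed KBE yourself through the conjugation identity for $e^{-k}L_t e^{k}$ instead of citing it, which exhibits the potential directly as $\bigl(e^{-k}L_te^{k}/f'\bigr)\circ\tilde{\Phi}^{-1}$ in agreement with \eqref{coefficienti trasformati pde}, and that you additionally check compatibility with the composition law \eqref{T composizione}, which the paper's proof leaves implicit.
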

\begin{proof}
    Transformation \( \hat{T}\) given in \eqref{T kbe b} maps \eqref{kbe a} into (see, e.g. \cite{Olver})
\begin{equation}\label{KBE2bb}
\hat{u}_{\hat{t}}+\hat{\mu} (\hat{x},\hat{t}) \hat{u}_{\hat{x}}+\hat{a}(\hat{x},\hat{t}) \hat{u}_{\hat{x}\hat{x}}+V(\hat{x},\hat{t}) \hat{u}=0
\end{equation}
where 
\begin{equation}\label{coefficienti trasformati pde} V(\hat{x},\hat{t})=\left[ \frac{1}{\theta f'(t)}(\sigma^2 \frac{\theta_x^2}{\theta}-L_t(\theta))\right]\circ \tilde{\Phi}^{-1}\end{equation}
\[ \hat{\mu}(\hat{x},\hat{t}) = \left[\frac{1}{f'(t)}\left(L_t(\Phi^f)-(\Phi^f_x)\sigma^2\frac{\theta_x}{\theta}\right) \right]\circ\tilde{\Phi}^{-1}  \]
\[ \hat{a}(\hat{x},\hat{t})=\left[\frac{1}{f'}(\Phi^f_x)^2\left(\frac 12 \sigma^2 \right)\right]\circ \tilde{\Phi}^{-1}.\]
By hypothesis, KBE \eqref{kbe a} is related to SDE \eqref{SDE}. We aim to recover the stochastic transformation \( T=(\tilde{\Phi},h)\) that maps SDE \eqref{SDE} into the SDE related to KBE \eqref{KBE2bb}. A generic stochastic transformation \( T=(\tilde{\Phi},h)\) maps SDE \eqref{SDE} into (see Theorem \ref{trasformate})
\begin{equation}\label{SDE2}
    dY_{t'}=\mu'(Y_{t'},t')dt'+\sigma'(Y_{t'},t') dW^{\mathbb{Q}}_{t'}
\end{equation}
where \( d\mathbb{Q}/d\mathbb{P}\) is given by the Dooléans-Dade exponential of \( h\) and
\[ \mu'(y,t')=\left[\frac{1}{f'}\left(L_t(\Phi^f)+(\Phi^f_x)\sigma h\right) \right]\circ\tilde{\Phi}^{-1}\]
\[ \sigma'(y,t')=\left[\frac{1}{\sqrt{f'}}(\Phi^f_x)\sigma \right]\circ \tilde{\Phi}^{-1}.\]
To recover the stochastic transformation \( T\) corresponding to the deterministic \( \hat{T}\), it suffices to equate the coefficients:
\[\mu'=\hat{\mu}, \quad \quad \frac{1}{2}\sigma'^2=\hat{a},\]
 which yields
\begin{equation}\label{h dim}h=-\sigma \frac{\theta_x}{\theta}.\end{equation}
Since we are working in 1-dimension, every Girsanov-type  measure change is a quasi-Doob measure change: namely, there exists a function \(k \) such that \( h=\sigma k_x\). So, by substituting \( h\) with \(\sigma  k_x\) and integrating, we immediately obtain 
\begin{equation}\label{k dim} k=-\ln|\theta|.\end{equation}
\end{proof}

\begin{osservazione}
    In one dimension, under the assumption of non-degenerate diffusion-coefficient introduced in Section \ref{sezione lamperti}, any generic stochastic transformation in the sense of Definition \ref{def trasf stoc} is of quasi-Doob type. Indeed it is always possible to find a function $k$ such that $h = \sigma k_x$ by setting $k = \int \frac{h}{\sigma} dx$. Consequently, Theorem \ref{teo quasi doob} states that in one dimension, the admissible transformations on the KBE correspond precisely to the stochastic transformations introduced in Definition \ref{def trasf stoc}.
\end{osservazione}

\begin{osservazione}\label{oss one to one}
In order to preserve the injectivity of the correspondence, it was necessary to quotient out by \(\Psi\), by constant rescalings of \(u\), and, in the quasi-Doob case, also by transformations of the form \(k \mapsto k+g(t).\) Indeed, the transformation of the KBE given by
\[
\hat{t}=t, \qquad \hat{x}=x, \qquad \hat{u}=cu, \qquad c>0,
\]
which, in our notation, corresponds to \(\hat{T}=(Id,\theta=c),\) is associated by Theorem \ref{teo quasi doob} with a stochastic transformation of the form \(T=(Id,h),\) where
\[
h=\sigma k_x
=\sigma\bigl(-\ln(c)\bigr)_x
=0.
\]
Therefore, all nonzero constant rescalings of \(u\) are mapped to the identity transformation of the SDE. This is in fact consistent with the probabilistic interpretation of this correspondence given in Theorem \ref{teo interpretazione probabilistica}; see Remark \ref{oss moltiplicazione c}.
Similarly, replacing the quasi-Doob function \(k\) by
\[
k(x,t)\mapsto k(x,t)+g(t)
\]
leaves \(h=\sigma k_x\) unchanged, and hence does not affect the associated stochastic transformation \(T=(\tilde{\Phi},h).\) On the other hand, it changes \(\theta=e^{-k}\), and therefore changes the corresponding transformation \(\hat{T}=(\tilde{\Phi},\theta)\) of the KBE. This issue does not arise when considering the correspondence with Doob-type transformations; see Corollary \ref{doob kbe fin}. Indeed, in that case the function \(k\) must also satisfy
\(L_t(k)=-\frac{1}{2}h^2\) (see Proposition \ref{Prop conddoob}), and hence the choice of the integration term \(g(t)\) cannot be arbitrary. A possible alternative way of formalizing more precisely the correspondence established in Theorem \ref{teo quasi doob} would be to enrich the definition of stochastic transformation by retaining the function \(k\) itself, rather than only its gradient through \( h=\sigma k_x.\)
Finally, it is worth observing that, since the correspondence between transformations is based on \(k=-\ln|\theta|\), the sign of \(\theta\) is lost. This is again consistent with the probabilistic interpretation of the correspondence; see Remark \ref{oss moltiplicazione c}.
\end{osservazione}

At this point, a natural question arises: what are the proper transformations to consider within the stochastic framework to ensure that the corresponding mapping on the associated KBE strictly preserves its form, without the addition of potential terms? In other words, we aim to map the KBE \eqref{kbe a} into another KBE of the form
\begin{equation}\label{KBE1b}
    \hat{u}_{\hat{t}} + \hat{\mu}(\hat{x},\hat{t}) \hat{u}_{\hat{x}} + \hat{a}(\hat{x},\hat{t}) \hat{u}_{\hat{x}\hat{x}} = 0,
\end{equation}
and to determine the corresponding stochastic transformation. We refer to such mappings on the KBE as \textit{strictly admissible transformations}, precisely because they rigidly preserve its structure without introducing any potential terms. The answer to this question is provided by Doob-type transformations, and this result can be proven as a special case of Theorem \ref{teo quasi doob}:
\begin{corollario}\label{doob kbe fin}
    Consider the SDE \eqref{SDE},  where \( W_t\) is a \(\mathbb{P}-\)Brownian motion,  and the related KBE \eqref{kbe a},  with terminal condition
    \begin{equation}\label{terminal condition}
    u(x,T)=g(x).
    \end{equation}
    The group of strictly admissible transformations preserving the linear parabolic structure of KBE \eqref{kbe a} (without adding a potential term \( V(x,t) u(x,t)\)) is given by  \( \hat{T}=(\tilde{\Phi},\theta)\), acting as \eqref{T kbe b} but with the additional constraint 
 \(L_t(\theta)=\sigma^2 \frac{\theta_x^2}{\theta}\).\\
    There is a correspondence, modulo \( \Psi\) and up to multiplication of \( u\) by nonzero constants, between the group of strictly admissible transformations \( \hat{T}\) applied to \eqref{kbe a} and the group of stochastic transformations \( T=(\tilde{\Phi},h)\), where \(h=\sigma k_x\) is a Doob-type change of measure, applied to \eqref{SDE}, via \( k =-\ln|\theta|\). 
\end{corollario}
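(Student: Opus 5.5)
The plan is to specialize Theorem \ref{teo quasi doob} to the case $V\equiv 0$ and identify this constraint with the Doob condition of Proposition \ref{caratterizzazione finita d e qd}.

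\emph{Step 1: characterize strictly admissible transformations.} By the explicit form \eqref{coefficienti trasformati pde} of the potential produced by $\hat{T}=(\tilde{\Phi},\theta)$, and since $\theta f'\neq 0$ and $\tilde{\Phi}$ is invertible, $V\equiv 0$ holds if and only if
\[
L_t(\theta)=\sigma^2\frac{\theta_x^2}{\theta}.
\]
This gives the first assertion. Group closure is inherited from the group of admissible transformations: a composition or inverse of maps that produce no potential again produces none, because the potential is read off from the transformed equation.

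\emph{Step 2: translate the constraint via $k=-\ln|\theta|$.} Theorem \ref{teo quasi doob} associates to $\hat{T}$ the stochastic transformation $T=(\tilde{\Phi},h)$ with $h=-\sigma\theta_x/\theta=\sigma k_x$. I will write $\theta=\epsilon e^{-k}$ with a constant sign $\epsilon=\pm1$; the sign is irrelevant, since the constraint is homogeneous of degree one in $\theta$. A direct computation gives $\theta_x=-k_x\theta$ and $\theta_{xx}=(k_x^2-k_{xx})\theta$, so
\[
L_t(\theta)=\theta\Bigl(-L_t(k)+\tfrac12\sigma^2k_x^2\Bigr).
\]
The constraint $L_t(\theta)=\sigma^2\theta k_x^2$ then becomes $-L_t(k)+\tfrac12\sigma^2k_x^2=\sigma^2k_x^2$, that is,
\[
L_t(k)=-\tfrac12\sigma^2k_x^2=-\tfrac12 h^2.
\]
By Proposition \ref{caratterizzazione finita d e qd}, this is exactly the statement that $h$ is of Doob type, equivalently $L_t(e^k)=0$. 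The argument reverses: given a Doob-type $h=\sigma k_x$, setting $\theta=e^{-k}$ satisfies the constraint, so the corresponding $\hat{T}$ from Theorem \ref{teo quasi doob} is strictly admissible.

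\emph{Step 3: the quotients.} The correspondence is modulo $\Psi$ and constant multiples of $u$, as in Remark \ref{oss one to one}. The main point needing care is that the ambiguity $k\mapsto k+g(t)$ of Theorem \ref{teo quasi doob} is now absent. If both $k$ and $k+g$ satisfy $L_t(k)=-\tfrac12\sigma^2k_x^2$, then, since $(k+g)_x=k_x$, we get $L_t(g)=g'(t)=0$, so $g$ is constant. A constant shift of $k$ corresponds to multiplying $\theta$ by a nonzero constant, which is already quotiented out. Hence the correspondence holds modulo $\Psi$ and constant rescalings alone. I expect this step, showing that the time-dependent ambiguity collapses to constants, to be the only point that requires attention beyond bookkeeping.
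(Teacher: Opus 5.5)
Your proposal is correct and follows the paper's proof. Like the paper, you set $V=0$ in \eqref{coefficienti trasformati pde} to obtain $L_t(\theta)=\sigma^2\theta_x^2/\theta$, rewrite this through $|\theta|=e^{-k}$ as $L_t(k)=-\frac12 h^2$, and recognize the Doob condition of Proposition \ref{caratterizzazione finita d e qd}. Your explicit computation, the converse direction, and the check that the $k\mapsto k+g(t)$ ambiguity collapses to constant shifts (which the paper only notes in Remark \ref{oss one to one}) fill in details the paper leaves implicit; they do not change the route.
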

\begin{proof}
    The proof proceeds as in Theorem \ref{teo quasi doob}. We consider an admissible transformation $\hat{T}=(\tilde{\Phi},\theta)$ and impose that the potential $V$ in the transformed equation vanishes. From \eqref{coefficienti trasformati pde}, this yields the additional constraint
\[ L_t(\theta) = \sigma^2\frac{\theta_x^2}{\theta}. \]
Proceeding exactly as in the proof of Theorem \ref{teo quasi doob}, we impose the equality between the transformed coefficients of the SDE and those of the KBE, which once again yields \eqref{h dim} and \eqref{k dim}.

The condition
\[ L_t(\theta) = \sigma^2 \frac{\theta_x^2}{\theta} \]
imposed on $\theta$ to preserve the structure of \eqref{kbe a} without introducing a potential term can be rewritten in terms of $k$ (via $|\theta|=e^{-k}$) as
\begin{equation}\label{h theta}
    L_t(k) = -\frac{1}{2}h^2.
\end{equation}
Equation \eqref{h theta} is precisely the condition required for the change of measure to be of Doob type (see Proposition \ref{caratterizzazione finita d e qd}). Thus, the strict admissibility requirement ensuring that the structure of the KBE is preserved corresponds to imposing that the change of measure is of Doob type. In particular (see Section \ref{section_3}), we have
\[ \frac{d\mathbb{Q}}{d\mathbb{P}}\bigg|_{\mathcal{F}_t} = e^{k(X_t,t)-k(X_0,0)}. \]
\end{proof}

The correspondence between stochastic transformations and transformations of the corresponding KBE, as illustrated in Theorem \ref{teo quasi doob} and Corollary \ref{doob kbe fin}, admits an interesting probabilistic interpretation, which we present in the following theorem. Since the function \( \Psi \) plays no role in the correspondence, in what follows, unless explicitly stated otherwise, we shall assume \( \Psi=0 \).

\begin{teorema}[Probabilistic interpretation of the correspondence]\label{teo interpretazione probabilistica}
    Consider the SDE \eqref{SDE}, where $W_t$ is a $\mathbb{P}$-Brownian motion, and the related KBE \eqref{kbe a}, with terminal condition
\begin{equation}\label{terminal condition c}
    u(x,T)=g(x).
\end{equation}

Let us consider the admissible transformation $\hat{T}=(\tilde{\Phi}, \theta)$ on the KBE given in \eqref{T kbe b} and the corresponding quasi-Doob stochastic transformation $T=(\tilde{\Phi},h)$ on the SDE, where $h=\sigma k_x$ and $k=-\ln |\theta|$. 

The transformation  \(u \mapsto \hat{u} \) on the KBE corresponds, in the SDE setting, to a quasi-Doob change of probability measure \( \mathbb{P} \mapsto \mathbb{Q}, \)
with
\[ \left.\frac{d\mathbb{Q}}{d\mathbb{P}}\right|_{\mathcal{F}_t} := \exp\left\{ k(X_t,t)-k(X_0,0)-\int_0^t \left(L_s( k (X_s,s))+\frac{1}{2}h^2(X_s,s)\right)ds\right\},\]
and vice versa.

In particular, the relation 
\[ u=\frac{\hat{u}}{\theta} \]
in the deterministic KBE context corresponds, in the SDE setting, to the well-known Bayes' rule
\[ \mathbb{E}_{\mathbb{P}}\left[g(X_T) \mid X_t=x\right] = \mathbb{E}_{\mathbb{Q}}\left[\frac{Z_T}{Z_t}g(X_T) \mid X_t=x\right], \]
where 
\[ Z_t = \left.\frac{d\mathbb{P}}{d\mathbb{Q}}\right|_{\mathcal{F}_t}. \]

In addition, if $\hat{T}$ is strictly admissible (i.e., $L_t(\theta)=\sigma^2\frac{\theta^2_x}{\theta}$) and, equivalently, $T$ is of Doob type (i.e., $L_t(k)=-\frac{1}{2}h^2$), then the change of probability measure $\mathbb{P}\mapsto \mathbb{Q}$ corresponding to $u\mapsto \hat{u}$ is of Doob type, given by
\[ \left.\frac{d\mathbb{Q}}{d\mathbb{P}}\right|_{\mathcal{F}_t} := \exp\{ k(X_t,t)-k(X_0,0) \}. \]
\end{teorema}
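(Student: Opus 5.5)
The argument has three parts: identify the Radon--Nikodym density, derive Bayes' rule from Feynman--Kac on both sides, and specialise to the Doob case.

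\emph{The density.} Start from the quasi-Doob change of measure with $h=\sigma k_x$ and $k=-\ln|\theta|$, as supplied by Theorem \ref{teo quasi doob}. By Definition \ref{trasformazione di girsanov}, the density is the Doléans-Dade exponential of $h$. Apply Itô's formula to $k(X_t,t)$:
\[
\int_0^t \sigma k_x(X_s,s)\,dW_s = k(X_t,t)-k(X_0,0)-\int_0^t L_s k(X_s,s)\,ds .
\]
Substituting this into the exponent gives the stated formula for $d\mathbb{Q}/d\mathbb{P}|_{\mathcal F_t}$. Rewriting $k=\ln\phi$ with $\phi=|\theta|^{-1}$, the integrand satisfies $L_s k+\tfrac12 h^2=e^{-k}L_s(e^k)$. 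This matches \eqref{dQdP qd} and the quasi-Doob characterization of Proposition \ref{caratterizzazione finita d e qd}, so the identification goes in both directions.

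\emph{Bayes' rule.} First represent $\hat u$ by Feynman--Kac. Since $\Psi=0$, we have $\hat u=\theta u$, and $\hat u$ solves the transformed equation \eqref{KBE2bb} with potential $V$. The coefficients $\hat\mu,\hat a$ of that equation are, by the proof of Theorem \ref{teo quasi doob}, exactly those of the SDE driven by the $\mathbb{Q}$-Brownian motion. Formula \eqref{eq:fk_formula} therefore expresses $\hat u$ as a $\mathbb{Q}$-expectation of $\exp(\int V)\,\hat g$, where $\hat g=(\theta g)\circ\tilde\Phi^{-1}$ is the transformed terminal datum.

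Next, pull everything back to the original variables through $\tilde\Phi$ and the time change $f$. Using \eqref{coefficienti trasformati pde}, write $V\circ\tilde\Phi\, f'$ in terms of $k$: it equals $-(L_tk+\tfrac12h^2)$. The identity behind this is
\[
\sigma^2\theta_x^2/\theta^2-L_t\theta/\theta=-(L_tk+\tfrac12\sigma^2k_x^2) \quad\text{for } \theta=\pm e^{-k},
\]
which follows because $L_t e^{-k}=e^{-k}\bigl(-L_tk+\tfrac12\sigma^2k_x^2\bigr)$. The change of variables $s\mapsto f(s)$ absorbs the factor $f'$ in $\int V\,d\hat s$.

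Combining these, $\theta(x,t)^{-1}\hat u$ becomes the $\mathbb{Q}$-conditional expectation of
\[
\frac{\theta(X_T,T)}{\theta(x,t)}\exp\Bigl(-\int_t^T (L_sk+\tfrac12h^2)\,ds\Bigr)\,g(X_T).
\]
Since $\theta=\pm e^{-k}$, this weight equals $\exp\{-(k(X_T,T)-k(x,t))-\int_t^T(\dots)\,ds\}$, which is precisely $Z_T/Z_t$ with $Z=d\mathbb{P}/d\mathbb{Q}$, i.e. the inverse of the density computed in the first step. Then $u=\hat u/\theta$ is exactly the abstract Bayes formula $\mathbb{E}_{\mathbb P}[Y\mid\mathcal F_t]=\mathbb{E}_{\mathbb Q}[Y Z_T/Z_t\mid\mathcal F_t]$, evaluated by the Markov property at $X_t=x$. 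In this sense the deterministic identity $u=\hat u/\theta$ and Bayes' rule are the same statement.

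\emph{Doob case.} If $L_t\theta=\sigma^2\theta_x^2/\theta$, then $V=0$. Equivalently $L_tk=-\tfrac12h^2$ by Corollary \ref{doob kbe fin}, so the time integral in the density vanishes and the density reduces to $e^{k(X_t,t)-k(X_0,0)}$.

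\emph{Main obstacle.} The delicate step is the Bayes identification. It requires carefully tracking the time change $f$, the space map $\tilde\Phi$ and the filtration $\mathcal F'_t=\mathcal F_{f^{-1}(t)}$, and checking the sign of $\theta$: only $|\theta|$ enters $k$, so the sign must be treated as a constant factor. I would handle this by first proving the case $\tilde\Phi=\mathrm{Id}$, which is pure Feynman--Kac plus Bayes. I would then compose with the pure coordinate change $(\tilde\Phi,0)$ using the group law \eqref{T composizione}, noting that a deterministic change of variables does not alter conditional expectations. Justifying Feynman--Kac and the martingale property of the density is assumed under the standing regularity and non-explosiveness hypotheses.
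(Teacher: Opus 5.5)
Your route is the paper's: rewrite the Doléans-Dade density in terms of $k$ by Itô's formula, apply Feynman--Kac to $u$ under $\mathbb{P}$ and to $\hat u$ under $\mathbb{Q}$, pull back through $\tilde\Phi$ and the time change, express $V$ through $k$, recognise $Z_T/Z_t$, and use $V=0$ in the Doob case. However, the key identity you state has the wrong sign. For $\theta=\pm e^{-k}$ we have $\theta_x/\theta=-k_x$, and your own (correct) formula gives $L_te^{-k}=e^{-k}\bigl(-L_tk+\tfrac12\sigma^2k_x^2\bigr)$. Combining these,
\[
\sigma^2\frac{\theta_x^2}{\theta^2}-\frac{L_t\theta}{\theta}=\sigma^2k_x^2-\Bigl(-L_tk+\tfrac12\sigma^2k_x^2\Bigr)=L_tk+\tfrac12h^2 .
\]
So $f'\,(V\circ\tilde\Phi)=+\bigl(L_tk+\tfrac12h^2\bigr)$, not $-\bigl(L_tk+\tfrac12h^2\bigr)$. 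The paper's introductory example confirms this: for $h=2/x$ (so $k=2\ln x$) one gets $L_tk+\tfrac12h^2=1/x^2$, which is exactly the potential term $+\frac{1}{x^2}u$ stated there.

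This is not cosmetic, because it is precisely the step where Bayes' rule is supposed to appear. With your sign, the weight is $\exp\{-(k(X_T,T)-k(x,t))-\int_t^T(L_sk+\tfrac12h^2)\,ds\}$. Inverting the density from your first step instead gives
\[
\frac{Z_T}{Z_t}=\exp\Bigl\{-\bigl(k(X_T,T)-k(x,t)\bigr)+\int_t^T\bigl(L_sk+\tfrac12h^2\bigr)(X_s,s)\,ds\Bigr\}.
\]
The two differ by the factor $\exp\{2\int_t^T(L_sk+\tfrac12h^2)\,ds\}$, which is identically $1$ only in the Doob case $L_tk+\tfrac12h^2=0$. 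So, as written, your claim ``which is precisely $Z_T/Z_t$'' is asserted rather than verified. It fails exactly in the quasi-Doob setting the theorem is about.

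With the corrected sign, $e^{\int V}$ becomes $e^{+\int_t^T(L_sk+\frac12h^2)ds}$, the weight is exactly $Z_T/Z_t$, and your argument closes. The paper's displayed identity $e^{-\int_t^T\frac1\theta(L_t\theta-\sigma^2\theta_x^2/\theta)ds}=e^{-\int_t^T(L_tk+\frac12h^2)ds}$ contains the same slip, although its final Bayes formula is correct.

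Your plan to first treat $\tilde\Phi=\mathrm{Id}$ and then compose with $(\tilde\Phi,0)$ is legitimate, since $(\tilde\Phi,0)\circ(\mathrm{Id},h)=(\tilde\Phi,h)$ by \eqref{T composizione}. It is only a reorganisation of the paper's one-pass computation.
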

\begin{proof}
We already proved in Theorem \ref{teo quasi doob} and Corollary \ref{doob kbe fin} the correspondence between (strictly) admissible transformations on the KBE and (quasi-)Doob transformations on the related SDE via \( k=-\ln|\theta|\).  The Feynman-Kac representation \eqref{eq:fk_formula} for the solution of \eqref{kbe a} with terminal condition \( u(x,T)=g(x)\) is
 \[ u(x,t)=\mathbb{E}_{\mathbb{P}}[g(X_T)|X_t=x].\]
From \( \hat{u}(\hat{x},\hat{t})=\theta(x,t) u(x,t)\), the terminal condition for \eqref{KBE2bb} becomes
\[ \hat{u}(\hat{x},\hat{T})=\theta(x,T)u(x,T)=\theta(x,T)g(x).\]
Therefore, the Feynman-Kac representation for the solution \(\hat{u}\) to the transformed KBE is
\[ \hat{u}(\hat{x},\hat{t})=\mathbb{E}_{\mathbb{Q}}[e^{\int_{\hat{t}}^{\hat{T}}V(X_s,s)ds}\theta(X_T,T)g(X_T)|\hat{X}_{\hat{t}}=\hat{x}].\]
Using \eqref{coefficienti trasformati pde}, \eqref{T kbe b}, the properties of conditional expected value, the invertibility of \( \tilde{\Phi}\) and  a time change in the integral we obtain
\[\hat{u}(\hat{x},\hat{t})=\mathbb{E}_{\mathbb{Q}}\left[e^{\int_{\hat{t}}^{\hat{T}}V(X_s,s)ds}\theta(X_T,T)g(X_T)|\hat{X}_{\hat{t}}=\hat{x}\right]=\]\[=\mathbb{E}_{\mathbb{Q}}\left[e^{-\int_t^T \frac{1}{\theta}\Big(L_t(\theta)-\sigma^2 \frac{\theta_x^2}{\theta}\Big)ds}\theta(X_T,T)g(X_T)|\Phi^f(X_t,t))=\Phi^f(x,t)\right]=\]\[=\mathbb{E}_{\mathbb{Q}}\left[e^{-\int_t^T \frac{1}{\theta}\Big(L_t(\theta)-\sigma^2 \frac{\theta_x^2}{\theta}\Big)ds}\theta(X_T,T)g(X_T)|X_t=x\right].\]
Defining
\[ Z_t:=\frac{d\mathbb{P}}{d\mathbb{Q}}|_{\mathcal{F}_t}=e^{-\Big( k(X_t,t)-k(X_0,0)-\int_0^t L_t(k)+\frac{1}{2} h^2 (X_s,s)ds\Big)}\]
then, substituting the Feynman-Kac representations for \( u\) and \(\hat{u}\) in 
\[ u(x,t)=\frac{\hat{u}(\hat{x},\hat{t})}{\theta(x,t)},\]
 using the \(\mathcal{F}_t-\)measurability of \( \theta(X_t,t)\)  and the fact that
\[e^{-\int_t^T \frac{1}{\theta}\Big(L_t(\theta)-\sigma^2 \frac{\theta_x^2}{\theta}\Big)ds}=e^{-\int_t^T \big(L_t(  k)+\frac{1}{2} h^2\big) ds}=\frac{e^{-\int_0^T \big(L_t( k )+\frac{1}{2} h^2\big) ds}}{e^{-\int_0^t \big(L_t( k )+\frac{1}{2} h^2\big) ds}},\] 
we obtain
\[ \mathbb{E}_{\mathbb{P}}[g(X_T)|X_t=x]=\mathbb{E}_{\mathbb{Q}}\left[\frac{Z_T}{Z_t}g(X_T)|X_t=x\right],\]
which is exactly Bayes formula for changing measures into conditional expectations.\\
\\
The claim for the Doob case (or, equivalently, for strictly admissible transformations) follows immediately, since Corollary \ref{doob kbe fin} ensures that in this case \[V=0=L_t(\theta)-\sigma^2\frac{\theta^2_x}{\theta}=L_t(k)+\frac{1}{2}h^2.\]
\end{proof}

\begin{osservazione}
    Notice that, using the notation from Section \ref{section_3}, the function \(\theta\) of Theorem \ref{teo interpretazione probabilistica} corresponds to the one characterizing a (quasi)-Doob transformation via \(\theta=\frac{1}{\phi}\). In particular, \(\phi\) is associated with the change of measure \( \mathbb{P}\mapsto \mathbb{Q}\), whereas \(\theta\) is associated with the inverse change of measure \(\mathbb{Q}\mapsto \mathbb{P}\).
\end{osservazione}

\begin{osservazione}\label{oss moltiplicazione c}
    The correspondence between transformations is understood up to constant rescalings of \(u\), which, as shown in Remark \ref{oss one to one}, are all mapped to the identity transformation of the SDE. This is consistent with the probabilistic interpretation provided by Theorem \ref{teo interpretazione probabilistica}. Indeed, consider the transformation
\[
\hat{t}=t, \qquad \hat{x}=x, \qquad \hat{u}=cu,
\]
of the KBE, with \(c\in\mathbb{R}\), and assume for simplicity that \(c>0\). The probabilistic interpretation relates the multiplicative function \(\theta\), which in this case is the constant \(\theta=c\), to the Radon--Nikodym density associated with the change of measure. If such a density is constant, the normalization of the new probability measure requires that this constant be equal to \(1\). Therefore, from the stochastic point of view, all constant rescalings of \(u\) are identified with a single transformation, namely the identity transformation of the SDE.

The same observation also explains why the correspondence identifies transformations that differ only by the sign of \(\theta\). Indeed, the correspondence is defined through
\[
k=-\ln|\theta|,
\]
and therefore does not retain information about the sign of \(\theta\). This is again consistent with its probabilistic interpretation, since the Radon--Nikodym density defined by the stochastic exponential is always non-negative.

Finally, in the case of quasi-Doob transformations, the replacement
\[
k(x,t)\mapsto \bar{k}(x,t)\coloneqq k(x,t)+g(t)
\]
is likewise invisible at the level of the stochastic transformation. Indeed, since
\(h=\sigma k_x=\sigma \bar{k}_x,\) the corresponding change of measure is unchanged. More explicitly, the Radon--Nikodym density satisfies
\[
Z_t
=
\left.\frac{d\mathbb{P}}{d\mathbb{Q}}\right|_{\mathcal{F}_t}
=
\exp\left\{
-\left(
k(X_t,t)-k(X_0,0)
-\int_0^t
\left(
L_s(k)+\frac12 h^2
\right)(X_s,s)\,ds
\right)
\right\},
\]
and we have that
\[
\begin{aligned}
&\bar{k}(X_t,t)-\bar{k}(X_0,0)
-\int_0^t
\left(
L_s(\bar{k})+\frac12 h^2
\right)(X_s,s)\,ds
\\
&\qquad =
k(X_t,t)-k(X_0,0)
+g(t)-g(0)
-\int_0^t
\left(
L_s(k)+g'(s)+\frac12 h^2
\right)(X_s,s)\,ds
\\
&\qquad =
k(X_t,t)-k(X_0,0)
-\int_0^t
\left(
L_s(k)+\frac12 h^2
\right)(X_s,s)\,ds.
\end{aligned}\]
Hence the replacement \(k\mapsto k+g(t)\) leaves the Radon--Nikodym density, and therefore the associated stochastic transformation, unchanged.
\end{osservazione}

In Theorem \ref{teo quasi doob} and Corollary \ref{doob kbe fin}, we proved the correspondence between transformations on the KBE and stochastic transformations using a finite approach, namely, by starting from the finite stochastic transformation. This approach is intrinsically more probabilistic, as it offers the advantage of immediately exhibiting the action of the transformation on the equation. Equivalently, this correspondence could have been established using an approach more characteristic of mathematical physics, by starting from infinitesimal transformations. For the sake of completeness, in the following theorem we state the infinitesimal counterpart to Theorem \ref{teo quasi doob} and Corollary \ref{doob kbe fin}. The two approaches are equivalent: the finite version can be recovered from the infinitesimal one via flow reconstruction, whereas the infinitesimal version is derived from the finite one by differentiating with respect to the parameter and evaluating at zero.

\begin{teorema}\label{doob kbe inf}
    Consider the SDE \eqref{SDE}, where $W_t$ is a $\mathbb{P}$-Brownian motion, and the associated KBE \eqref{kbe a}, with terminal condition \eqref{terminal condition}.

Let $\Xi = m\partial_t + Y\partial_x - K u \partial_u$ be an infinitesimal transformation acting on the KBE \eqref{kbe a}, with the corresponding finite transformation given by
\begin{equation}\label{T kbe 1}
    t \mapsto t_\lambda := f_\lambda(t), \quad \quad x \mapsto x_\lambda := \Phi^{f_\lambda}_\lambda(x,t), \quad \quad u \mapsto u_\lambda := \theta_\lambda(x,t) u,
\end{equation}
where the flow reconstruction equations are
\begin{equation}\label{flow rec kbe}
    \partial_\lambda f_\lambda = m(f_\lambda), \quad \partial_\lambda \Phi^{f_\lambda}_\lambda = Y \circ \tilde{\Phi}_\lambda, \quad \partial_\lambda \theta_\lambda = -(K^\lambda \circ \tilde{\Phi}_\lambda) \exp\left(-\int K^\lambda \circ \tilde{\Phi}_\lambda \, d\lambda\right),
\end{equation}
with $\tilde{\Phi}_\lambda = (\Phi_\lambda^{f_\lambda}, f_\lambda)$ and 
\begin{equation}\label{Klambda dim inf}
    K^\lambda = \int \frac{\sigma}{E_{T_\lambda}(\sigma)} K_x \, dx,
\end{equation}
where $E_{T_\lambda}(\sigma)$ is defined according to Definition \ref{def trasf stoc}.

There is a correspondence (up to multiplication of \( K\) by nonzero constants) between the infinitesimal transformations $\Xi$ of the KBE \eqref{kbe a} and the infinitesimal stochastic transformations $V = (\tilde{Y}, H)$ of the SDE \eqref{SDE},  via the relations $H = \sigma K_x$ and $\tilde{Y} = (Y, m)$. Under this identification, admissible infinitesimal transformations---which preserve the linear parabolic structure of the KBE up to the possible addition of a zero-order potential term---correspond to quasi-Doob infinitesimal stochastic transformations. Furthermore, this correspondence restricts to one between strictly admissible infinitesimal transformations (which introduce no potential term) and strict Doob infinitesimal stochastic transformations if and only if $E_{T_\lambda}(L_t)[K^\lambda] = 0$, where 
$$
    E_{T_\lambda}(L_t) = \partial_t + \left[ \left( \frac{1}{f'_\lambda} \left[ L_t(\Phi^{f_\lambda}_\lambda) - (\Phi^{f_\lambda}_\lambda)_x \sigma^2 \frac{(\theta_\lambda)_x}{\theta_\lambda} \right] \right) \circ \tilde{\Phi}_\lambda^{-1} \right] \partial_x + \frac{1}{2} \left( \frac{1}{f'_\lambda} (\Phi^{f_\lambda}_\lambda)_x^2 \sigma^2 \right) \circ \tilde{\Phi}_\lambda^{-1} \partial_{xx}
$$
and \( K^\lambda\) is given by \eqref{Klambda dim inf}.\\
The probabilistic interpretation given by Theorem \ref{teo interpretazione probabilistica} still holds.
\end{teorema}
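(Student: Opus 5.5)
The plan is to obtain the infinitesimal statement from the finite one, Theorem \ref{teo quasi doob} together with Corollary \ref{doob kbe fin}, by passing through one-parameter groups. Given $\Xi=m\partial_t+Y\partial_x-Ku\partial_u$, I will first reconstruct the finite flow \eqref{T kbe 1} by solving \eqref{flow rec kbe}. For each $\lambda$ this yields an admissible transformation $\hat T_\lambda=(\tilde\Phi_\lambda,\theta_\lambda)$ of the KBE. Applying Theorem \ref{teo quasi doob} for each fixed $\lambda$ then produces a quasi-Doob stochastic transformation $T_\lambda=(\tilde\Phi_\lambda,h_\lambda)$ with $h_\lambda=\sigma(k_\lambda)_x$ and $k_\lambda=-\ln|\theta_\lambda|$.

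Next I check that $\lambda\mapsto T_\lambda$ is a one-parameter group. Since the correspondence of Theorem \ref{teo quasi doob} is compatible with composition, this reduces to comparing the composition law \eqref{T composizione} with the composition of KBE maps. Under composition the multiplicative factors combine as $\theta_{\lambda_2}\circ\tilde\Phi_{\lambda_1}\cdot\theta_{\lambda_1}$, so the corresponding $k$'s add in the same pattern as the $h$'s in \eqref{T composizione}.

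I will then differentiate at $\lambda=0$. The spatial and temporal components give $\tilde Y=(Y,m)$ directly. For the measure component, \eqref{flow rec kbe} gives $\theta_\lambda=\exp(-\int_0^\lambda K^{\lambda'}\circ\tilde\Phi_{\lambda'}\,d\lambda')$ up to a multiplicative constant, so $k_\lambda=\int_0^\lambda K^{\lambda'}\circ\tilde\Phi_{\lambda'}\,d\lambda'$. The same identity $k_\lambda=\int K^\lambda\circ\tilde\Phi_\lambda\,d\lambda$ appears in the proof of Proposition \ref{caratterizzazione doob e qd infin}. Differentiating at $\lambda=0$ with $K^0=K$ gives $H=\partial_\lambda h_\lambda|_{0}=\sigma K_x$. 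The constant ambiguity in $\theta_\lambda$ becomes the ambiguity of multiplying $K$ by nonzero constants and adding functions of $t$. By condition i) of Proposition \ref{caratterizzazione doob e qd infin}, this shows that admissible $\Xi$ correspond exactly to quasi-Doob infinitesimal transformations.

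For the converse direction I start from $V=(\tilde Y,H)$ with $H=\sigma K_x$. Theorem \ref{teo ricostruzione del flusso} reconstructs $T_\lambda$, and \eqref{def Klambda} defines $K^\lambda$. Setting $\theta_\lambda=e^{-k_\lambda}$, I verify that it satisfies the third equation of \eqref{flow rec kbe}. The key check is that $\partial_\lambda h_\lambda=\sqrt{f'_\lambda}\,H\circ\tilde\Phi_\lambda$ is equivalent to $\partial_\lambda k_\lambda=K^\lambda\circ\tilde\Phi_\lambda$. This uses $(K^\lambda\circ\tilde\Phi_\lambda)_x=(\Phi^{f_\lambda}_\lambda)_x\,\tfrac{\sigma}{E_{T_\lambda}(\sigma)}\circ\tilde\Phi_\lambda\,K_x\circ\tilde\Phi_\lambda$ together with $E_{T_\lambda}(\sigma)\circ\tilde\Phi_\lambda=(\Phi^{f_\lambda}_\lambda)_x\sigma/\sqrt{f'_\lambda}$. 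Note that $K_x\circ\tilde\Phi_\lambda$ here is evaluated after composition, so I need to be careful to apply the chain rule consistently.

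For the strict case, by Corollary \ref{doob kbe fin} each $\hat T_\lambda$ is strictly admissible iff $T_\lambda$ is Doob. By condition ii) of Proposition \ref{caratterizzazione doob e qd infin}, this holds iff $E_{T_\lambda}(L_t)[K^\lambda]=0$. Here I substitute $h_\lambda=-\sigma(\theta_\lambda)_x/\theta_\lambda$ from \eqref{h dim} into $E_{T_\lambda}(\mu)$ to obtain the explicit operator displayed in the statement. The probabilistic interpretation carries over because it holds for each finite $T_\lambda$ by Theorem \ref{teo interpretazione probabilistica}.

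The main obstacle is making the $\lambda$-integration constants consistent. The functions $c(\lambda,t)$ in $K^\lambda$ and the constants in $\theta_\lambda$ must be fixed so that the group property and, in the Doob case, condition ii) both hold simultaneously. My first attempt is to fix them by requiring $E_{T_\lambda}(L_t)[K^\lambda]=0$, as in the remark after Proposition \ref{caratterizzazione doob e qd infin}. In the quasi-Doob case I will absorb them into the stated quotient.
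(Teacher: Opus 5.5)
Your overall route is the paper's. You reconstruct the finite flows of $\Xi$ via \eqref{flow rec kbe} and of $V$ via \eqref{ricostruzione del flusso}, and match them through Theorem \ref{teo quasi doob} and Corollary \ref{doob kbe fin} using $k_\lambda=\int K^\lambda\circ\tilde\Phi_\lambda\,d\lambda$, as in Proposition \ref{caratterizzazione doob e qd infin}. The Doob condition $E_{T_\lambda}(L_t)[K^\lambda]=0$ then comes from condition ii) there. Your chain-rule verification that $\partial_\lambda k_\lambda=K^\lambda\circ\tilde\Phi_\lambda$ gives $\partial_\lambda h_\lambda=\sqrt{f'_\lambda}\,H\circ\tilde\Phi_\lambda$ is correct, and it is the real content.

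However, the step where you derive the group property of $T_\lambda$ from composition of the KBE maps fails as stated. The dictionary $h=-\sigma\theta_x/\theta$ depends on the diffusion coefficient of the equation being acted on. In $T_{\lambda_2}\circ T_{\lambda_1}$ the second factor acts on the SDE with diffusion $E_{T_{\lambda_1}}(\sigma)$. So the rule $\theta=(\theta_{\lambda_2}\circ\tilde\Phi_{\lambda_1})\,\theta_{\lambda_1}$ reproduces \eqref{T composizione} only if $k_{\lambda_2}$ is computed with $E_{T_{\lambda_1}}(\sigma)$ rather than $\sigma$. The two agree only when $E_{T_{\lambda_1}}(\sigma)=\sigma$, which is the symmetry case.

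Worse, the family $\hat T_\lambda$ produced by \eqref{flow rec kbe} is in general not a one-parameter group on $(x,t,u)$ at all; this is exactly why $K^\lambda$ rather than $K$ appears there. Take the paper's own example: $\sigma=1$, $\Phi_\lambda=e^\lambda x$, $\theta_\lambda=\exp(-(e^\lambda-1)x^2/2)$. Then $(\theta_{\lambda_2}\circ\tilde\Phi_{\lambda_1})\,\theta_{\lambda_1}=\exp\bigl(-[(e^{\lambda_2}-1)e^{2\lambda_1}+e^{\lambda_1}-1]\,x^2/2\bigr)\neq\theta_{\lambda_1+\lambda_2}$. By contrast, $h_\lambda=(e^\lambda-1)x$ does satisfy \eqref{T composizione}.

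The step is dispensable, so drop it. Take $\theta_\lambda$ from \eqref{flow rec kbe} and $k_\lambda=-\ln|\theta_\lambda|$. Your key check shows directly that $h_\lambda=\sigma(k_\lambda)_x$ solves the third equation of \eqref{ricostruzione del flusso} with $h_0=0$. The first two equations coincide with those of \eqref{flow rec kbe}. By the uniqueness in Theorem \ref{teo ricostruzione del flusso}, $T_\lambda$ is therefore the one-parameter group generated by $V=(\tilde Y,\sigma K_x)$. The group property is needed, and available, only on the stochastic side; use this argument in both directions.

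A minor point: a factor $c(\lambda)$ with $c(0)=1$ in $\theta_\lambda$ shifts $K$ by the additive constant $-c'(0)$, a multiple of the generator $u\partial_u$. It does not rescale $K$, so your explanation of the "multiplication of $K$ by nonzero constants" ambiguity is off.
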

\begin{proof}
The correspondence follows straightforward by reconstructing the flow of \( \Xi\) via \eqref{flow rec kbe} and the flow of \( V\) via \eqref{ricostruzione del flusso}. Indeed, it suffices to compute the transformed KBE under the action of the finite transformation related to \( \Xi\) and the transformed SDE under the action of the finite transformation related to \( V\), matching the coefficients as in the proof of Corollary \ref{doob kbe fin}. \\
The admissibility condition \( E_{T_\lambda}(L_t)[K^\lambda]=0 \), which ensures that the transformation introduces no zero-order terms to the KBE \eqref{kbe a}, is obtained from the admissibility condition for the corresponding finite transformation \( L_t(\theta_\lambda)=\sigma^2 \frac{(\theta_\lambda)_x^2}{\theta_\lambda} \) as in the proof of Proposition \ref{caratterizzazione doob e qd infin}, bearing in mind that, by matching the coefficients of the finite transformations related to \( V\) and \( \Xi\), we have that
\[
h_\lambda=-\sigma\frac{(\theta_\lambda)_x}{\theta_\lambda}.
\]
Concerning the probabilistic interpretation, we first recover, as in the proof of Proposition \ref{caratterizzazione doob e qd infin},  that 
\begin{equation}\label{kk}k_\lambda=\int K^\lambda\circ \tilde{\Phi}_\lambda d\lambda.\end{equation}
    From \eqref{flow rec kbe}, we have that
    \[ \theta_\lambda=\int  -\left( K^\lambda \circ \tilde{\Phi}_\lambda \right) \exp\left(-\int K^\lambda \circ \tilde{\Phi}_\lambda \right) d\lambda = \exp\left(-\int K^\lambda \circ \tilde{\Phi}_\lambda \right) , \]
 so, the transformed solution process $ u_{\lambda}(x_{\lambda},t_{\lambda})$ is obtained by reconstructing the flow 
    \[\        \frac{du_{\lambda}}{d\lambda}= \frac{d\theta_\lambda}{d\lambda} u =    -\left( K^\lambda \circ \tilde{\Phi}_\lambda \right) \exp\left(-\int K^\lambda \circ \tilde{\Phi}_\lambda \right) u=  -\left(K^\lambda \circ \tilde{\Phi}_\lambda \right) \theta_\lambda u= 
    \left(- K^\lambda\circ\tilde{\Phi}_\lambda \right) u_\lambda,
    \]
    which yields, using \eqref{kk},
    \[u_{\lambda} =  u e^{-\int K^\lambda\circ\tilde{\Phi}_\lambda d\lambda}=u e^{-k_\lambda(x,t)}.\] 
The assert follows by proceeding as in the proof of Corollary \ref{doob kbe fin}.
 
\end{proof}

\subsection{Example:  Brownian motion and heat equation}

Let us consider a \(\mathbb{P}-\)Brownian motion
\begin{equation}\label{example bm}
dX_t=dW_t
\end{equation}
and the related heat equation
\begin{equation}\label{example heat}
    u_t+\frac{1}{2}u_{xx}=0.
\end{equation}

\paragraph{A quasi-Doob  admissible transformation}
We first apply to \eqref{example bm} a quasi-Doob one-parameter group of stochastic transformations \( T_\lambda=(\tilde{\Phi}_\lambda, h_\lambda)\), with \( \tilde{\Phi}_\lambda=(\Phi^{f_\lambda}_\lambda, f_\lambda)\) and
\[f_\lambda(t)=t, \quad \quad\Phi^{f_\lambda}_\lambda(x,t)=\Phi_\lambda(x,f_\lambda(t))=\Phi_\lambda(x,t)=e^\lambda x, \quad \quad h_\lambda=(e^\lambda-1)x.\]
Since we are working in one dimension with a non-vanishing diffusion coefficient, any stochastic transformation is a quasi-Doob transformation, and we have to find a function \( k_\lambda\) such that \( h_\lambda=\sigma (k_\lambda)_x.\) Integrating, we obtain
\[ k_\lambda= \int \frac{h_\lambda}{\sigma}dx=\int \frac{(e^\lambda-1)x}{1}dx= (e^\lambda-1)\frac{x^2}{2}+g(\lambda,t).\]
Since there does not exist a function \( g(\lambda,t)\) such that \( L_t((e^\lambda-1)\frac{x^2}{2}+g(\lambda,t))=-\frac{1}{2}(e^\lambda-1)^2x^2=-\frac{1}{2}h_\lambda^2\), from Proposition \ref{caratterizzazione finita d e qd} \( T_\lambda\) cannot be a Doob-type transformation, and thus we can assume that the integration constant \( g(\lambda,t)\) is equal to zero. Hence,
\[ k_\lambda=(e^\lambda-1)\frac{x^2}{2}.\]
From Theorem \ref{trasformate}, if \( X\) solves \eqref{example bm}, then \( Y\) solves
\begin{equation}\label{bm trasf qd}
    dY_t=\bar{\mu}(Y_t)dt+\bar{\sigma}(Y_t)d\bar{W}^{\mathbb{Q}}_t,
\end{equation}
where, according to  Definition \ref{def trasf stoc},
\[ Y_t=P_{T_\lambda}(X_t)=e^\lambda X_t, \quad \quad\quad\bar{W}^{\mathbb{Q}}_t=P_{T_\lambda}(W_t)=W_t-\int_0^t (e^\lambda-1)X_sds, \]\[ \bar{\mu}(x,t)=E_{T_\lambda}(\mu)(x,t)=(e^\lambda-1)x, \quad \quad \quad \bar{\sigma}(x,t)=E_{T_\lambda}(\sigma)(x,t)=e^\lambda.\]
This is straightforward to verify, since if \( dX_t=dW_t\), then
\[ d(e^\lambda X_t)=\bcancel{(e^\lambda-1)e^\lambda X_t}dt + e^\lambda \left(dW_t - \bcancel{(e^\lambda-1)X_tdt}\right).\]
Theorem \ref{teo quasi doob}, jointly with formulas \eqref{T kbe b}, ensures that while \eqref{example bm} is mapped into \eqref{bm trasf qd}, \eqref{example heat} is transformed into
\begin{equation}\label{heat trasf qd}
    v_{\hat{t}}+ (e^\lambda-1)\hat{x} v_{\hat{x}}+\frac{1}{2} e^{2\lambda} v_{\hat{x}\hat{x}} + V(\hat{x},\hat{t}) v =0, 
\end{equation}
where 
\[ \hat{x}=e^\lambda x, \quad \hat{t}=t, \quad v(\hat{x},\hat{t})=u_\lambda(\hat{x},\hat{t})=\theta_\lambda(x,t) u(x,t), \quad \theta_\lambda=e^{-k_\lambda}=\exp\left(-(e^\lambda-1)\frac{x^2}{2}\right)\]
and \( V\) is given by \eqref{coefficienti trasformati pde}
\[ V(\hat{x},\hat{t})=\frac{1}{2}(e^\lambda-1)\left[(e^\lambda-1)e^{-2\lambda}\hat{x}^2+1\right].\]
Thus,
\begin{equation}\label{v ex} v(\hat{x},\hat{t})=\theta_\lambda(x,t)u(x,t)=\left(\theta_\lambda \circ \Phi_\lambda^{-1}(\hat{x},\hat{t})\right) \left( u \circ \Phi_\lambda^{-1}(\hat{x},\hat{t})\right)=\exp\left(-(e^\lambda-1)e^{-2\lambda}\frac{\hat{x}^2}{2}\right) u (e^{-\lambda}\hat{x},\hat{t}) \end{equation}
and a straight calculation shows that if \eqref{example heat} holds, then also \eqref{heat trasf qd} holds.\\
\\
The same equation can equivalently be derived using the infinitesimal approach of Theorem \ref{doob kbe inf}. Namely, we apply to SDE \eqref{example bm} the infinitesimal transformation \(V_1=(x\partial_x, x).\) By reconstructing the flow (Theorem \ref{teo ricostruzione del flusso}) with \( Y=x, m=0, H=x\), it is immediately verified that \( T_\lambda\) is the one-parameter group of finite transformations related to \(V_1.\) Since we are working in one dimension with a non-vanishing diffusion coefficient, any infinitesimal stochastic transformation is an infinitesimal quasi-Doob transformation, and we have to find a function \( K\) such that \( H=\sigma K_x\). Integrating, we obtain\footnote{Once again, since \(V_1\) cannot be of Doob-type, w.l.o.g. we assume the integration constant equal to zero.}
\[ K=\int \frac{H}{\sigma}dx=\int \frac{x}{1}dx=\frac{x^2}{2}.\]
Theorem \ref{doob kbe inf} ensures that the infinitesimal transformation applied to KBE \eqref{example heat} corresponding to \(V_1\) is given by \( \Xi_1=x\partial_x - \frac{x^2}{2}u\partial_u\). Formulas \eqref{T kbe 1}, \eqref{flow rec kbe} and \eqref{Klambda dim inf} guarantee that \eqref{heat trasf qd} still holds, since
\[ K^\lambda=\int \frac{\sigma}{E_{T_\lambda}(\sigma)} K_xdx=\int \frac{1}{e^\lambda}xdx=e^{-\lambda}\frac{x^2}{2}\]
and 
\[ \theta_\lambda (x,t)= \exp\left(-\int K^\lambda \circ \tilde{\Phi}_\lambda d\lambda \right)= \exp\left(-(e^\lambda - 1 ) \frac{x^2}{2}\right).\]

\paragraph{A Doob strictly admissible transformation}
We now apply to SDE \eqref{example bm} a Doob transformation, namely the one-parameter group of finite transformations \( T_\lambda=(\tilde{\Phi}_\lambda, h_\lambda)\), with \( \tilde{\Phi}_\lambda=(\Phi^{f_\lambda}_\lambda, f_\lambda)\) and
\[ f_\lambda(t)=t, \quad \Phi_\lambda^{f_\lambda}(x,t)=\Phi_\lambda(x,f_\lambda(t))=\Phi_\lambda(x,t)=x+\lambda t^2, \quad h_\lambda=-\lambda,\]
or, equivalently, the infinitesimal transformation \( V_2=(\tilde{Y}, H)\) with \( \tilde{Y}=(Y,m)\) and
\[ Y=t^2, \quad m=0, \quad H=-1.\]
Indeed, we can obtain \( V_2\) from \( T_\lambda\) by deriving its component with respect to \(\lambda\) and evaluating the result in \(\lambda=0\), and we can obtain \( T_\lambda\) from \( V_2\) by solving system \eqref{ricostruzione del flusso}. From Theorem \ref{trasformate} and Definition \ref{def trasf stoc}, this transformation acts in the following manner:
\begin{align}\label{Tex2} \notag &X_t \mapsto P_{T_\lambda}(X_t)=X_t+\lambda t^2,  &W_t \mapsto \bar{W}^{\mathbb{Q}}_t= W_t+\lambda t,\\  &\mu=0 \mapsto \bar{\mu}=E_{T_\lambda}(\mu)=2\lambda t -\lambda, &\sigma=1 \mapsto \bar{\sigma}=1.\end{align}
In fact, it is clear that if \( dX_t=dW_t\), then
\[ d(X_t+\lambda t^2)=(2\lambda t - \bcancel{\lambda}) dt + 1 (dW_t+\bcancel{\lambda}dt).\]
Since we are working in dimension one with a non-vanishing diffusion coefficient, any Girsanov transformation is a quasi-Doob transformation. We first deal with the finite setting, and we exhibit a function \( k_\lambda\) such that \( h_\lambda=\sigma (k_\lambda)_x\). By integrating, we obtain
\[ k_\lambda=\int \frac{h_\lambda}{\sigma}dx=\int -\lambda dx = -\lambda x + g(\lambda,t).\]
We find the constant of integration $g$ by requiring Doob condition (Proposition \ref{caratterizzazione finita d e qd}) to hold
\[ \partial_t g(\lambda,t)=L_t(k_\lambda)=-\frac{1}{2}h_\lambda^2=-\frac{\lambda^2}{2}\]
from which we obtain
\begin{equation}\label{klambda ex2}
    k_\lambda=-\lambda x - \frac{\lambda^2}{2}t.
\end{equation}
In the same way, from the infinitesimal point of view, since we are working in one dimension with a unit diffusion coefficient, \( V_2\) is a quasi-Doob infinitesimal transformation, and there exists a function \( K\) such that \( H=\sigma K_x\). By integrating, we obtain
\[ K=\int \frac{H}{\sigma}dx=\int -1 dx= -x.\]
We can now construct \( K^\lambda\) according to formula \eqref{Klambda dim inf}
\[ K^\lambda = \int \frac{\sigma}{E_{T_\lambda}(\sigma)} K_x dx = \int -1 dx = -x + c(\lambda,t).\]
We find the constant of integration \( c\) by requiring the Doob infinitesimal condition (Proposition \ref{caratterizzazione doob e qd infin}) to hold
\[ 0= E_{T_\lambda}(L_t)[K^\lambda]=(\partial_t+E_{T_\lambda}(\mu)\partial_x + \frac{1}{2} E_{T_\lambda}(\sigma)^2\partial_{xx})[-x + c(\lambda,t)]= \partial_t c -(2\lambda t-\lambda),\]
from which we obtain \( c(\lambda,t)=\lambda t^2-\lambda t\) and
\begin{equation}\label{Kgrandelambda ex2}
    K^\lambda=-x+\lambda t^2 - \lambda t.
\end{equation}
The transformation is well-posed, as it is easy to verify that both \( K=\partial_\lambda (k_\lambda)_{\lambda=0}\) and \( k_\lambda = \int K^\lambda \circ \tilde{\Phi}_\lambda d\lambda\) hold.\\
\\
Corollary \ref{doob kbe fin} (using \eqref{klambda ex2}) or, equivalently, Theorem \ref{doob kbe inf} (using \eqref{Kgrandelambda ex2}) ensure that 
the transformation corresponding to \eqref{Tex2} maps the heat equation \eqref{example heat} into
\begin{equation}\label{heat trasf2}
    v_{\hat{t}}+(2\lambda t - \lambda) v_{\hat{x}}+\frac{1}{2} v_{\hat{x}\hat{x}}=0,
\end{equation}
where
\begin{equation}\label{vtrasf} v(\hat{x},\hat{t})=\theta_\lambda(x,t) u(x,t)= e^{-k_\lambda(x,t)}u(x,t)= e^{-k_\lambda\circ \Phi_\lambda^{-1}(\hat{x},\hat{t})}u\circ\Phi_\lambda^{-1}(\hat{x},\hat{t})=e^{\lambda \hat{x} - \lambda^2 \hat{t}^2 + \frac{\lambda^2}{2}t}u(\hat{x}-\lambda \hat{t}^2,\hat{t}).\end{equation}
A direct computation confirms the theorem: if \eqref{example heat} holds, then \eqref{vtrasf} solves \eqref{heat trasf2}.

\section{Symmetry theory}\label{section symmetries}
In this section, we investigate a specific subclass of transformations, namely symmetries. We introduce their finite and infinitesimal definitions and characterizations, primarily focusing on the connection between the symmetries of an SDE and those of its associated KBE. Subsequently, we outline a nearly algorithmic procedure to derive the symmetries of a general SDE (along with its related KBE) starting from Brownian motion. Finally, we study in detail the Doob-type symmetries of Brownian motion and the related symmetries of the corresponding heat equation.

\subsection{Relation between symmetries of SDEs and KBEs}\label{section relazione sym}
Symmetries represent a special class of transformations that leave the original equation invariant. These transformations are of fundamental importance, as they allow for the study of invariance properties related to the solutions and can sometimes enable a dimensional reduction of the equations themselves. For a comprehensive treatment, we refer the reader to \cite{Olver} for the deterministic setting, and to \cite{paper2015,gaetarodriguez} for the stochastic framework.

\begin{definizione}[Finite and infinitesimal symmetry] \label{def sym}
A stochastic transformation $T$ is a \emph{(finite weak) symmetry} of a given $SDE_{\mu,\sigma}$ if, for every solution process $(X,W)$, $P_T(X,W)$ is a solution process to the same SDE.
An infinitesimal stochastic transformation $V$ generating a one-parameter group $T_\lambda$ is called an \emph{infinitesimal symmetry} of a given $SDE_{\mu,\sigma}$ if $T_\lambda$ is a symmetry of $SDE_{\mu,\sigma}$.
\end{definizione}

Symmetries can be characterized through necessary and sufficient conditions commonly referred to as determining equations (finite and infinitesimal, respectively). In the stochastic framework, one of the main reasons for shifting from the structure of finite transformations to the infinitesimal structure of the corresponding Lie algebra (see Section \ref{section stochastic transformations}) is that infinitesimal determining equations are much easier to handle than their finite counterparts (see, for instance, \cite{proceeding}).

\begin{proposizione}\label{prop:pra}
A stochastic transformation $T=(\tilde{\Phi},h)$, with \(\tilde{\Phi}=(\Phi^f,f)\), is a symmetry of $SDE_{\mu,\sigma}$ if and only if
\begin{align}\label{eqdet finite}
\left( \frac{1}{f'} [L_t(\Phi^f)+(\Phi^f)_x\sigma{h}] \right) \circ\tilde\Phi^{-1} &=\mu \notag \\
\left( \frac{1}{\sqrt{f'}} (\Phi^f)_x \sigma \right) \circ\tilde\Phi^{-1} &=\sigma 
\end{align}
An infinitesimal stochastic transformation $V=(\tilde{Y},H)$, where $\tilde Y=Y\partial_x+m\partial_t$ is an infinitesimal symmetry of $SDE_{\mu,\sigma}$ if and only if $V$ generates a one-parameter group and the following equations hold:
\begin{align}
\label{equa:eqa}
Y(\mu)-L_t(Y)-\sigma{H}+m'\mu&=0 \\
\label{equa:eqb}
[Y,\sigma]+\frac{1}{2}m'\sigma&=0,
\end{align}
where \( [Y,\sigma]= Y \partial_x \sigma - \sigma \partial_x Y.\)
\end{proposizione}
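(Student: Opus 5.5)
The finite part follows from Theorem \ref{trasformate}. Let $(X,W)$ solve $SDE_{\mu,\sigma}$. Then $P_T(X,W)$ solves $SDE_{E_T(\mu),E_T(\sigma)}$, with $P_T(W)$ a Brownian motion under $\mathbb{Q}$ and the filtration $\mathcal{F}'_t$. So if $E_T(\mu)=\mu$ and $E_T(\sigma)=\sigma$, which by \eqref{ET} are exactly the two equations \eqref{eqdet finite}, then $T$ is a symmetry in the sense of Definition \ref{def sym}.

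For the converse, suppose $P_T(X,W)$ solves $SDE_{\mu,\sigma}$. It also solves $SDE_{E_T(\mu),E_T(\sigma)}$, so
\[
\int_0^t \bigl(E_T(\mu)-\mu\bigr)(Y_s,s)\,ds+\int_0^t\bigl(E_T(\sigma)-\sigma\bigr)(Y_s,s)\,dW'_s=0 ,
\]
where $(Y,W')=P_T(X,W)$. By uniqueness of the canonical semimartingale decomposition, the martingale part vanishes, which gives $E_T(\sigma)=\sigma$ along paths. The bounded-variation part then gives $E_T(\mu)=\mu$ along paths. To pass from equality along paths to equality as functions, I would use that the statement quantifies over every solution, hence over every initial datum $(x,t)$. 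Continuity of the coefficients and the support of the law of $Y_s$ near $s=t$ then give the identities pointwise. This is the same uniqueness-of-decomposition argument used in Proposition \ref{caratterizzazione finita d e qd}.

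For the infinitesimal part, apply the finite criterion to the one-parameter group $T_\lambda=(\tilde\Phi_\lambda,h_\lambda)$ generated by $V$ (Theorem \ref{teo ricostruzione del flusso}). Rewrite \eqref{eqdet finite} without composing with $\tilde\Phi^{-1}$:
\[
L_t(\Phi^{f_\lambda}_\lambda)+(\Phi^{f_\lambda}_\lambda)_x\sigma h_\lambda=f'_\lambda\,\mu\circ\tilde\Phi_\lambda ,
\qquad
(\Phi^{f_\lambda}_\lambda)_x\sigma=\sqrt{f'_\lambda}\,\sigma\circ\tilde\Phi_\lambda .
\]
Differentiating at $\lambda=0$, using $\Phi_0=\mathrm{Id}$, $f_0=\mathrm{id}$ and $h_0=0$, gives
\[
L_t(Y)+\sigma H=m'\mu+Y\mu_x+m\mu_t ,
\qquad
\sigma Y_x=\tfrac12 m'\sigma+Y\sigma_x+m\sigma_t .
\]
These are \eqref{equa:eqa} and \eqref{equa:eqb}, once $Y(\mu)$ and $[Y,\sigma]$ are read as the action of the full vector field $\tilde Y=Y\partial_x+m\partial_t$. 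I will fix that convention explicitly, since the $m\partial_t$ terms must be included.

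The converse, from the infinitesimal equations back to the finite group, is the main obstacle. I would handle it with the group property. By Remark \ref{composizione ben def}, $E_{T_{\lambda+\epsilon}}=E_{T_\epsilon}\circ E_{T_\lambda}$, so $\lambda\mapsto(\mu_\lambda,\sigma_\lambda):=E_{T_\lambda}(\mu,\sigma)$ satisfies an autonomous first-order linear system, obtained by differentiating at $\epsilon=0$. Its right-hand side is given by the left-hand sides of \eqref{equa:eqa} and \eqref{equa:eqb} evaluated at $(\mu_\lambda,\sigma_\lambda)$, with $V$ transported by the flow. If $(\mu,\sigma)$ satisfies the determining equations, then the constant curve $(\mu,\sigma)$ is a stationary solution. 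Uniqueness for this transport-type system (characteristics of $\tilde Y$, with $H$ entering linearly) then forces $E_{T_\lambda}(\mu,\sigma)=(\mu,\sigma)$ for all $\lambda$. The finite part then shows that each $T_\lambda$ is a symmetry. The hypothesis that $V$ generates a global one-parameter group is used exactly here, and the delicate point is checking that the derivative at general $\lambda$ reduces to the same expression as at $\lambda=0$.
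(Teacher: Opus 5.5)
Your proposal is correct. The paper gives no argument of its own here; it only refers to \cite{paper2020}. Your text therefore works as a self-contained proof along the standard lines:
\begin{itemize}
\item the finite criterion is Theorem~\ref{trasformate} plus uniqueness of the semimartingale decomposition, which is exactly the content of Remark~\ref{remark eqdetfin};
\item necessity of \eqref{equa:eqa}--\eqref{equa:eqb} comes from differentiating the un-composed finite equations at $\lambda=0$;
\item sufficiency comes from the group law of Remark~\ref{composizione ben def}.
\end{itemize}
You are right that $Y(\mu)$ and $[Y,\sigma]$ must be read as the action of the full field $\tilde Y=Y\partial_x+m\partial_t$ when the coefficients depend on time. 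This matches the KBE determining equations in Section~\ref{section relazione sym}, which are written with $\Xi(\mu)$ and $\Xi(\sigma^2)$ and reduce to yours under $H=\sigma K_x$ and $m=m(t)$.

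Some refinements:
\begin{itemize}
\item The step you flag as delicate is not. With $T_{\lambda+\epsilon}=T_\epsilon\circ T_\lambda$, the $\epsilon$-derivative at $0$ involves $V$ itself, not a transported field. In any case $(T_\lambda)_*V=V$, since $T_\lambda\circ T_\epsilon\circ T_\lambda^{-1}=T_\epsilon$.
\item It is even simpler to compose the other way, $E_{T_{\lambda+\epsilon}}=E_{T_\lambda}\circ E_{T_\epsilon}$. By \eqref{ET}, $E_{T_\lambda}(\mu,\sigma)\circ\tilde\Phi_\lambda$ depends polynomially on the pointwise values of $(\mu,\sigma)$, with no derivatives of the coefficients. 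The chain rule then gives $\partial_\lambda E_{T_\lambda}(\mu,\sigma)=0$ as soon as $\partial_\epsilon E_{T_\epsilon}(\mu,\sigma)|_{\epsilon=0}$ vanishes. That derivative is exactly minus the left-hand sides of \eqref{equa:eqa}--\eqref{equa:eqb}, so no uniqueness theorem for a transport system is needed.
\item If you keep your version, note that the system is triangular rather than linear: $\sigma_\lambda$ enters the $\mu$-equation through $\tfrac12\sigma_\lambda^2Y_{xx}$. Triangularity still yields uniqueness along the characteristics of $\tilde Y$.
\item Solutions in this paper start at time $0$. So the passage from ``along paths'' to pointwise identities at $(y,s)$ with $s>0$ should rest on the full support of the law of $Y_s$ (nondegeneracy of $\sigma$) together with continuity, not on initial data at arbitrary times.
\end{itemize}
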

\begin{proof}
    See \cite{paper2020}.
\end{proof}
\begin{osservazione}\label{remark eqdetfin}
    Finite determining equations \eqref{eqdet finite} simply state that \( T\) is a symmetry if and only if \( E_T(\mu)=\mu\) and \( E_T(\sigma)=\sigma.\)
\end{osservazione}
As with general transformations, we can also introduce the notion of Doob and quasi-Doob symmetries, building upon the results established in Section \ref{section_3} for the finite case and in Section \ref{section stochastic transformations} for the infinitesimal one.

\begin{definizione}
    A finite symmetry $T$ is said to be of Doob (respectively, quasi-Doob) type if it also satisfies the requirements of Definition \ref{def doob e qd}. 
    
    Similarly, an infinitesimal symmetry $V$ is said to be of Doob (respectively, quasi-Doob) type if it satisfies the requirements of Definition \ref{def doob inf}.
\end{definizione}

While the conditions characterizing a finite Doob-type symmetry remain those described in Proposition \ref{caratterizzazione finita d e qd}, in the infinitesimal case, Proposition \ref{caratterizzazione doob e qd infin} simplifies significantly when restricted to symmetries.

\begin{proposizione}\label{caratterizzazione sym doob infin}
    An infinitesimal symmetry $V=(\tilde{Y},H)$ is of Doob-type if and only if there exists a function $K$ such that
    \begin{itemize}
        \item[i)] $H=\sigma K_x$,
        \item[ii)] $L_t(K)=0$.
    \end{itemize}
    Similarly, an infinitesimal symmetry $V=(\tilde{Y},H)$ is of quasi-Doob-type if and only if there exists a function $K$ such that condition $i)$ holds.
\end{proposizione}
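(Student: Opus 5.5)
The plan is to specialize Proposition \ref{caratterizzazione doob e qd infin} to the case in which $V$ is a symmetry. That proposition says $V$ is of Doob type if and only if $H=\sigma K_x$ and $E_{T_\lambda}(L_t)[K^\lambda]=0$ for every $\lambda$, where $K^\lambda=\int \frac{\sigma}{E_{T_\lambda}(\sigma)}K_x\,dx$. The whole argument rests on one observation: when $V$ is an infinitesimal symmetry, each $T_\lambda$ in the one-parameter group is a finite symmetry. By Remark \ref{remark eqdetfin} (the finite determining equations \eqref{eqdet finite}), this gives $E_{T_\lambda}(\mu)=\mu$ and $E_{T_\lambda}(\sigma)=\sigma$ for all $\lambda$. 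Hence the transformed generator is $E_{T_\lambda}(L_t)=L_t$, independent of $\lambda$.

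The steps are as follows.

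\emph{Step 1.} Since $E_{T_\lambda}(\sigma)=\sigma$, the ratio $\sigma/E_{T_\lambda}(\sigma)$ equals $1$, so $K^\lambda=\int K_x\,dx=K+c(\lambda,t)$. Here $c(\lambda,t)$ is the integration function that the remark following Proposition \ref{caratterizzazione doob e qd infin} leaves free, to be fixed by condition ii).

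\emph{Step 2.} Condition ii) of Proposition \ref{caratterizzazione doob e qd infin} becomes $L_t(K)+\partial_t c(\lambda,t)=0$ for all $\lambda$.

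\emph{Step 3, direction ``Doob $\Rightarrow$ $L_t(K)=0$''.} Condition ii) yields $L_t(K)=-\partial_t c(\lambda,t)$. The left-hand side does not depend on $\lambda$, and the right-hand side depends only on $(\lambda,t)$, so $L_t(K)=:-\gamma'(t)$ is a function of $t$ alone. Replacing $K$ by $\bar K:=K+\gamma(t)$ leaves $H=\sigma\bar K_x$ unchanged and gives $L_t(\bar K)=0$. So i) and ii) hold with $\bar K$ in place of $K$.

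\emph{Step 3, converse.} Given $K$ with $H=\sigma K_x$ and $L_t(K)=0$, choose $c\equiv 0$. Then $K^\lambda=K$, and $E_{T_\lambda}(L_t)[K^\lambda]=L_t(K)=0$ for every $\lambda$. Proposition \ref{caratterizzazione doob e qd infin} then says $V$ is of Doob type.

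\emph{Step 4.} The quasi-Doob statement needs no symmetry input. By Proposition \ref{caratterizzazione doob e qd infin}, quasi-Doob type is equivalent to condition i) alone, and i) can always be met because $\sigma\neq0$, so one may take $K=\int H/\sigma\,dx$.

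The main obstacle is justifying in Step 1 that each $T_\lambda$ is a genuine finite symmetry, so that the transformed generator really equals $L_t$ for all $\lambda$ and not just at $\lambda=0$. This follows from Definition \ref{def sym}: an infinitesimal symmetry is defined by the requirement that the generated one-parameter group $T_\lambda$ consists of symmetries, and Remark \ref{remark eqdetfin} then applies to each $T_\lambda$. The remaining subtle point is the bookkeeping of the integration function $c(\lambda,t)$, that is, absorbing a purely $t$-dependent term into $K$. This is harmless because, as noted in Remark \ref{oss one to one}, such a term does not change $H$.
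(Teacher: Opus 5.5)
Your proposal is correct and follows the paper's own route. Like the paper, you specialize Proposition~\ref{caratterizzazione doob e qd infin} using the fact that, for a symmetry, every $T_\lambda$ satisfies $E_{T_\lambda}(\sigma)=\sigma$ and $E_{T_\lambda}(L_t)=L_t$; your explicit treatment of the integration function $c(\lambda,t)$, where you show $L_t(K)$ depends only on $t$ and absorb it into $K$ without changing $H$, is a detail the paper leaves implicit.
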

\begin{proof}
    The proof is analogous to that of Proposition \ref{caratterizzazione doob e qd infin}, recalling that, since $V$ is a symmetry, $E_{T_\lambda}(\sigma)=\sigma$ and $E_{T_\lambda}(L_t)=L_t$. 
\end{proof}

The same geometric and algebraic approach can be applied to the study of KBEs associated with  diffusion processes. A Lie point symmetry for equation \eqref{kbe a} is defined as a one-parameter local group of invertible transformations of the space of independent and dependent variables 
that maps solutions into solutions.

Infinitesimally, this group is generated by a vector field $\Xi$ of the form:
\begin{equation} \label{eq:lie_generator}
    \Xi = m(x,t) \frac{\partial}{\partial t} +  Y(x, t) \frac{\partial}{\partial x} + \psi(x, t, u) \frac{\partial}{\partial u}
\end{equation}
where $m$, $Y$, and $\psi$ are smooth functions of their arguments. The necessary and sufficient condition for $\Xi$ to be the generator of a Lie point symmetry is that its second prolongation $\text{pr}^{(2)}\Xi$ applied to  the equation vanishes on its solution manifold:
\begin{equation} \label{eq:symmetry_condition}
    \left. \text{pr}^{(2)}\Xi (L_t[u]) \right|_{L_t[u]=0} = 0.
\end{equation}

By writing out condition \eqref{eq:symmetry_condition}, one can provide explicit determining equations for the vector field $\Xi$ to be a symmetry of the KBE \eqref{kbe a}:
\begin{align}\label{eq:kbe_determining}
    \psi(x,t,u) &= -K(x,t)u + \Psi(x,t) \notag \\
    L_t(\Psi) &= 0 \notag \\
    L_t(K) &= 0 \notag \\
    L_t(Y) - \Xi(\mu) + \sigma^2 K_x - L_t(m)\mu &= 0 \\
    L_t(m)\sigma^2 + \Xi(\sigma^2) - 2 \sigma^2 Y_x &= 0 \notag \\
    \sigma^2 m_x &= 0 \notag
\end{align}

Under the non-vanishing hypothesis imposed on $\sigma$ throughout the paper (see Section \ref{sezione lamperti}), the last condition implies that $m=m(t)$ does not depend on the spatial variable $x$. We also remark that $\Psi$ only appears in the first two conditions and corresponds to the trivial symmetry $\Psi \partial_u$, reflecting the superposition principle for linear PDEs. Consequently, in the following, we will often omit $\Psi$ and condition $L_t(\Psi) = 0$.\\
\\

Building on the connection between SDEs and second-order linear parabolic PDEs investigated in Section \ref{Section sde kbe}, the study of infinitesimal symmetries of SDEs proves to be a powerful tool for also determining the Lie-point symmetries of the associated PDEs. In \cite{paper2020}, a theorem established an equivalence between the symmetries of the KBE and those of the corresponding SDE. For the sake of completeness, we recall this theorem in the specific case of a one-dimensional SDE and its corresponding KBE:

\begin{teorema}\label{doob kbe}
    Let $ V=(\tilde{Y}, H)$ be an infinitesimal symmetry of Doob-type for the SDE \eqref{SDE}, with $ H=\sigma K_x$ and \( \tilde{Y}=(Y,m)\). Then, the vector field  
    \begin{equation} \label{vector_field}
    \Xi=m\partial_t+Y\partial_x-K u \partial_u    
    \end{equation} is a Lie point infinitesimal symmetry for the related KBE \eqref{kbe a}.\\
    Conversely, if \eqref{vector_field} is a Lie point infinitesimal symmetry for the KBE \eqref{kbe a}, then $ V=(\tilde{Y},H)$, with $ H=\sigma K_x$ and \( \tilde{Y}=(Y,m)\), is a Doob infinitesimal symmetry for the related SDE \eqref{SDE}.
\end{teorema}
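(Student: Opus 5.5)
The plan is to compare, term by term, the infinitesimal determining equations of the SDE (Proposition \ref{prop:pra}, equations \eqref{equa:eqa}--\eqref{equa:eqb}) with the determining system \eqref{eq:kbe_determining} of the KBE, with $\psi=-Ku$ and $\Psi=0$. Proposition \ref{caratterizzazione sym doob infin} supplies the two Doob conditions, $H=\sigma K_x$ and $L_t(K)=0$. The Doob requirement should then reproduce exactly the condition $L_t(K)=0$ of the KBE system. The remaining conditions should match once the SDE equations are rewritten in the $(\mu,\sigma^2)$ language of the KBE.

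First I reduce to $m=m(t)$. On the SDE side this holds by definition, since $f_\lambda$ depends only on $t$. On the KBE side it follows from $\sigma^2 m_x=0$ and $\sigma\neq 0$. Then $L_t(m)=m'$, and I read $\Xi(\mu)$ as $Y\mu_x+m\mu_t$, which is the action of $\tilde Y$ (in \eqref{equa:eqa} the term $Y(\mu)$ likewise means $\tilde Y(\mu)$).

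Next I match the diffusion equations. Multiplying \eqref{equa:eqb} by $2\sigma$ gives $2\sigma\tilde Y(\sigma)-2\sigma^2Y_x+m'\sigma^2=0$. Since $\Xi(\sigma^2)=2\sigma\tilde Y(\sigma)$, this is exactly $L_t(m)\sigma^2+\Xi(\sigma^2)-2\sigma^2Y_x=0$, and dividing by $2\sigma\neq0$ reverses the step.

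Then I match the drift equations. Substituting $H=\sigma K_x$ into \eqref{equa:eqa} gives $\tilde Y(\mu)-L_t(Y)-\sigma^2K_x+m'\mu=0$. This is, up to an overall sign, the fourth line of \eqref{eq:kbe_determining}: $L_t(Y)-\Xi(\mu)+\sigma^2K_x-m'\mu=0$. The relation $H=\sigma K_x$ in one direction, and its definition of $H$ in the other, is what makes the two equations correspond.

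For the direct implication, I take a Doob symmetry $V$. Proposition \ref{caratterizzazione sym doob infin} gives $K$ with $H=\sigma K_x$ and $L_t(K)=0$, and the two matchings above show that $\Xi$ satisfies every line of \eqref{eq:kbe_determining}. For the converse, I start from a KBE symmetry $\Xi$, define $H=\sigma K_x$, and read the same computations backwards to get \eqref{equa:eqa}--\eqref{equa:eqb}. The line $L_t(K)=0$ of the KBE system is precisely condition ii) of Proposition \ref{caratterizzazione sym doob infin}, so $V$ is a symmetry of Doob type.

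The main obstacle is the hypothesis in Proposition \ref{prop:pra} that $V$ must generate a one-parameter group. I would handle it through Theorem \ref{teo ricostruzione del flusso}, which at least locally builds the flow $T_\lambda$ from $(\tilde Y,H)$. Because the infinitesimal equations are linear in the generator and are invariant along the flow, $T_\lambda$ is then a symmetry. Alternatively, as a cross-check, one can invoke Theorem \ref{doob kbe inf} with $E_{T_\lambda}=\mathrm{Id}$, where $K^\lambda=K$ and the condition $E_{T_\lambda}(L_t)[K^\lambda]=0$ reduces to $L_t(K)=0$.
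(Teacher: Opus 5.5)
Your proposal is correct, and it takes a different route from the paper. The paper does no computation for this statement. It recalls the result from \cite{paper2020} and remarks that it is the special case of Theorem~\ref{doob kbe inf} in which the transformations are symmetries. There $E_{T_\lambda}(\mu)=\mu$, $E_{T_\lambda}(\sigma)=\sigma$ and $K^\lambda=K$, so the condition $E_{T_\lambda}(L_t)[K^\lambda]=0$ collapses to $L_t(K)=0$. Theorem~\ref{doob kbe inf} in turn rests on the finite correspondence of Theorem~\ref{teo quasi doob} and Corollary~\ref{doob kbe fin}, plus flow reconstruction. That correspondence matches transformed coefficients, which gives $h=-\sigma\theta_x/\theta$ and $k=-\ln|\theta|$. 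This is essentially your ``cross-check''.

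Your main argument instead matches the two infinitesimal determining systems line by line:
\begin{itemize}
\item multiplying \eqref{equa:eqb} by $2\sigma$ gives the fifth line of \eqref{eq:kbe_determining};
\item \eqref{equa:eqa} with $H=\sigma K_x$ is minus the fourth line;
\item $m_x=0$ is automatic on the SDE side;
\item the Doob condition of Proposition~\ref{caratterizzazione sym doob infin} is literally the line $L_t(K)=0$.
\end{itemize}
This is more elementary and self-contained: no $K^\lambda$, no finite transformations, no Feynman--Kac. It also shows exactly which equation corresponds to which. The price is that it depends on the KBE system \eqref{eq:kbe_determining}, which the paper states without derivation. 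It also depends on reading $Y(\mu)$ and $[Y,\sigma]$ in Proposition~\ref{prop:pra} with $\tilde Y=Y\partial_x+m\partial_t$. Otherwise the terms $m\mu_t$ and $m\sigma_t$ are lost for time-dependent coefficients; you read them correctly. The paper's route buys the embedding into the correspondence for arbitrary transformations, so the quasi-Doob analogue (Corollary~\ref{corollario quasi doob}) and the Bayes-rule interpretation come along.

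Two minor remarks. First, you rightly take $K$ from Proposition~\ref{caratterizzazione sym doob infin}, i.e.\ normalized by $L_t(K)=0$. Replacing it by $K+c(t)$ would make the prolonged condition equal $-u\,c'(t)$, which is nonzero unless $c$ is constant. Second, the group-generation hypothesis in Proposition~\ref{prop:pra} needs no ``invariance along the flow'' argument. The $(x,t)$-part of the flow of $\Xi$ is the flow of $\tilde Y$, and $h_\lambda$ then follows locally by quadrature from \eqref{ricostruzione del flusso}. This is the same level of rigor as the paper.
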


This theorem constitutes a significant result and, together with the results obtained in Section \ref{Section sde kbe}, clarifies once again the importance of considering Doob transformations among the stochastic transformations under study. Indeed, if admissible transformations for an SDE are restricted to space-time diffeomorphisms \((x,t)\mapsto (\Phi(x,f(t)), f(t))\), then the symmetry group of an SDE generally proves to be a proper subset of the symmetry group of the corresponding associated KBE (see \cite{simmetriekolmogorov}, \cite{gaetarodriguez1}). On the other hand, if generic Girsanov measure changes are included among the admissible transformations, then the symmetries of a generic SDE become infinite-dimensional; in this case, they strictly contain the symmetry group of the associated KBE as special cases. If a full correspondence between the symmetries of SDEs and their associated KBEs is desired, the correct notion to consider is the specialization of Girsanov transformations to measure changes of the Doob type. Indeed, if we adopt the following notations
\begin{align*}
    \mathrm{Sym}_{(\Phi,f)}(\text{SDE}) &:= \{ T \mid T \text{ is a symmetry acting via } \tilde{\Phi}=(\Phi^f,f) \}, \\
    \mathrm{Sym}_{\text{Doob}}(\text{SDE}) &:= \{ T = (\tilde{\Phi}, h_D) \mid h_D \text{ is a Doob-type measure change} \}, \\
    \mathrm{Sym}_{\text{Gir}}(\text{SDE}) &:= \{ T = (\tilde{\Phi}, h_G) \mid h_G \text{ is a Girsanov-type measure change} \}, \\
    \mathrm{Sym}(\text{KBE}) &:= \{ \text{symmetries of the associated KBE} \},
\end{align*}
we have the following chain of inclusions:
\[
    \mathrm{Sym}_{(\Phi,f)}(\text{SDE}) \subset \mathrm{Sym}_{\text{Doob}}(\text{SDE}) \cong \mathrm{Sym}(\text{KBE}) \subset \mathrm{Sym}_{\text{Gir}}(\text{SDE}).
\]

\begin{osservazione}
    Theorem \ref{doob kbe} can now be recovered as a particular case of Theorem \ref{doob kbe inf}, simply considering symmetries as special cases of general stochastic transformations. 
\end{osservazione}

We can now summarize the results obtained in Theorem \ref{doob kbe}, Corollary \ref{doob kbe fin}, Theorem \ref{doob kbe inf}, and Theorem \ref{teo quasi doob} in the following classification corollary regarding the link between the symmetries of SDEs and those of the corresponding KBEs, whose proof follows immediately from the previous results.
\begin{corollario}\label{corollario quasi doob}
   Consider the SDE \eqref{SDE}, where \( W_t\) is a \(\mathbb{P}-\)Brownian motion, and the related KBE \eqref{kbe a}.
   If \( \hat{T}=(\tilde{\Phi}, \theta)\), acting as
   \[ t \rightarrow \hat{t}=f(t), \quad x\rightarrow\hat{x}=\Phi^f(x,t), \quad u \rightarrow \hat{u}(\hat{x},\hat{t})=\theta(x,t)u(x,t)+\Psi(x,t)\]
  and subject to strictly admissibility requirements \( L_t(\frac{\Psi}{\theta})=0\) and \(L_t(\theta)=\sigma^2\frac{\theta^2_x}{\theta}\), is a finite symmetry of KBE \eqref{kbe a}, then \( T=(\tilde{\Phi},h)\), with \( h=-\sigma\frac{\theta_x}{\theta} \), is a Doob-type symmetry for SDE \eqref{SDE}. Conversely, if \( T=(\tilde{\Phi},h)\) is a finite symmetry for SDE \eqref{SDE}, then
  \begin{itemize}
      \item if \( T\) is of Doob-type, then \( \hat{T}=(\tilde{\Phi},\theta)\), with \( |\theta|=e^{- k}\), is a finite symmetry for KBE \eqref{kbe a};
      \item if \( T\) is a generic quasi-Doob symmetry, then \(\hat{T}=(\tilde{\Phi},\theta)\), with \(\ln|\theta|=-k\) and \( h=\sigma k_x\), acting as 
      \[t \rightarrow \hat{t}=f(t), \quad x\rightarrow\hat{x}=\Phi^f(x,t), \quad u \rightarrow \hat{u}(\hat{x},\hat{t})=\theta(x,t)u(x,t),\]
      is not a proper symmetry of KBE \eqref{kbe a}, but it maps KBE \eqref{kbe a} into 
      \[ \hat{u}_{\hat{t}}+\mu(\hat{x},\hat{t})\hat{u}_{\hat{x}}+\frac{1}{2}\sigma^2(\hat{x},\hat{t})\hat{u}_{\hat{x}\hat{x}}+V(\hat{x},\hat{t})\hat{u}=0,\]
      where \( V(\hat{x},\hat{t})\) is given by
      \[ V(\hat{x},\hat{t})=\frac{1}{\theta f'}\big(\sigma^2 \frac{\theta_x^2}{\theta}-L_t(\theta)\big)\circ \tilde{\Phi}^{-1}.\] \end{itemize}
      In the same way, from the infinitesimal point of view, if  \( \ \Xi=m(t)\partial_t+Y(x,t)\partial_x-K(x,t)u(x,t)\partial_u\) is a Lie point infinitesimal symmetry of KBE \eqref{kbe a}, then \( V=(\tilde{Y},H)\), with \( \tilde{Y}=(Y,m)\) and \( H=\sigma K_x\), is an infinitesimal symmetry of SDE \eqref{SDE}. Conversely, if \( V=(\tilde{Y},H)\), with \( \tilde{Y}=(Y,m)\), is an infinitesimal symmetry of SDE \eqref{SDE}, then
      \begin{itemize}
          \item if \( V\) is of Doob-type, then \( \Xi=m(t)\partial_t+Y(x,t)\partial_x-K(x,t)u(x,t)\partial_u\) is a Lie point infinitesimal symmetry of KBE \eqref{kbe a};
          \item If \( V\) is a generic quasi-Doob infinitesimal symmetry, then \( \Xi=m(t)\partial_t+Y(x,t)\partial_x-K(x,t)u(x,t)\partial_u\) is not a proper symmetry of KBE \eqref{kbe a} in the sense of condition \eqref{eq:symmetry_condition}, but \(\left. \text{pr}^{(2)}\Xi (L_t[u]) \right|_{L_t[u]=0} = -u L_t(K).\)
      \end{itemize}
\end{corollario}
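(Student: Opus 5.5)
The plan is to obtain Corollary \ref{corollario quasi doob} by specializing the transformation correspondences of Section \ref{Section sde kbe} to the case where the transformed equation coincides with the original one. This gives the finite statements. The infinitesimal statements then follow from Theorem \ref{doob kbe} and from one direct computation of the prolonged vector field.

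\textbf{Finite part, KBE to SDE.} Let \(\hat{T}=(\tilde{\Phi},\theta)\) be strictly admissible and a finite symmetry of \eqref{kbe a}. Then the transformed coefficients in \eqref{coefficienti trasformati pde} satisfy \(V=0\), \(\hat{\mu}=\mu\) and \(\hat{a}=\frac12\sigma^2\). By the proof of Theorem \ref{teo quasi doob}, matching coefficients gives \(h=-\sigma\theta_x/\theta\), so that \(\mu'=\hat{\mu}=\mu\) and \(\frac12\sigma'^2=\hat a=\frac12\sigma^2\). The sign of \(\sigma'\) agrees with that of \(\sigma\), since \(\Phi^f_x\) keeps a constant sign and \(\sigma>0\) is preserved; this fixes \(\sigma'=\sigma\). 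Hence \(E_T(\mu)=\mu\) and \(E_T(\sigma)=\sigma\), and Proposition \ref{prop:pra} together with Remark \ref{remark eqdetfin} shows that \(T=(\tilde\Phi,h)\) is a symmetry. The condition \(L_t(\theta)=\sigma^2\theta_x^2/\theta\) rewrites, via \(k=-\ln|\theta|\), as \(L_t(k)=-\frac12h^2\), exactly as in Corollary \ref{doob kbe fin}. By Proposition \ref{caratterizzazione finita d e qd}, \(T\) is therefore of Doob type.

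\textbf{Finite part, SDE to KBE.} Let \(T\) be a finite symmetry, so that \(E_T(\mu)=\mu\) and \(E_T(\sigma)=\sigma\). Take \(|\theta|=e^{-k}\) with \(h=\sigma k_x\). By Theorem \ref{teo quasi doob}, \(\hat{T}\) maps \eqref{kbe a} into an equation with coefficients \(\hat{\mu}=\mu\) and \(\hat a=\frac12\sigma^2\), plus the potential \(V\) of \eqref{coefficienti trasformati pde}.
\begin{itemize}
\item In the Doob case, \(V=0\) by Corollary \ref{doob kbe fin}, so \(\hat{T}\) is a genuine symmetry.
\item In the generic quasi-Doob case, the same computation gives the stated equation with potential \(V\).
\end{itemize}

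\textbf{Infinitesimal part.} Here I would invoke Theorem \ref{doob kbe} (or Theorem \ref{doob kbe inf} restricted to symmetries, where \(E_{T_\lambda}(L_t)=L_t\) and \(K^\lambda=K\), as in Proposition \ref{caratterizzazione sym doob infin}).
\begin{itemize}
\item Suppose \(\Xi\) is a Lie symmetry of the KBE. The determining equations \eqref{eq:kbe_determining} with \(H=\sigma K_x\) reproduce \eqref{equa:eqa}–\eqref{equa:eqb}, and they also give \(L_t(K)=0\). So \(V\) is a Doob symmetry.
\item The Doob converse is the converse part of Theorem \ref{doob kbe}.
\item For a quasi-Doob symmetry \(V\), I would compute \(\mathrm{pr}^{(2)}\Xi(L_t[u])\) with \(\psi=-Ku\) directly. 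Expanding \(\psi^t+\mu\psi^x+\frac12\sigma^2\psi^{xx}\) plus the coefficient-variation terms, all contributions involving \(u_t,u_x,u_{xx}\) cancel on \(L_t[u]=0\) by \eqref{equa:eqa}–\eqref{equa:eqb} with \(H=\sigma K_x\). The only term surviving is \(-uL_t(K)\).
\end{itemize}

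The main obstacle is this last bookkeeping. One has to check that the symmetry equations for \(V\) are exactly the \(x\)- and \(u_{xx}\)-order determining equations in \eqref{eq:kbe_determining}, so that only the zero-order term \(L_t(K)\) is left over. The finite steps are direct substitutions into already-proved formulas.
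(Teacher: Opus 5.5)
Your route is the paper's: the paper gives no separate proof and only says the corollary follows immediately from Theorem \ref{teo quasi doob}, Corollary \ref{doob kbe fin}, Theorem \ref{doob kbe inf} and Theorem \ref{doob kbe}. That is the specialization you carry out. Your explicit prolongation computation for the quasi-Doob bullet is correct, and it is more than the paper writes down. Writing $\mathrm{pr}^{(2)}\Xi=\mathrm{pr}\,v_Q+mD_t+YD_x$ with $Q=-Ku-mu_t-Yu_x$ and eliminating $u_t$ on $L_t[u]=0$:
the coefficient of $u_x$ is the left side of \eqref{equa:eqa} with $H=\sigma K_x$;
the coefficient of $u_{xx}$ is $\sigma$ times the left side of \eqref{equa:eqb};
the term $-uL_t(K)$ is what remains.
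When the coefficients depend on $t$, you must read $Y(\mu)$ and $[Y,\sigma]$ there with $\tilde Y=Y\partial_x+m\partial_t$, i.e. include $m\mu_t$ and $m\sigma_t$. The same identity also reproves Theorem \ref{doob kbe} in both directions, so your appeal to it is dispensable.

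One step fails as you justify it: the finite KBE-to-SDE direction. The condition $\hat a=\frac12\sigma^2$ only yields $E_T(\sigma)=\pm\sigma$. Moreover $E_T(\sigma)=(\Phi^f_x\sigma/\sqrt{f'})\circ\tilde\Phi^{-1}$ has the sign of $\Phi^f_x$. That $\Phi^f_x$ has constant sign does not make it positive, and ``$\sigma>0$ is preserved'' is exactly what has to be shown. A counterexample: for $u_t+\frac12u_{xx}=0$, the reflection $\hat x=-x$, $\hat t=t$, $\hat u=u$ is a strictly admissible finite symmetry with $\theta=1$. Yet $T=((-x,t),0)$ sends $(X,W)$ to $(-X,W)$, which solves $dY=-dW$ rather than $dY=dW$. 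So it is not a symmetry in the sense of Definition \ref{def sym}. You need the extra hypothesis $\Phi^f_x>0$, which holds automatically for transformations on a one-parameter group through the identity (the setting the paper tacitly assumes). Alternatively, you could admit $W\mapsto -W$ among stochastic transformations. The paper's statement carries the same unstated restriction; with that proviso your argument is complete.
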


\begin{osservazione}
   Recall that in dimension 1, under regularity assumptions on $\sigma$, any generic Girsanov transformation is also of quasi-Doob type; therefore, in this setting, Corollary \ref{corollario quasi doob} provides a complete classification. These results can be generalized to dimensions $d \geq 1$, but in this case, there may exist generic Girsanov transformations that are neither of Doob type nor of quasi-Doob type. This framework will be the subject of future research.
\end{osservazione}

\subsection{Moving Lie symmetries from Brownian motion }\label{section_2}
Having established a correspondence result between the symmetries of SDEs and KBEs, we can now focus solely on SDEs and adopt a purely probabilistic approach. In this section, we prove that, using the remodeling map introduced in Section \ref{sezione 1.3}, it is always possible to recover the infinitesimal symmetries of a generic $SDE_{\mu,\sigma}$ with a constant-sign diffusion coefficient $\sigma$ from those of standard Brownian motion. 

\begin{lemma}\label{lemma T*}
    Given $SDE_{\mu,\sigma}$, consider a stochastic transformation $T$ which maps it into a new equation $SDE_{\mu_1,\sigma_1} \coloneqq SDE_{E_T(\mu),E_T(\sigma)}$. If $V$ is an infinitesimal symmetry of $SDE_{\mu,\sigma}$, then the push-forward $T_*(V)$ is an infinitesimal symmetry of $SDE_{\mu_1,\sigma_1}$.
\end{lemma}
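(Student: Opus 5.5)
The plan is to work at the level of finite transformations and then differentiate, using the group structure of stochastic transformations together with Remark \ref{composizione ben def}. Let $T_\lambda$ denote the one-parameter group generated by $V$, obtained from Theorem \ref{teo ricostruzione del flusso}. By Definition \ref{def sym}, saying that $V$ is an infinitesimal symmetry of $SDE_{\mu,\sigma}$ means that each $T_\lambda$ is a finite symmetry. By Remark \ref{remark eqdetfin}, this is equivalent to
\[
E_{T_\lambda}(\mu,\sigma)=(\mu,\sigma)\quad\text{for all }\lambda .
\]
By the definition of push-forward given before \eqref{pushforward}, $T_*(V)$ is the generator of the conjugated group $S_\lambda:=T\circ T_\lambda\circ T^{-1}$. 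I first check that this really is a one-parameter group: using the composition law \eqref{T composizione} and the inverse \eqref{T inversa}, one gets $S_\lambda\circ S_\mu=T\circ T_{\lambda+\mu}\circ T^{-1}=S_{\lambda+\mu}$ and $S_0=T\circ T^{-1}=T_0$.

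The key step is to show that every $S_\lambda$ is a finite symmetry of $SDE_{\mu_1,\sigma_1}$. For coefficients, Remark \ref{composizione ben def} gives
\[
E_{S_\lambda}(\mu_1,\sigma_1)=E_T\big(E_{T_\lambda}\big(E_{T^{-1}}(\mu_1,\sigma_1)\big)\big).
\]
I will evaluate the three maps in turn:
\begin{itemize}
\item $E_{T^{-1}}(\mu_1,\sigma_1)=(\mu,\sigma)$, because $E_{T^{-1}}\circ E_T=E_{T^{-1}\circ T}=E_{T_0}=\mathrm{Id}$;
\item $E_{T_\lambda}(\mu,\sigma)=(\mu,\sigma)$, because $T_\lambda$ is a symmetry;
\item $E_T(\mu,\sigma)=(\mu_1,\sigma_1)$, by the definition of $(\mu_1,\sigma_1)$.
\end{itemize}
Hence $E_{S_\lambda}(\mu_1,\sigma_1)=(\mu_1,\sigma_1)$. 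Applying Remark \ref{remark eqdetfin} once more, now to $SDE_{\mu_1,\sigma_1}$, each $S_\lambda$ is a symmetry of the transformed equation.

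The same argument can be phrased on solutions, which is closer to Definition \ref{def sym}. Take any solution $(X_1,W_1)$ of $SDE_{\mu_1,\sigma_1}$. By Theorem \ref{trasformate}, $P_{T^{-1}}(X_1,W_1)$ solves $SDE_{\mu,\sigma}$. Applying $P_{T_\lambda}$ keeps it a solution of $SDE_{\mu,\sigma}$, and applying $P_T$ then gives a solution of $SDE_{\mu_1,\sigma_1}$. The identity $P_{S_\lambda}=P_T\circ P_{T_\lambda}\circ P_{T^{-1}}$ again comes from Remark \ref{composizione ben def}.

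The step I expect to need the most care is the hypothesis side rather than the algebra. Theorem \ref{trasformate} requires non-explosiveness, so I must check that the measure-change parts of $T^{-1}$ and of the conjugated group satisfy it. Following the paper's convention after Definition \ref{def trasf stoc}, I will either assume it implicitly or argue purely at the level of coefficients through $E_T$, which needs no non-explosiveness. A second, minor point is to confirm that the derivative at $\lambda=0$ of $S_\lambda$ coincides with the explicit formula \eqref{pushforward}. This is given in the paper by citation, so I will take it as known.

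As an alternative check, one could substitute \eqref{pushforward} directly into the infinitesimal determining equations \eqref{equa:eqa}--\eqref{equa:eqb} for $(\mu_1,\sigma_1)$ given by \eqref{ET}. That route is heavy on computation, so I would use it only as a fallback.
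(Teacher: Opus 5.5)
Your proposal is correct and follows essentially the same argument as the paper. Both conjugate the flow $T_\lambda$ by $T$, use $E_{T_2\circ T_1}=E_{T_2}\circ E_{T_1}$ to obtain $E_{T\circ T_\lambda\circ T^{-1}}(\mu_1,\sigma_1)=(\mu_1,\sigma_1)$, and conclude through the finite determining equations; your additional checks (the group property of the conjugated flow, the solution-level version, and the non-explosiveness caveat) only make explicit what the paper leaves implicit.
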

\begin{proof}
    Let $T_\lambda$ be the one-parameter group of finite transformations generated by $V$ via the flow reconstruction \eqref{ricostruzione del flusso}. By construction, $T_*(V)$ is the infinitesimal transformation related to the one-parameter group $T\circ T_\lambda \circ T^{-1}$ (see Section \ref{section stochastic transformations}). In view of Remark \ref{remark eqdetfin}, $T \circ T_\lambda \circ T^{-1}$ fulfills the finite determining equations \eqref{eqdet finite}, since, using Remark \ref{composizione ben def}, we have that 
    \[ E_{T\circ T_{\lambda}\circ T^{-1}}(\mu_1,\sigma_1)=E_T(E_{T_{\lambda}}(E_{T^{-1}}(\mu_1,\sigma_1)))=E_T(E_{T_{\lambda}}(\mu,\sigma))=E_T(\mu,\sigma)=(\mu_1,\sigma_1).\]
    Thus, $T_*(V)$ is the infinitesimal transformation associated with a one-parameter group of finite symmetries, meaning it is an infinitesimal symmetry in the sense of Definition \ref{def sym}.
\end{proof}
This result, in conjunction with the remodeling map introduced in Section \ref{sezione 1.3}, provides a highly efficient strategy for computing the symmetries of a generic $SDE_{\mu,\sigma}$ with a constant-sign diffusion coefficient. Indeed, rather than solving the associated determining equations directly, one can derive the symmetries of the target equation by applying the push-forward via the remodeling map to the symmetries of standard Brownian motion. Consequently, determining the complete symmetry group of standard Brownian motion becomes a step of fundamental importance, which is addressed in the following lemma:

\begin{lemma}
 The one-dimensional Brownian motion admits the following  infinite-dimensional family of symmetries $\ V_{\alpha, \beta}$ :
\begin{equation}\label{Valpha}
V_{\alpha, \beta}=\Bigg( 
    \left(\frac{1}{2}\alpha'(t)x+ \beta(t) \right)\partial_x + \alpha(t)\partial_t, \  -\frac{1}{2}\alpha''(t) x -\beta'(t) \Bigg),  
\end{equation}
depending on two arbitrary functions of time $\alpha$ and $\beta$.   
\end{lemma}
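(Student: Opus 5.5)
The plan is to use the infinitesimal determining equations of Proposition \ref{prop:pra} with $\mu=0$ and $\sigma=1$. Here $L_t=\partial_t+\frac12\partial_{xx}$, $m=m(t)$ and $V=(\tilde Y,H)$ with $\tilde Y=Y\partial_x+m\partial_t$. Since $\sigma$ is constant, $[Y,\sigma]=-Y_x$. Equation \eqref{equa:eqb} therefore reduces to
\[
-Y_x+\tfrac12 m'(t)=0 .
\]
This forces $Y$ to be affine in $x$. Setting $\alpha:=m$, integration gives $Y=\frac12\alpha'(t)x+\beta(t)$ for an arbitrary function $\beta(t)$.

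Next I would substitute this into \eqref{equa:eqa}. With $\mu=0$, the terms $Y(\mu)$ and $m'\mu$ vanish, so \eqref{equa:eqa} becomes $-L_t(Y)-H=0$, that is, $H=-L_t(Y)$. Because $Y$ is affine in $x$, we have $Y_{xx}=0$ and hence $L_t(Y)=Y_t=\frac12\alpha''(t)x+\beta'(t)$. This yields
\[
H=-\tfrac12\alpha''(t)x-\beta'(t),
\]
which is exactly the second component in \eqref{Valpha}.

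To finish, I would note that $\alpha$ and $\beta$ are arbitrary smooth functions and that every pair $(\alpha,\beta)$ satisfies both equations. The family $V_{\alpha,\beta}$ is therefore infinite-dimensional, and the computation also shows that it exhausts all infinitesimal symmetries of Brownian motion.

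The one point needing care is the hypothesis in Proposition \ref{prop:pra} that $V$ "generates a one-parameter group". I would handle it through Theorem \ref{teo ricostruzione del flusso}. The time equation $\partial_\lambda f_\lambda=\alpha(f_\lambda)$ is an autonomous ODE, and the space equation is linear in $x$ with time-dependent coefficients. Both can be solved, at least locally, for smooth $\alpha$ and $\beta$, and the $h_\lambda$ equation is then a quadrature. So the flow exists, possibly only locally in $\lambda$ when $\alpha$ grows fast, which is the standard caveat for Lie symmetries.

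Two further remarks would accompany the computation. First, $H=\sigma K_x$ with $K=-\frac14\alpha''x^2-\beta'x+c(t)$, so every $V_{\alpha,\beta}$ is of quasi-Doob type. Second, no Doob condition is being asserted here; Proposition \ref{caratterizzazione sym doob infin} will later select the finite-dimensional Doob subalgebra through the requirement $L_t(K)=0$.
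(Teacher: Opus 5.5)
Your proposal is correct and follows the paper's own route. The paper's proof simply says to solve the infinitesimal determining equations \eqref{equa:eqa}--\eqref{equa:eqb} with $\mu=0$, $\sigma=1$ and refers the details to another source; your steps $Y_x=\tfrac12 m'$, $Y=\tfrac12\alpha' x+\beta$ and $H=-L_t(Y)=-\tfrac12\alpha''x-\beta'$ carry out exactly that calculation, and your remarks on local existence of the flow and on the family being exhaustive are consistent extras.
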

\begin{proof}
    It is sufficient to solve infinitesimal determining equations \eqref{equa:eqa},\eqref{equa:eqb} with \( \mu=0\) and \(\sigma=1\). See Example 7.1 in \cite{paper2023}.
\end{proof}
\begin{osservazione}
The family of symmetries \eqref{Valpha} depends on two arbitrary functions of time, $\alpha$ and $\beta$. 
In particular, by reconstructing the flow, the action of the finite family of symmetries related to the parameter $\alpha$ (i.e., $V_{\alpha,0}$, obtained from \eqref{Valpha} by setting $\beta=0$) is related to the time invariance properties of Brownian motion. On the other hand, the parameter $\beta$ (i.e., $V_{0,\beta}$, obtained from \eqref{Valpha} by setting $\alpha=0$) is related, in one dimension, to the translational invariance due to Girsanov's Theorem. In higher dimensions, $V_{0,\beta}$ is related to rotational invariance. See \cite{proceeding} for a detailed discussion.
\end{osservazione}

\medskip

\begin{proposizione}\label{sym sde generiche}
The infinitesimal symmetries of a generic $ SDE_{\mu,\sigma}$ with constant-sign diffusion coefficient are exactly those given by the following infinite-dimensional family:
\begin{equation}\label{T*Va}
    V^\prime_{\alpha,\beta}=\Big( 
    \left( \frac{1}{2}\alpha'(t)\sigma(x,t)\cdot \Phi^{-f}(x,t)+  \sigma(x,t) \beta(t) + \alpha(t) \partial_t \Phi(\tilde{\Phi}^{-1}(x,t)) \right)  \partial_x +  \alpha(t)\partial_t, \ H_{\alpha, \beta}   \Big)
\end{equation}
where
\begin{align*}
    H_{\alpha, \beta} =\ & -\frac{1}{2}\alpha''(t) \cdot \Phi^{-f}(x,t) + \frac{\alpha'(t)}{2}\cdot h(\tilde{\Phi}^{-1}(x,t)) +  \frac{1}{2}\alpha'(t) \Phi^{-f}(x,t) \cdot h_x(\tilde{\Phi}^{-1}(x,t)) \\
    & +\alpha(t) h_t(\tilde{\Phi}^{-1}(x,t))-\beta'(t) + \beta(t) \cdot h_x(\tilde{\Phi}^{-1}(x,t)),
\end{align*}
\(\Phi^{-f}\) denotes the first component of \(\tilde{\Phi}^{-1}\) and $\Phi$ and $h$ are given by the remodeling map in Proposition \ref{remodeling_map}.
\end{proposizione}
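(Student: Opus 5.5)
The argument is to push the Brownian symmetries \eqref{Valpha} forward through the remodeling map $T=(\tilde{\Phi},h)$ of Proposition \ref{remodeling_map}, and then to check that nothing is lost.

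Inclusion of the family. By Proposition \ref{remodeling_map}, $T$ maps $SDE_{(0,1)}$ into $SDE_{\mu,\sigma}$. Lemma \ref{lemma T*} therefore says that $T_*(V_{\alpha,\beta})$ is an infinitesimal symmetry of $SDE_{\mu,\sigma}$ for every choice of $\alpha,\beta$.

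Exhaustiveness. Let $V'$ be any infinitesimal symmetry of $SDE_{\mu,\sigma}$. Apply Lemma \ref{lemma T*} to the restoring map $T^{-1}$: it sends $SDE_{\mu,\sigma}$ back to $SDE_{(0,1)}$, so $(T^{-1})_*(V')$ is a symmetry of Brownian motion. It must then equal some $V_{\alpha,\beta}$, since the lemma preceding the proposition describes all solutions of \eqref{equa:eqa}--\eqref{equa:eqb} with $\mu=0,\sigma=1$. Functoriality of push-forwards gives $T_*\circ(T^{-1})_*=\mathrm{id}$. This follows from the group law \eqref{T composizione} and Remark \ref{composizione ben def}, because $T\circ(T^{-1}\circ T_\lambda\circ T)\circ T^{-1}=T_\lambda$. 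Hence $V'=T_*(V_{\alpha,\beta})$.

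Computing the push-forward. Here $f=\mathrm{id}$, so $f'=1$ and $\tilde{\Phi}^{-1}(x,t)=(\Phi^{-f}(x,t),t)$. For $V_{\alpha,\beta}$ we have
\[
Y=\tfrac12\alpha' x+\beta,\qquad m=\alpha,\qquad H=-\tfrac12\alpha'' x-\beta'.
\]

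\textbf{Spatial component.} Formula \eqref{pushforward} gives the spatial component $(\Phi_x Y+\Phi_t\,\alpha)\circ\tilde{\Phi}^{-1}$. The remodeling map has $\Phi=F^{-1}$, and $F_x=1/\sigma$ by \eqref{Lamperti}. Differentiating $F(\Phi(x,t),t)=x$ therefore gives $\Phi_x=\sigma\circ\tilde{\Phi}$, so $\Phi_x\circ\tilde{\Phi}^{-1}=\sigma(x,t)$. Substituting $x\mapsto\Phi^{-f}(x,t)$ inside $Y$ then produces
\[
\tfrac12\alpha'\sigma\,\Phi^{-f}+\sigma\beta+\alpha\,\Phi_t\circ\tilde{\Phi}^{-1}.
\]

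\textbf{Time component.} It is $(f'm)\circ f^{-1}=\alpha(t)$.

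\textbf{Measure-change component.} It is $\bigl(\tfrac{m'}{2}h+H+\tilde{Y}(h)\bigr)\circ\tilde{\Phi}^{-1}$, with $\tilde{Y}(h)=Yh_x+\alpha h_t$. Expanding term by term yields
\[
\tfrac{\alpha'}{2}h\circ\tilde{\Phi}^{-1},\qquad -\tfrac12\alpha''\Phi^{-f}-\beta',\qquad \bigl(\tfrac12\alpha'\Phi^{-f}+\beta\bigr)h_x\circ\tilde{\Phi}^{-1},\qquad \alpha\, h_t\circ\tilde{\Phi}^{-1}.
\]
These are exactly the terms of $H_{\alpha,\beta}$.

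Main obstacle. The delicate step is exhaustiveness, not the computation. It rests on two facts. First, the restoring map $T^{-1}$ must actually be an admissible stochastic transformation. This requires the inverse formula \eqref{T inversa} and the non-explosiveness assumptions, which are implicitly assumed as stated in the paper. Second, push-forward must be compatible with composition. I would justify the second from the definition of $T_*$ via conjugated one-parameter groups, together with the uniqueness in Theorem \ref{teo ricostruzione del flusso}.

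Everything else is careful bookkeeping of compositions with $\tilde{\Phi}^{-1}$. In particular one must keep $\Phi_t$ evaluated at $\tilde{\Phi}^{-1}$ and not confuse it with $(\Phi^{-f})_t$.
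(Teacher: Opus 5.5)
Your proposal is correct and follows the paper's proof. Inclusion comes from Lemma \ref{lemma T*} applied to the remodeling map, exhaustiveness from the restoring map $T^{-1}$ together with $T_*((T^{-1})_*(V'))=V'$, and the explicit form from evaluating \eqref{pushforward} with $f=\mathrm{id}$. Your extra details, namely $\Phi_x\circ\tilde{\Phi}^{-1}=\sigma$ from $\Phi=F^{-1}$ and the conjugation argument for functoriality, are steps the paper leaves implicit.
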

\begin{proof}
Let $T=(\tilde{\Phi},h)$, with \(\tilde{\Phi}=(\Phi^f,f)\) and \( f=\mathrm{id}\), be the remodeling map introduced in Proposition \ref{remodeling_map}, which allows us to recover $SDE_{\mu,\sigma}$ from standard Brownian motion. Since \( f=\mathrm{id}\), \(\Phi^f=\Phi\). Lemma \ref{lemma T*} ensures that $T_*(V_{\alpha,\beta})$ is a symmetry for $SDE_{\mu,\sigma}$, where $V_{\alpha,\beta}$ is the infinite-dimensional family of symmetries \eqref{Valpha} of standard Brownian motion. Applying the push-forward formula \eqref{pushforward} to $V_{\alpha,\beta}$ yields exactly \eqref{T*Va}.

Conversely, we want to show that any symmetry of $SDE_{\mu,\sigma}$ is of the form \eqref{T*Va}. This is straightforward: if we denote by $T^{-1}$ the restoring map (see Definition \ref{def remodeling restoring}) that allows us to recover standard Brownian motion from $SDE_{\mu,\sigma}$, then Lemma \ref{lemma T*} ensures that for any symmetry $\tilde{V}$ of $SDE_{\mu,\sigma}$, $V\coloneqq(T^{-1})_*(\tilde{V})$ is a symmetry for standard Brownian motion. Thus, $\tilde{V}=T_*((T^{-1})_*(\tilde{V}))=T_*(V)$ is the push-forward of a symmetry of standard Brownian motion of the form \eqref{Valpha}. Again, the push-forward formula \eqref{pushforward} applied to the family \eqref{Valpha} yields \eqref{T*Va}.

\end{proof}

\begin{osservazione}
    Sections \ref{Section sde kbe} and \ref{section relazione sym} might erroneously suggest that, at least for the study of symmetries, stochastic transformations could be restricted solely to Doob-type transformations rather than general Girsanov ones, without any loss of information. However, the importance of general Girsanov-type transformations is clarified in the present section, since the remodeling map is not, in general, of Doob type, and in this work it plays a central role in determining the symmetries.
\end{osservazione}

\subsection{Strategy for the generic  KBE}\label{Strategy for the generic KBE}

In order to compute the Lie point symmetries of a generic  KBE \begin{equation}\label{generic kbe}
    u_t+\mu u_x +\frac{1}{2}\sigma^2 u_{xx}=0,
\end{equation} we may invoke Theorem \ref{doob kbe} and determine instead the Doob symmetries of the associated SDE \begin{equation}\label{generic sde}
    dX_t=\mu(X_t,t) dt+\sigma(X_t,t) dW_t.
\end{equation}  Rather than solving the determining equations directly, we plan to leverage the results established in Section \ref{section_2}, by transporting the generic family of symmetries of Brownian motion and requiring that its push-forward constitutes a Doob symmetry. Furthermore, to obtain more tractable conditions, we may reduce the problem to the case of stochastic differential equations with a unit diffusion coefficient $\sigma\equiv1$, and arbitrary drift $\mu$. The symmetries of a generic $\ SDE_{\mu,\sigma}$ can then be recovered via a Lamperti transformation. This reduction is justified by the well-known fact that diffeomorphisms preserve the Lie point symmetries of partial differential equations - a fact that, in our framework, can alternatively be derived by analyzing the Doob symmetries of the corresponding SDE, as in the following result.
\begin{lemma}\label{mu 1}
Diffeomorphisms and time changes preserve the set of Doob symmetries: if $\ V=(\tilde{Y},H)$ is a Doob symmetry for $ SDE_{\mu,\sigma}$, with \( H=\sigma K_x\) and \(\tilde{Y}=(Y,m)\), and $T=(\tilde{\Phi},0)$ is a stochastic transformation with \(\tilde{\Phi}=(\Phi^f,f)\), then \( T_*(V)\) (see \eqref{pushforward}) is still a Doob symmetry for the transformed equation $ SDE_{E_T(\mu),E_T(\sigma)},$ with \[ (\frac{1}{\sqrt{f'}}H)\circ \tilde{\Phi}^{-1}=E_T(\sigma) (K\circ \tilde{\Phi}^{-1})_x.\]  In particular, diffeomorphisms and time changes preserve the set of Lie point symmetry of the associated KBE.
\end{lemma}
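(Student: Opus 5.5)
We argue directly from the push-forward formula \eqref{pushforward} with $h=0$, together with Lemma \ref{lemma T*} and the characterization of infinitesimal Doob symmetries in Proposition \ref{caratterizzazione sym doob infin}. First, Lemma \ref{lemma T*} already guarantees that $T_*(V)$ is an infinitesimal symmetry of $SDE_{E_T(\mu),E_T(\sigma)}$. It therefore remains only to check the two Doob conditions i) and ii) for the transformed symmetry, with candidate function $\bar K:=K\circ\tilde{\Phi}^{-1}$.

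For condition i), we set $h=0$ in \eqref{pushforward}. The second component of $T_*(V)$ then reduces to $\bar H=(H/\sqrt{f'})\circ\tilde{\Phi}^{-1}$, since the terms $\frac{m'}{2}h$ and $\tilde Y(h)$ vanish. We substitute $H=\sigma K_x$. By the chain rule, with $\hat x=\Phi^f(x,t)$ and $\hat t=f(t)$, we have $K_x=(\bar K_{\hat x}\circ\tilde{\Phi})\,\Phi^f_x$, because $f$ does not depend on $x$. Hence
\[
\bar H=\Big(\tfrac{1}{\sqrt{f'}}\sigma\Phi^f_x\Big)\circ\tilde{\Phi}^{-1}\,\bar K_{\hat x}=E_T(\sigma)\,\bar K_{\hat x},
\]
using the definition \eqref{ET} of $E_T(\sigma)$. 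This is exactly the displayed identity in the statement, and it is condition i) for the new equation.

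For condition ii), we must show $E_T(L_t)[\bar K]=0$, where $E_T(L_t)$ is the space-time generator of the transformed SDE. The key identity is the intertwining relation
\[
L_t(g\circ\tilde{\Phi})=f'\,\big(E_T(L_t)[g]\big)\circ\tilde{\Phi},
\]
valid for any smooth $g$ when $h=0$. This is just It\^o's formula/chain rule: expanding $L_t(g\circ\tilde\Phi)$ gives $f'g_{\hat t}+L_t(\Phi^f)g_{\hat x}+\frac12\sigma^2(\Phi^f_x)^2g_{\hat x\hat x}$, and one compares with \eqref{ET}. The same relation was used implicitly in the proof of Proposition \ref{caratterizzazione doob e qd infin}. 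Applying it with $g=\bar K$ gives $f'\,E_T(L_t)[\bar K]\circ\tilde\Phi=L_t(K)=0$. Since $f'>0$ and $\tilde\Phi$ is invertible, condition ii) follows, and Proposition \ref{caratterizzazione sym doob infin} shows that $T_*(V)$ is a Doob symmetry.

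The final claim, that diffeomorphisms and time changes preserve the Lie point symmetries of the associated KBE, then follows from Theorem \ref{doob kbe}. That theorem puts Doob symmetries of each SDE in correspondence with Lie point symmetries of its KBE, and the transformed KBE is the KBE of $SDE_{E_T(\mu),E_T(\sigma)}$ because $\theta=1$ corresponds to $h=0$ in Theorem \ref{teo quasi doob}. The main obstacle is keeping track of which time variable is used when differentiating through $\tilde\Phi^{-1}$, so that the factor $f'$ in the intertwining identity comes out right. Beyond that bookkeeping, the argument is a chain-rule computation.
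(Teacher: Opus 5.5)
Your proposal is correct and follows essentially the paper's proof. You take $\bar K=K\circ\tilde{\Phi}^{-1}$, which is the paper's choice with integration constant $\gamma(t)=0$, check condition i) by the chain rule, and show $E_T(L_t)[\bar K]=\frac{1}{f'}L_t(K)\circ\tilde{\Phi}^{-1}=0$; the differences are only in presentation, since you state the paper's direct expansion as an intertwining identity for arbitrary $g$ and make explicit the appeal to Lemma \ref{lemma T*} for the symmetry property, which the paper leaves implicit.
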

\begin{proof}
    Since $ V$ is a Doob symmetry, from infinitesimal characterization of Doob transformations (Proposition \ref{caratterizzazione doob e qd infin}), there exists a function $ K$ such that $ H=\sigma  K_x$ and $ L_t(K)=0.$ The action of $ T$ on the SDE is given by Theorem \ref{trasformate}, while the push-forward of $ V$ is obtained from formula \eqref{pushforward}.
    We must prove that $ T_*(V)$ is a symmetry of Doob type for  $ SDE_{E_T(\mu),E_T(\sigma)},$ so we must provide a function $\tilde{K}$ such that $ (\frac{1}{\sqrt{f'}}H)\circ \tilde{\Phi}^{-1}=E_T(\sigma)\tilde{K}_x$ and $ E_T(L_t)[\tilde{K}]=0.$ By integrating the first condition, remembering that since $ V$ is Doob for $ SDE_{\mu,\sigma}$ it holds that $ H=\sigma K_x$, we find that $\tilde{K}$ must take the following form
    \[\tilde{K}=\int \Big(\frac{H}{\Phi^f_x \sigma}\Big)\circ \tilde{\Phi}^{-1}dx=\int \Big(\frac{K_x}{\Phi^f_x}\Big)\circ\tilde{\Phi}^{-1}dx=K\circ\tilde{\Phi}^{-1}+\gamma(t),\]
    where $\gamma$ (constant of integration with respect to $ x$) is an arbitrary function of time chosen so that $ L_t(\tilde{K})=0.$ A straightforward computation shows that it suffices to take $\gamma=0$:
    \[ E_T(L_t)[K\circ\tilde{\Phi}^{-1}]=\partial_t(K\circ\tilde{\Phi}^{-1})+E_T(\mu)\partial_x(K\circ\tilde{\Phi}^{-1})+\frac{1}{2}E_T(\sigma)^2\partial_{xx}(K\circ\tilde{\Phi}^{-1})
    =\]
    \[ -\frac{K_x \Phi^f_t}{\Phi^f_x}\circ\tilde{\Phi}^{-1}
    +\frac{1}{f'}K_t\circ\tilde{\Phi}^{-1}+\frac{1}{f'}\left(L_t(\Phi^f)\circ\tilde{\Phi}^{-1}\right) \Big(\frac{K_x}{\Phi^f_x}\Big)\circ\tilde{\Phi}^{-1}+\]
    \[
    +\frac{1}{2}\left(\frac{(\Phi^f_x)^2\sigma^2}{f'}\right)\circ\tilde{\Phi}^{-1}\frac{(K_{xx}\Phi^f_x)\circ\tilde{\Phi}^{-1}-(K_x\circ\Phi^f_{xx})\circ\tilde{\Phi}^{-1}}{(\Phi^f_x\circ\tilde{\Phi}^{-1})^3}=\]
    \[\frac{1}{f'}\big(K_t+\mu K_x+\frac{1}{2}\sigma^2 K_{xx}\big)\circ\tilde{\Phi}^{-1}=\frac{1}{f'}L_t(K)\circ\tilde{\Phi}^{-1}=0,\]
    and this completes the proof.
\end{proof}
\noindent Summarizing, our strategy for studying  the symmetries of a generic KBE \eqref{generic kbe}, is the following: 
\begin{itemize}
    \item Study the Doob symmetries of SDE \eqref{generic sde}, invoking Theorem \ref{doob kbe};
    \item Instead of finding the Doob symmetries of SDE \eqref{generic sde}, study  the Doob symmetries of $ SDE_{\mu,1}$ by Lemma \ref{mu 1};
    \item Instead of determining the Doob symmetries of $ SDE_{\mu,1}$ by solving the determining equations, invoke Section \ref{section_2} and study the conditions under which the push-forward of the generic family of symmetries~\eqref{doob mb} of Brownian motion constitutes a Doob symmetry for \( \mathrm{SDE}_{\mu,1} \);
    \item 
   Finally, to reconstruct the Lie point symmetries of the original generic KBE~\eqref{generic kbe}, it suffices to transport the Doob symmetries of $\mathrm{SDE}_{\mu,1}$ via the pushforward of the Lamperti-type transformation $\Phi$ that links $\mathrm{SDE}_{\mu,1}$ to $\mathrm{SDE}_{\mu,\sigma}$ (recall that Lemma~\ref{mu 1} ensures that diffeomorphisms preserve Doob symmetries). Subsequently, one applies Theorem~\ref{doob kbe} to convert the Doob symmetries of $\mathrm{SDE}$~\eqref{generic sde} into Lie point symmetries of the KBE~\eqref{generic kbe}. 
    \end{itemize} 
    
    We note that the transformation that maps the Brownian motion to $\mathrm{SDE}_{\mu,1}$ consists solely of a Girsanov-type contribution, without any Lamperti transformation, namely 
\begin{equation}\label{h=mu}
T = \big(\mathrm{Id}_{\mathbb{R}^2},\, h\big) \quad \text{with} \quad h = \mu.
\end{equation}
 Furthermore, the fact that the diffeomorphism is trivial allows one to handle more easily the algebraic conditions ensuring that the pushforward of $T$ is of Doob type for the transformed SDE, as we shall see in the next section.
\\
\\
\noindent The procedure just described is straightforward and almost algorithmic (see the examples in Section \ref{sezione esempi}).

\subsection{Doob symmetries of one-dimensional Brownian motion}\label{sottosezione 4.1}

The correspondence proved in Theorem \ref{doob kbe}  ensures that in order to study the symmetries of the KBE it is sufficient to study the Doob symmetries of the related SDE. Furthermore, as pointed out in Sections \ref{section_2} and \ref{Strategy for the generic KBE}, we can use the remodeling maps: instead of solving the determining equations for Doob symmetries for each SDE, we can simply push-forward the general Brownian motion symmetries and check whether they qualify as Doob-type symmetries. Thus, as a starting case, we aim to investigate Doob symmetries for one-dimensional Brownian motion.\\
\\
Concerning one-dimensional Brownian motion SDE (\ref{Brownian_motion_SDE}), we recall (see \eqref{Valpha}) that the most general time-dependent Girsanov symmetry of Browniam motion is given by the following infinite-dimensional family, depending on two arbitrary functions $\alpha(t)$ and $\beta(t)$:
\begin{equation}\label{Vab}
V_{\alpha, \beta}= \Bigg( 
   \left( \frac{1}{2}\alpha'(t)x +\beta(t)\right) \partial_x + \alpha(t)\partial_t, \ \ -\frac{1}{2}x \alpha''(t)-\beta'(t)
\Bigg) .   
\end{equation}
Here and in the following with the notation $F'$ we denote the derivative of a one variable function $ F$ with respect to its natural variable. 
Since we are interested in Doob symmetries, we have both to introduce a function $K(x,t)$ satisfying $K_x=-\frac{1}{2}x\alpha''(t)-\beta'(t)$, which gives
\begin{equation}\label{kBM}
K(x,t)=-\frac 14\alpha''(t)x^2-\beta'(t)x +\epsilon(t),
\end{equation}
where $\epsilon(t)$ is an arbitrary function of time, and  to impose $L_t(K)=K_t+\frac{1}{2}K_{xx}=0$ (see Proposition \ref{caratterizzazione doob e qd infin}), which yields
\begin{equation}\label{simmBM}
-\frac 14 \alpha''(t)-\frac 14 \alpha'''(t)x^2-\beta''(t)x+\epsilon'(t)=0.
\end{equation}
This means that
\begin{equation*}
    \begin{matrix}
        \alpha'''(t)=0; &
          \beta''(t)=0 & \epsilon'(t)=\frac14 \alpha''(t)\\
    \end{matrix}.
\end{equation*}
The first two conditions give $\alpha(t)=c_0+c_1t+c_2t^2$ and $\beta(t)=c_3+c_4t$, while the last condition allows to determine $\epsilon(t)=\frac 12 c_2t+c_5$.
In this way,  we recover the whole family of Doob symmetries of one-dimensional Brownian motion, whose generators are
\begin{equation}\label{doob mb}
     \begin{matrix}
        V_1=\Big(\partial_t, \ 0
            \Big)\\
        V_2=\Big(
            x\partial_x + 2t\partial_t, \ 0
        \Big)\\
        V_3=\Big(
            2xt \partial_x +  2t^2 \partial_t, \ -2x\Big)\\
         V_4=\Big(\partial_x,  \ 0 \Big)\\
        V_5=\Big(             t\partial_x ,  \ -1 \Big),
    \end{matrix}
\end{equation}
obtained, respectively, after choosing $(\alpha,\beta)=(1,0), (2t,0), (2t^2,0), (0,1), (0,t)$, that is, respectively, for constant( $c_0$)/linear($c_1$)/quadratic($c_2$) $\alpha$, and for constant($c_3$)/linear($c_4$) $\beta$.\\
\begin{osservazione}
Generators (\ref{doob mb}) are consistent - although it may not appear so at first - with formulas (4.18) and (4.20) in \cite{simmetriekolmogorov}. In fact, \cite{simmetriekolmogorov} states that the SDE~(\ref{Brownian_motion_SDE}) admits only three symmetries: \( X_1 = \partial_t \), which in our notation corresponds to the symmetry \( V_1 \); \( X_2 = 2t\partial_t + x\partial_x \), corresponding to \( V_2 \); and \( X_3 = \partial_x \), corresponding to \( V_4 \). This is unsurprising, since \cite{simmetriekolmogorov} restricts admissible symmetries of an SDE to the form \( X = m\partial_t + Y\partial_x \), without including any \( H \) term, as Girsanov-type changes of measure are not considered. As a result, only those symmetries in our framework with \( H = 0 \) - namely, \( V_1, V_2, V_4 \) - are recovered in \cite{simmetriekolmogorov}.
Nevertheless, \cite{simmetriekolmogorov} observes that the corresponding KBE admits additional symmetries beyond \( X_1, X_2, X_3 \), namely \( Y_1 = t\partial_x + xu\partial_u \) and \( Y_2 = 2t^2\partial_t + 2xt\partial_x + (x^2 - t)u\partial_u \). In our framework, Theorem~\ref{doob kbe} ensures that the number of Lie point symmetries of the KBE coincides with those of Doob-type of the related SDE. Accordingly, the SDE~(\ref{Brownian_motion_SDE}) also possesses two additional symmetries: \( V_5 \), corresponding to the KBE symmetry \( Y_1 \), and \( V_3 \), corresponding to \( Y_2 \).
In particular, \( V_5 \) is obtained by setting \( \alpha = 0 \) (i.e., \( c_0 = c_1 = c_2 = 0 \)) and \( \beta = t \) (i.e., \( c_3 = 0 \), \( c_4 = 1 \)). From~(\ref{vector_field}), this gives \( \epsilon = c_5 \) and \( K = -x + c_5 \) \footnote{It should be noted that constant $c_5$ has been omitted from the symmetry, as it represents its trivial part related to the superposition principle, see Section \ref{sottosezione 4.2}}. According to~(\ref{simmBM}), the symmetry of the KBE associated with \( V_5 \) is:
\[
t\partial_x + xu\partial_u,
\]
which is precisely \( Y_1 \). Similarly, \( V_3 \) corresponds to \( Y_2 \) with \( \alpha = 2t^2 \) (i.e., \( c_0 = c_1 = 0 \), \( c_2 = 2 \)), \( \beta = 0 \) (i.e., \( c_3 = c_4 = 0 \)), \( \epsilon = t + c_5 \), and \( K = -x^2 + t+ c_5 \).     
\end{osservazione}

This correspondence highlights the soundness of our theoretical framework and the importance of including Doob–Girsanov-type transformations among the admissible stochastic transformations of a SDE.

    \subsection{Lie point symmetries of backward heat equation }\label{sottosezione 4.2}
Consider the KBE associated with SDE \eqref{Brownian_motion_SDE}:
\begin{equation}\label{Brownian_motion_KBE}
    u_t+\frac{1}{2}u_{xx}=0.
\end{equation}
It is well known that equation~(\ref{Brownian_motion_KBE}) can be reduced to the classical backward heat equation \( u_t + u_{xx} = 0 \) via the diffeomorphism \footnote{Using the notation from Section \ref{sezione 1.3}, consider the transformation \( T = ((\Phi, f), h) = ((\sqrt{2}x, \mathrm{id}), 0) \).} \( \Phi(x) = \sqrt{2}x \).  Hence, up to a change of variables, the Lie point symmetries of~(\ref{Brownian_motion_KBE}) coincide with those of the classical heat equation (see also Lemma~\ref{mu 1} for a proof from a probabilistic perspective). For this reason, several authors~\cite{bluman,ricciardi}, when analyzing the symmetries of the KBE associated with one-dimensional Brownian motion, choose to study the classical heat equation instead; the original symmetries can then be recovered via push-forward of the inverse transformation.\\ 
\\
From the correspondence outlined in Theorem \ref{doob kbe},  the symmetries of the KBE \eqref{Brownian_motion_KBE} can be recovered from the Doob symmetries \eqref{doob mb} of the associated SDE \eqref{Brownian_motion_SDE}.
By adding the trivial symmetry \( \Xi = u \partial_u \), which is associated with the linearity of the equation, we recover - within our framework - the following generators of the well-known six-dimensional algebra of Lie point symmetries for the KBE associated with one-dimensional Brownian motion: 
    \begin{equation}\label{simmetrie kbe mb}
           \begin{matrix}
      \Xi_1=\partial_t \\
        \Xi_2= 2 t \partial_t+ x\partial_x\\
        \Xi_3=\partial_x\\
        \Xi_4=t\partial_x+x u \partial_u\\
        \Xi_5=2t^2\partial_t+2tx\partial_x+(x^2-t)u\partial_u\\
        \Xi_6=u\partial_u.
    \end{matrix}  
    \end{equation}

    \begin{osservazione}\label{Xi6}
        The presence of the symmetry \( \Xi_6 \) does not contradict Theorem \ref{doob kbe}, which states that the symmetries of the KBE correspond to the subgroup of Doob-type symmetries of the associated SDE. In fact, by applying Theorem \ref{doob kbe}, the symmetry \( \Xi_6 \) corresponds to a degenerate Doob symmetry with \( H = 0 \), that is, the trivial symmetry of the SDE corresponding to the identity transformation.
    \end{osservazione}

    \begin{osservazione}
   Among the generators (\ref{simmetrie kbe mb}), \(\Xi_1\) and \(\Xi_6\) are trivial symmetries corresponding, respectively, to the equation’s autonomy and linearity. From a probabilistic point of view, \(\Xi_1\) and \(\Xi_6\) are in fact related to the concept of \textit{first integral} (or \textit{conserved quantity}) of the associated SDE, i.e., a function \( I(x, t) \) that remains constant along solutions of the SDE: \( I(X_t, t) = \text{const} \). For \( I \) to be a first integral, it must satisfy \( \mathrm{d}I(X_t, t) = 0 \), which, by Itô’s formula, translates into the conditions
\begin{equation}\label{first int}
L_t(I) = 0 \quad \text{and} \quad \sigma^T \nabla I = 0.
    \end{equation}
From the infinitesimal characterization of Doob transformations (see Proposition \ref{caratterizzazione doob e qd infin}), it is clear that condition (\ref{first int}) corresponds to a trivial Doob transformation, with \( K=I\) and \( H=0.\)
As shown in \cite{simmetriekolmogorov, unal}, if \( I \) is a first integral, then the KBE always admits \( Iu \, \partial_u\) and, in the autonomous setting, \( I\partial_t \) as symmetries. Symmetries \( \Xi_1, \Xi_6 \)  are precisely of this type, as constant functions (e.g., \( I = 1 \)) are trivially preserved along the flow. Moreover, since such constants are preserved not only for Brownian motion but for any one-dimensional SDE, it is natural to expect - as we will later show - that the pushforward of this type of symmetry remains a symmetry for any one-dimensional autonomous SDE.  
   \end{osservazione}

\noindent Hence, the Lie algebra of infinitesimal symmetries of equation (\ref{Brownian_motion_KBE}) is spanned by the six vector fields \( \Xi_1,..,\Xi_6\) along with the infinite-dimensional subalgebra
\[ v_{K_0}=K_0(x,t)\partial_u,\]
where \( K_0\) is an arbitrary solution to the equation. This latter symmetry  \( v_{K_0}\) reflects the superposition principle, which holds for solutions of any linear PDE. \\
\\
Up to a change of variables (see Remark \ref{schrodinger}), with generators \(\Xi_1,...,\Xi_6\) we recover the well-known Schr\"odinger group Sch(1) of symmetries for one-dimensional Schr\"odinger equation/heat equation, which consists of six fundamental continuous symmetries of the free particle Schr\"odinger equation. These include 
\begin{itemize}
    \item \emph{Time translations} generated by \(\Xi_1\), associated with the Hamiltonian and the energy conservation, encoding the invariance of the system under shifts in time \( t \mapsto t+\lambda\). \footnote{Here and in the following, the finite transformation associated with a given infinitesimal generator is recovered integrating via the reconstruction of the flow \eqref{ricostruzione del flusso} .}
    \item \emph{Space translations} generated by \(\Xi_3,\) related to the homogeneity of space and the conservation of linear momentum,  corresponding to spatial shifts of the form \(x \mapsto x + \lambda\). From a probabilistic point of view, since KBE (\ref{Brownian_motion_KBE}) is related to one-dimensional Brownian motion SDE (\ref{Brownian_motion_SDE}), symmetry \(\Xi_3\) along with symmetry \(\Xi_1\) correspond to the following invariance property of Brownian motion (see, e.g., \cite{baldi}): if \(W_t\) is a Brownian motion with respect to filtration \( \mathcal{F}_t\), then \( W_{t+s}-W_s\) is a Brownian motion with respect to filtration \(\mathcal{F}_{t+s}.\)
    \item \emph{Galilean boosts} generated by \(\Xi_4\), corresponding to the invariance of the system under changes from one inertial reference frame to another moving at a constant velocity relative to the first (\( x \mapsto x + \lambda t\)). From a probabilistic perspective, this symmetry encodes Girsanov theorem with constant drift: if \( W_t\) is a $\mathbb{P}$-Brownian motion, then also \( W'_t=W_t+\int_0^t \lambda dt=W_t+\lambda t\) is a Brownian motion, but with respect to a novel probability measure $\mathbb{Q}$ with Radon Nikodym density given by \(  \frac{d\mathbb{Q}}{d\mathbb{P}}_{|_{\mathcal{F}_t}}=\exp\Big(-\lambda W_t-\frac{1}{2}\lambda^2 t\Big)\). The change of the underlying probability measure is reflected by the presence of a  non-trivial term in $\partial_u$, corresponding to non-trivial $ h$ terms in the stochastic transformation involving the related SDE (see again Theorem \ref{doob kbe} for correspondences between Lie point symmetries of KBE and Lie symmetries of SDE).
    \item \emph{Dilatations} generated by \( \Xi_2 \), corresponding to scaling transformations that rescale space and time differently, typically as \( x \mapsto \lambda x \) and \( t \mapsto \lambda^2 t \). This reflects the fundamental anisotropic scaling symmetry of the heat and Schrödinger equations. From a probabilistic perspective, the dilation symmetry corresponds to the fact that for any \( \lambda > 0 \), the rescaled process
\[
W_t^\lambda := \lambda W_{\frac{t}{\lambda^2}}
\]
is again a Brownian motion with respect to the rescaled filtration \(\mathcal{F}_{\frac{t}{\lambda^2}} \) (see again \cite{baldi}). 
\item \emph{Special conformal transformations} generated by  \( \Xi_5\), corresponding to nonlinear time reparametrizations of the form \( t \mapsto \frac{t}{1 + 2\lambda t} \), combined with a simultaneous rescaling of space \( x \mapsto \frac{x}{1 + 2\lambda t} \). From a probabilistic point of view, this symmetry encodes Girsanov theorem with non-constant drift, reflected by the presence of a non-trivial and time-dependent term in \( u\partial_u\).  This symmetry does not represent the most general Girsanov-type change of measure, since, in general, Girsanov-type symmetries do not transfer from the SDE to the KBE. Instead, it corresponds to the most general subclass of Doob-type changes of measure within the Girsanov framework, as again  Theorem \ref{doob kbe} ensures a correspondence between Doob symmetries of the SDE and Lie point symmetries of the KBE.
\item \emph{Global phase transformations} generated by \(\Xi_6\), corresponding to multiplying the  solution by a constant positive phase: \(  u \mapsto e^{\lambda} u \), related to the conservation of the total particle number and the linearity of the equation. This symmetry has a trivial probabilistic counterpart, since, as we already mentioned, it corresponds to the identity transformation on the related SDE.
\end{itemize}

\section{Symmetries classification in the autonomous case}\label{sezione classificazione}
 In this section, we present a constructive classification theorem for the symmetry algebra of an \textbf{autonomous} SDE and its associated KBE based on the drift and diffusion coefficients. Henceforth, the coefficients shall not depend on time: \(\mu=\mu(x) \) and \( \sigma=\sigma(x)\).\\
 \\

\begin{lemma}\label{Prop conddoob}
   An autonomous $ SDE_{\mu,1}$ admits Doob symmetries if and only if the following equation is satisfied
  \begin{equation}\label{CondDoob}
\left[\frac 12 x\alpha'(t)+\beta(t)\right] M'(x)+\alpha'(t) M(x)-\frac 14 \alpha'''(t)x^2-\beta''(t)x-\frac 14 \alpha''(t)+\tilde \epsilon'(t)=0
\end{equation}
where
\begin{equation}\label{Expression_M}
M(x)=\frac 12\left[\mu'(x)+\mu^2(x)\right]
\end{equation}
and \(\alpha, \beta\) and \(\tilde{\epsilon}\) are arbitrary functions of time.
\end{lemma}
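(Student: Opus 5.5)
Following the strategy of Section~\ref{Strategy for the generic KBE}, I would start from the remodeling map \eqref{h=mu}, $T=(\mathrm{Id}_{\mathbb{R}^2},h)$ with $h=\mu$, which sends Brownian motion to $\mathrm{SDE}_{\mu,1}$. By Proposition~\ref{sym sde generiche} (and Lemma~\ref{lemma T*}), every infinitesimal symmetry of $\mathrm{SDE}_{\mu,1}$ is the push-forward $T_*(V_{\alpha,\beta})$ of one of the Brownian symmetries \eqref{Vab}. With $\tilde{\Phi}=\mathrm{Id}$ and $f=\mathrm{id}$, the push-forward formula \eqref{pushforward} gives $\tilde{Y}=(\tfrac12\alpha' x+\beta)\partial_x+\alpha\partial_t$ unchanged, and
\[
H=-\tfrac12\alpha''x-\beta'+\tfrac12\alpha'\mu+\bigl(\tfrac12\alpha'x+\beta\bigr)\mu',
\]
using $h_t=0$ because the equation is autonomous. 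Then, by Proposition~\ref{caratterizzazione sym doob infin}, such a symmetry is of Doob type if and only if there is a $K$ with $H=K_x$ (since $\sigma=1$) and $L_t(K)=K_t+\mu K_x+\tfrac12K_{xx}=0$.

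Next I would integrate $H$ in $x$. Writing $\mu=U'$, the key observation is that $\tfrac12\alpha'\mu+(\tfrac12\alpha' x+\beta)\mu'=\partial_x\bigl[(\tfrac12\alpha' x+\beta)\mu\bigr]$. Hence
\[
K=-\tfrac14\alpha''x^2-\beta'x+\bigl(\tfrac12\alpha'x+\beta\bigr)\mu+\epsilon(t),
\]
with $\epsilon$ an arbitrary function of time. The computation is then a direct substitution into $L_t(K)=0$. Writing $Y=\tfrac12\alpha'x+\beta$, we have $K_t=-\tfrac14\alpha'''x^2-\beta''x+(\tfrac12\alpha''x+\beta')\mu+\epsilon'$ and $K_x=-\tfrac12\alpha''x-\beta'+\tfrac12\alpha'\mu+Y\mu'$. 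We also have $K_{xx}=-\tfrac12\alpha''+\alpha'\mu'+Y\mu''$.

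Summing these, the terms $(\tfrac12\alpha''x+\beta')\mu$ coming from $K_t$ cancel against $\mu\cdot(-\tfrac12\alpha''x-\beta')$ coming from $\mu K_x$. The surviving $\mu$-dependent terms are
\[
\tfrac12\alpha'\mu^2+Y\mu\mu'+\tfrac12\alpha'\mu'+\tfrac12 Y\mu''=Y M'+\alpha' M,
\]
with $M=\tfrac12(\mu'+\mu^2)$, because $M'=\tfrac12\mu''+\mu\mu'$. The remaining $\mu$-free terms are $-\tfrac14\alpha'''x^2-\beta''x-\tfrac14\alpha''+\epsilon'$. Setting $\tilde\epsilon=\epsilon$ gives exactly \eqref{CondDoob}, so the identity $L_t(K)=0$ is equivalent to \eqref{CondDoob}.

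For the converse direction, given $\alpha,\beta,\tilde\epsilon$ satisfying \eqref{CondDoob}, I would take $V=T_*(V_{\alpha,\beta})$, which is a symmetry by Lemma~\ref{lemma T*}, together with the $K$ above. Proposition~\ref{caratterizzazione sym doob infin} then shows that $V$ is of Doob type. The main obstacle is the bookkeeping in this cancellation, namely recognizing the total derivative and checking that the $\mu$-linear cross terms cancel.

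The only subtle points are two. First, the integration "constant" in $K$ may depend only on $t$, and it is absorbed into $\tilde\epsilon$. Second, the sign convention for $H$ in \eqref{pushforward}: with $m=\alpha$ we have $\tfrac{m'}{2}h=\tfrac12\alpha'\mu$, which I would double-check against the Brownian case $\mu=0$. In that case the condition must reduce to \eqref{simmBM}, and indeed it does.
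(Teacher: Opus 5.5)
Your proposal is correct and follows the paper's proof step for step: you push $V_{\alpha,\beta}$ forward along $T=(\mathrm{Id}_{\mathbb{R}^2},\mu)$, integrate $H$ to get $K$ (your total-derivative observation yields exactly the paper's \eqref{k}), and impose $L_t(K)=0$ via Proposition~\ref{caratterizzazione sym doob infin}, with the cancellation you write out being precisely the algebra the paper leaves implicit. One small remark: your $\mu=0$ sanity check cannot detect the sign of the $\tfrac{m'}{2}h$ term, since $h=\mu$ vanishes there; a genuine check is to substitute $Y=\tfrac12\alpha'x+\beta$ and $m=\alpha$ into \eqref{equa:eqa}, which gives $H=\bigl(\tfrac12\alpha'x+\beta\bigr)\mu'-\tfrac12\alpha''x-\beta'+\tfrac12\alpha'\mu$ and confirms your expression.
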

\begin{proof}
    We first observe that a $ SDE_{\mu,1}$ can be obtained by Brownian motion via the remodeling map \(T=(Id_{\mathbb{R}^2}, \mu)\) (see Section \ref{Strategy for the generic KBE}). Thus, applying formula \eqref{T*Va} to our case, i.e with \(\Phi=\mathrm{Id}\), \(f=\mathrm{id}\) and $\sigma=1$, we have that $ SDE_{\mu,1}$ admits the following family of symmetries $\tilde{V}_{\alpha,\beta}:=T_*(V_{\alpha,\beta})$ \footnote{Remember that since \(h=\mu\) and \(\mu=\mu(x)\), then \(h_t=0.\)} given by
    \begin{equation}\label{simtras}
 \tilde{V}_{\alpha,\beta}=\Big( 
    \left(\frac{1}{2}\alpha'(t)x+\beta(t)\right) \partial_x+ \alpha(t) \partial_t, \
 \underbrace{ -\frac{1}{2}\alpha''(t)  x -\beta'(t)  + \frac{\alpha'(t)}{2} \mu(x) +  \left(\frac{1}{2}\alpha'(t) x  +\beta(t) \right) \mu'(x)}_{\tilde{H}}\Big).
    \end{equation}
    By Proposition \ref{caratterizzazione sym doob infin}, since \(\sigma=1 \), $ SDE_{\mu,1}$ admits Doob symmetries if and only if there exists a function \(\tilde{K}=\tilde{K}(x,t)\) such that
\[ \tilde{K}_x=\tilde{H} \quad and \quad \tilde{L}_t(\tilde{K})=0,\]
where \( \tilde{L}_t\) is the infinitesimal generator related to \( SDE_{\mu,1}\), namely \(\tilde{L}_t=\partial_t+\mu(x)\partial_x+\frac{1}{2}\partial^2_x.\)
Integrating the first Doob condition, we find
\begin{equation}\label{k}
\tilde K(x,t)=-\frac 14\alpha''(t)x^2-\beta'(t)x +\beta(t)\mu(x)+\frac 12 \alpha'(t)x \mu(x)+\tilde \epsilon(t),
\end{equation}
where $\tilde \epsilon$ is an arbitrary function of $t$ obtained as an integration constant. By imposing $\tilde{L}_t(\tilde K)=0$ we get \eqref{CondDoob} and \eqref{Expression_M}.

\end{proof}
The next result is the core of our symmetry analysis and provides a complete classification of an isotropic SDE (that is with unit diffusion coefficient), in agreement with previous complete classifications of the related KBE (see Remarks 6.6 and 6.7). 
\begin{teorema}[Classification theorem]\label{classification theorem}
Let us consider an autonomous $ SDE_{\mu,1}$
    A necessary condition for the $ SDE_{\mu,1}$ to have non-trivial Doob symmetries (i.e., symmetries other than those induced by autonomy) is that
    \begin{equation}\label{Cnec}
        M(x)=m_0+m_1 x + m_2 x^2 + \frac{m_3}{(x+m_4)^2},
    \end{equation}
    where \( M(x)\) is given by \eqref{Expression_M} and \( m_0,...,m_4\) are arbitrary constants. In other words, the associated KBE given by \eqref{generic kbe} admits non-trivial symmetries only if \eqref{Cnec} is satisfied. \\
    Conversely, considering the function $M(x)$ in \eqref{Cnec}, we can distinguish the following cases. 
    \begin{itemize}
        \item If \( m_3=0\), \( SDE_{\mu,1}\) admits 6 Doob symmetries, i.e. the related KBE \eqref{generic kbe}  has a maximal algebra of  symmetries with dimension 6. In this case, \( M'''=0.\)
        \item If \( m_3\not=0\) \( (M'''\not=0\)) but \( m_1=2m_2m_4\), then $ SDE_{\mu,1}$ admits 4 Doob symmetries, i.e. the related KBE  has a four-dimensional symmetry algebra.
        \item If \( m_3\not=0\) \( (M'''\not=0\)) and \( m_1\not=2m_2m_4\), then \( SDE_{\mu,1}\) admits only the trivial symmetry related to the autonomy, and  the related KBE admits only two trivial symmetries related to the linearity and the autonomy.
        \end{itemize}
\end{teorema}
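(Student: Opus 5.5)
The starting point is Lemma \ref{Prop conddoob}: the Doob symmetries of $SDE_{\mu,1}$ are in bijection with the triples $(\alpha,\beta,\tilde\epsilon)$ of functions of time satisfying \eqref{CondDoob} identically in $(x,t)$. The plan is to treat \eqref{CondDoob} as a linear functional identity in $x$ whose coefficients are functions of $t$, and to study the linear span of the functions $\{xM'(x)+2M(x),\ M'(x),\ x^2,\ x,\ 1\}$.

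\textbf{Necessity.} Suppose there is a non-trivial symmetry, meaning $(\alpha',\beta)\not\equiv(0,0)$; the case $\alpha'=\beta=0$ is exactly the autonomy symmetry $\alpha=c_0$. Fix a time $t_0$ with $(\alpha'(t_0),\beta(t_0))\neq(0,0)$ and evaluate \eqref{CondDoob} there. This gives an ODE
\[
\bigl(\tfrac12 a x+b\bigr)M'+aM=p x^2+qx+r
\]
with constants $a,b,p,q,r$.

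If $a\neq 0$, write $x+2b/a=y$. The equation becomes $\tfrac a2\,(y M)'/y\cdot y=\ldots$, which is equivalent to $\tfrac{a}{2}\,\frac{d}{dy}(y^2M)=y\,(\text{quadratic in } y)$. Integrating gives $M=$ quadratic $+\,c/y^2$, which is exactly \eqref{Cnec} with $m_4=2b/a$.

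If $a=0$ and $b\neq0$, then $M'$ is quadratic, so $M$ is a cubic. Here I expect a genuine subtlety: a cubic is not of the form \eqref{Cnec}. To rule it out, I would examine the $t$-dependence. With $\alpha'\equiv0$ and $\beta\neq0$, dividing by $\beta$ shows that $M'$ must equal $(\beta''x-\tilde\epsilon')/\beta$, which is at most linear in $x$. Hence $M$ is quadratic. One must be careful with the mixed case where $\alpha'$ vanishes at $t_0$ but not identically. Whenever $\alpha'\not\equiv0$, one can instead choose $t_0$ with $\alpha'(t_0)\neq0$. So in all cases \eqref{Cnec} follows.

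\textbf{Sufficiency and counting.} Substitute \eqref{Cnec} into \eqref{CondDoob}. Multiply by $(x+m_4)^3$, or in the case $m_3=0$ simply compare polynomial coefficients, and collect powers of $x$.

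For $m_3=0$, $M$ is quadratic. The coefficients of $x^2,x,1$ give three linear ODEs in $(\alpha,\beta,\tilde\epsilon)$:
\begin{align*}
2m_2\alpha'-\tfrac14\alpha'''&=0,\\
\tfrac32 m_1\alpha'+2m_2\beta-\beta''&=0,\\
m_1\beta+m_0\alpha'-\tfrac14\alpha''+\tilde\epsilon'&=0.
\end{align*}
These form a triangular system: the first is a third-order ODE for $\alpha$, the second a second-order ODE for $\beta$, and the third a first-order ODE for $\tilde\epsilon$. The solution space therefore has dimension $3+2+1=6$. This yields the six generators; the constant in $\tilde\epsilon$ is the $u\partial_u$ one, matching \eqref{simmetrie kbe mb}.

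For $m_3\neq0$, the singular part imposes independent constraints. The term $\frac{m_3}{(x+m_4)^2}$ in $M$ contributes, after combining with $M'$, the expression
\[
m_3\Bigl[\alpha'\frac{1}{(x+m_4)^2}-\frac{(x\alpha'+2\beta)}{(x+m_4)^3}\Bigr],
\]
whose $(x+m_4)^{-3}$ part vanishes if and only if $\beta=\tfrac12 m_4\alpha'$. Substituting this back and matching the polynomial part gives $\alpha'''=8m_2\alpha'$ together with a compatibility condition. Here $\beta''$ is proportional to $\alpha'''$, while the $x$-coefficient requires $\tfrac32 m_1\alpha'+2m_2\beta=\beta''$. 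This yields $(m_1-2m_2m_4)\alpha'=0$.

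If $m_1=2m_2m_4$, then $\alpha$ ranges over a three-dimensional space, $\beta$ is determined, and $\tilde\epsilon$ adds a constant. This gives $4$ dimensions, including $u\partial_u$, consistent with the intermediate regime. Otherwise $\alpha'=0$ and $\beta=0$, leaving only autonomy together with $u\partial_u$.

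\textbf{Main obstacle.} The delicate step is making the necessity argument rigorous and uniform in $t$, in particular excluding the cubic case and correctly separating the singular and polynomial parts. I would handle this with the pointwise-in-$t$ reduction above, combined with the observation that the span argument forces $M'$, and hence $M$, to lie in the finite-dimensional family \eqref{Cnec}. Finally, Theorem \ref{doob kbe} and Lemma \ref{mu 1} transfer the counts to the KBE.
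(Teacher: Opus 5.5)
Your proposal is correct. The sufficiency and counting half matches the paper's. Substituting \eqref{Cnec} into \eqref{CondDoob} gives the triangular system \eqref{sistema}, the singular part forces $\beta=\frac12 m_4\alpha'$, and compatibility with the $x$-coefficient yields $(m_1-2m_2m_4)\alpha'=0$, exactly as in the paper.

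The necessity half takes a different route. The paper differentiates \eqref{CondDoob} three times in $x$ to kill the polynomial terms, obtaining $5\alpha'M'''+(\alpha'x+2\beta)M''''=0$. It solves this as $M'''=c(t)/(\alpha'x+2\beta)^5$, then uses the $t$-independence of $M$ to impose $\beta=k\alpha'$ and $c=k_1(\alpha')^5$, and finally integrates three times. You instead freeze $t$ at one instant $t_0$ with $\alpha'(t_0)\neq0$ and integrate the resulting first-order linear ODE with integrating factor $y$, i.e.\ $\frac a2\,(y^2M)'=y\,Q(y)$. This gives a quadratic plus $C/y^2$ at once, with no $1/y$ term since the right-hand side is divisible by $y$.

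Your route has three advantages. It needs only $M\in C^1$ rather than $C^4$. It sidesteps the paper's informal ``the only way to achieve this'' step, which is not literally forced when $c\equiv0$ and needs care at times where $\alpha'$ vanishes. It also treats explicitly the mixed case where $\alpha'$ vanishes at some but not all times. The paper's route, in exchange, shows already at the necessity stage that $\beta/\alpha'$ must be constant whenever a genuine singular term is present. In your argument this only surfaces in the sufficiency computation, or from uniqueness of the partial-fraction decomposition as $t_0$ varies.

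Two small things to tidy:
\begin{itemize}
\item The expression $\frac a2\,(yM)'/y\cdot y$ is garbled. It should simply read $\frac a2\,yM'+aM=Q(y)$, multiplied through by $y$.
\item State explicitly that the $(x+m_4)^{-2}$ contributions cancel identically, because $x\alpha'+2\beta=\alpha'(x+m_4)+(2\beta-m_4\alpha')$. The singular part is therefore a pure multiple of $(x+m_4)^{-3}$, which is why $\beta=\frac12 m_4\alpha'$ is the only extra constraint.
\end{itemize}
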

\begin{proof}
    Let us first begin with the necessary condition: we assume that $ SDE_{\mu,1}$ admits Doob symmetries. From Lemma \ref{Prop conddoob}, equation \eqref{CondDoob} holds. If \( \alpha'=\beta=0\), equation \eqref{CondDoob} reduces to \( 0=0\) and we are referring to the trivial case of autonomy symmetry. Conversely, if \( \alpha'\not =0\), 
    the equation obtained by differentiating \eqref{CondDoob} three times with respect to \( x\) must also necessarily hold:
    \[ 5 \alpha'(t) M'''(x)+[\alpha'(t)x+2\beta(t)]M''''=0.\]
    This is a separable differential equation, which can be integrated to obtain
    \[ M'''=\frac{c(t)}{[\alpha'(t)x+2\beta(t)]^5}.\]
    Since we are in an autonomous regime, \( M\) depends only on \( x\), so also the right-hand side of the latter equation must depend only on \( x\). The only way to achieve this is imposing \( \beta(t)= k \alpha'(t)\) and \( c(t)=k_1 (\alpha'(t))^5\), with \( k,k_1 \in \mathbb{R}\) (possibly with \( k=0\) if \( \beta \equiv 0\)), so that
    \[ M'''=\frac{k_1}{(x+2k)^5}.\]
    Integrating three times with respect to \(x\), up to a renaming of the arbitrary constants, we obtain as a necessary condition
    \[  M(x)=m_0+m_1 x + m_2 x^2 + \frac{m_3}{(x+m_4)^2}.\]
Finally, if \( \alpha'=0\) but \( \beta \not =0\), then equation \eqref{CondDoob} becomes 
\[ \beta(t)M'(x)-\beta''(t)x+\epsilon'(t)=0,\]
which is again a separable differentiable equation that we can integrate to obtain
\[ M(x)= \frac{\beta''(t)}{2\beta(t)}x^2-\frac{\epsilon'(t)}{\beta(t)}x+c(t).\]
Since we are in an autonomous regime, \( M\) depends only on \(x\), so also the right-hand side of the latter equation must depend only on \( x\). The only way to achieve this is imposing \( \beta''(t)=m_2 \beta(t)\), \( \epsilon'(t)=m_1\beta(t)\) and \( c(t)=m_0\) for some constants \( m_0,m_1,m_2\), so that
\[ M(x)=m_0+m_1x+m_2 x^2,\]
as wanted (this is a particular assert of the thesis, with \(m_3=0\)) .
\\
    
    Let us now prove the sufficient conditions: we assume that \( M\) has the form \eqref{Cnec} and we first prove that if \( m_3=0\) then we have a maximal algebra of symmetries. Deriving \eqref{Cnec} with respect to \(x\) we get
    \[ M'=m_1+2m_2x-\frac{2m_3}{(x+m_4)^3}.\]
    We now substitute \( M\) and \( M'\) with these forms in the characterizing equation \eqref{CondDoob} of Lemma \ref{Prop conddoob}, and after some straightforward algebraic manipulations we obtain
    \[ x^2\left[2\alpha'(t)m_2
    -\frac{1}{4}\alpha'''(t)\right]+x\left[\frac{3}{2}\alpha'(t)m_1+2\beta(t)m_2-\beta''(t)\right]+\Big[\beta(t)m_1+\]\begin{equation}\label{eq M}+\alpha'(t)m_0-\frac{1}{4}\alpha''(t)+\tilde{\epsilon}'(t)\Big]+\frac{m_3\left(\alpha'(t)m_4-2\beta(t)\right)}{(x+m_4)^3}=0.\end{equation}
    If \( m_3=0\) (that is, if \(M'''=0\)), then \eqref{eq M} leads to the following conditions:
    \begin{equation}\label{sistema}
        \begin{cases}
            \alpha'''(t)=8m_2 \alpha'(t)\\
            \beta''(t)=2m_2\beta(t)+\frac{3}{2}m_1\alpha'(t)\\
            \tilde{\epsilon}'(t)=\frac{1}{4}\alpha''(t)-m_0\alpha'(t)-m_1\beta(t)
        \end{cases}
    \end{equation}
    Thus, if \( m_3=0\) (\(M'''=0\)), we have a maximal algebra of 6 symmetries of Doob-type, since the first equation in \eqref{sistema} gives 3 arbitrary constants for \(\alpha\), the second one gives 2 arbitrary constants for \(\beta\) and the case \((\alpha,\beta)=(0,0)\) gives \( V=0\) for \( SDE_{\mu,1}\) (i.e. the trivial symmetry given by the identity), corresponding to symmetry \(\Xi=u\partial_u\) for the related KBE (See Remark \ref{Xi6}). \\
    Let us now consider the case \( m_3\not=0\) (\( M'''\not=0\)). Then, in order for \eqref{eq M} to be satisfied, in addition to \eqref{sistema} we must impose the further condition that
    \begin{equation}\label{beta 4dim} \beta(t)=\frac{1}{2}\alpha'(t)m_4.\end{equation}
  This latter condition has to be consistent with the second equation of \eqref{sistema}: deriving two times with respect to time and substituting the result into (\ref{sistema}b), we get
  \begin{equation}\label{alpha 4dim} \alpha'(t)(2m_2m_4-m_1)=0,\end{equation}
  which implies that either \( m_1=2m_2m_4\) or \(\alpha'=0\). If  \( m_1=2m_2m_4\) , then the dimension of the algebra of symmetries is equal to 4, since now we have only 3 arbitrary constants for \(\alpha\) (\ref{sistema}.a) and, once again, the case \((\alpha,\beta)=(0,0)\) corresponding to the trivial symmetries \( V=0\) and \( \Xi=u\partial_u\) for the SDE and the related KBE, respectively, while \(\beta\) must obey \(\beta=\frac{1}{2}\alpha'm_4.\)\\

  Finally, if \(m_3\not=0\) and \(m_1\not=2m_2m_4\), then the only way to maintain consistency is to require \(\alpha'=0\): this leads to a minimal algebra of dimension 2, since we have only one arbitrary constant for \(\alpha\) (\(\alpha'=0\)) and, again the trivial case corresponding to \( (\alpha,\beta)=(0,0)\), while \(\beta=\frac{1}{2}\alpha'm_4=0\). These two symmetries correspond respectively to the autonomy of the system (\(\tilde{V}_{\alpha,\beta}\) in \eqref{simtras} with \((\alpha,\beta)=(1,0)\) gives \( V=\partial_t\)) and its linearity ( \(\tilde{V}_{\alpha,\beta}\) in \eqref{simtras} with \((\alpha,\beta)=(0,0)\) gives \( V=0\) for \( SDE_{\mu,1}\), which corresponds to symmetry \(\Xi=u\partial_u\) for the related KBE). 
\end{proof}

\begin{osservazione}
  In Theorem \ref{classification theorem}, as well as throughout the remainder of the paper, with a slight abuse of notation we regard the symmetry \(V=(0,0)\) of the SDE
as an independent trivial symmetry. As already explained in Remark \ref{Xi6}, this convention is justified by the need to establish a complete correspondence with the finite-dimensional basis of Lie symmetries of the associated KBE.
\end{osservazione}

\begin{osservazione}\label{costruttivo}
 The proof of the previous theorem is constructive. If $M'''=0$, the Doob-type symmetries for $\text{SDE}_{\mu,1}$ are given by \eqref{simtras}, where $\alpha$ and $\beta$ solve the system \eqref{sistema}.
If $M''' \neq 0$ but $m_1=2m_2m_4$, then the family of symmetries associated with the parameter $\beta$ degenerates and becomes linearly dependent on the family associated with the parameter $\alpha$ (see \eqref{beta 4dim}).
Finally, if $M''' \neq 0$ and $m_1 \neq 2m_2m_4$, then \eqref{beta 4dim} still holds, and furthermore we have $\alpha'=0$. Thus, the equation admits only the two trivial symmetries \( V=0\) and \( V=\partial_t\).
Similarly, we can explicitly construct the Lie point symmetries of the associated KBE.
Leveraging Theorem \ref{doob kbe}, the symmetries of the KBE take the form $\Xi=m\partial_t+Y\partial_x-Ku\partial_u$, where $m$ and $Y$ are the same as those appearing in the Doob symmetries of the associated SDE, and $K$ is given by \eqref{k},
where $\tilde{\epsilon}$ is determined by solving the corresponding equation in the system \eqref{sistema}.
\end{osservazione}

\begin{corollario}\label{corollario doob}
    Any $\text{SDE}_{\mu,1}$ of the form
    \[ dX_t=\mu(X_t)dt+dW_t, \]
    obtained from standard Brownian motion
    \[ dX_t=dW_t \]
    via a Doob transformation, admits six Doob symmetries. Consequently, the associated KBE admits a maximal six-dimensional Lie point symmetry algebra.
\end{corollario}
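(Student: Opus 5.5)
The plan is to reduce the statement to the first case of Theorem \ref{classification theorem}, by showing that $M'''=0$ (in fact $M$ constant) whenever $\mathrm{SDE}_{\mu,1}$ is obtained from Brownian motion by a Doob transformation.

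First, identify the transformation. By Section \ref{Strategy for the generic KBE}, the map from $\mathrm{SDE}_{0,1}$ to $\mathrm{SDE}_{\mu,1}$ with trivial diffeomorphism is $T=(\mathrm{Id}_{\mathbb{R}^2},h)$ with $h=\mu$. Requiring this to be of Doob type, Proposition \ref{caratterizzazione finita d e qd} (with $\sigma=1$ and the Brownian generator $L_t=\partial_t+\frac12\partial_{xx}$) gives a function $k$ with
\[
k_x=\mu, \qquad k_t+\tfrac12 k_{xx}=-\tfrac12 k_x^2 .
\]
Since $\mu=\mu(x)$ is autonomous, $k(x,t)=\int\mu\,dx+g(t)$. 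The second condition then becomes
\[
g'(t)+\tfrac12\mu'(x)+\tfrac12\mu^2(x)=0,
\]
that is, $M(x)=-g'(t)$. The left-hand side depends only on $x$ and the right-hand side only on $t$, so $M\equiv m_0$ is constant. Equivalently, $\phi=e^{k}$ satisfies $\phi_t+\frac12\phi_{xx}=0$, and $M=\phi_{xx}/(2\phi)$ is the familiar Riccati substitution.

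Second, apply the classification. $M$ is of the form \eqref{Cnec} with $m_1=m_2=m_3=0$, so $M'''=0$. The first case of Theorem \ref{classification theorem} therefore gives six Doob symmetries. Explicitly, by Remark \ref{costruttivo}, system \eqref{sistema} reduces to
\[
\alpha'''=0,\qquad \beta''=0,\qquad \tilde\epsilon'=\tfrac14\alpha''-m_0\alpha'.
\]
Counting the free constants in $\alpha$ and $\beta$ together with the trivial symmetry $(\alpha,\beta)=(0,0)$ gives six generators, given by \eqref{simtras}. Theorem \ref{doob kbe} then transfers these to a six-dimensional Lie point symmetry algebra of the associated KBE, which proves the second assertion.

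The main obstacle is the first step: one must make sure that "obtained via a Doob transformation" can legitimately be taken to mean that the specific map $(\mathrm{Id},\mu)$ is Doob. If a more general Doob-type transformation $(\tilde\Phi,h)$ were allowed, I would argue instead with Lemma \ref{mu 1} together with the composition structure of stochastic transformations. The diffeomorphism part preserves Doob symmetries, and the Doob measure-change part corresponds, by Corollary \ref{doob kbe fin}, to a strictly admissible transformation that introduces no potential term. Such a transformation maps the heat equation to the KBE, and it preserves the dimension of the Lie point symmetry algebra because it conjugates one symmetry group onto the other. This gives six symmetries directly, and Theorem \ref{doob kbe} converts them back to Doob symmetries of the SDE.
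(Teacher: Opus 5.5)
Your proposal is correct and follows essentially the paper's route: take $k=\int\mu\,dx$ for the map $(\mathrm{Id},\mu)$, note that $M=\frac12(\mu'+\mu^2)=L_t(k)+\frac12 h^2$ for the Brownian generator, and conclude $M'''=0$, so that the first case of Theorem \ref{classification theorem} and then Theorem \ref{doob kbe} apply. Your version is in fact slightly tighter than the paper's: the paper argues the converse direction ($M=0$ implies Doob) and implicitly sets the integration function to zero, whereas you keep $k=\int\mu\,dx+g(t)$ and derive the implication actually needed, namely Doob $\Rightarrow$ $M=-g'$ is constant.
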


\begin{proof}
    Theorem \ref{classification theorem} ensures that $\text{SDE}_{\mu,1}$ admits six Doob symmetries if and only if 
    \[ M'''=0, \]
    where 
    \( M\) is given by \eqref{Expression_M}.
    However, $\text{SDE}_{\mu,1}$ is obtained from standard Brownian motion via the remodeling map $T=(\text{Id}, \text{id}, h)$ with $h=\mu$ (see Section \ref{sezione 1.3}). Since we are in one dimension, every Girsanov-type transformation is a quasi-Doob transformation. Thus, in this case, we have 
    \[ \mu=\partial_x \int \mu(x)\,dx, \]
    so we define 
    \[  k =\int \mu(x)\,dx. \]
    If we denote by $L_t=\partial_t+\frac{1}{2}\partial_{xx}$ the infinitesimal generator of standard Brownian motion, we have that 
    \[ M(x)=\frac{1}{2}(\mu'+\mu^2)=L_t( k )+\frac{1}{2}h^2. \]
    A particular case of the condition $M'''=0$ occurs when $M=0$. This implies that there exists a function $k$ such that $h=\partial_x k$ and 
    \[ L_t(k)+\frac{1}{2}h^2=0, \]
    which, given the unit diffusion coefficient of standard Brownian motion, means that $T$ is a Doob transformation from Brownian motion to $\text{SDE}_{\mu,1}$ (see Section \ref{section_3}).
\end{proof}

\begin{osservazione}
  The case discussed in Corollary \ref{corollario doob} occurs, for instance, in the Bessel process with a drift term of $\frac{1}{x}$ (see Example \ref{bessel processes}). Naturally, not every equation admitting a maximal algebra of Doob symmetries is obtained via a Doob-type remodeling map. A notable example is the Ornstein-Uhlenbeck process, which does not arise from a Doob transformation (see Example \ref{ornstein uhlenbeck}).
\end{osservazione}

\begin{osservazione}
The standard reference for a comprehensive symmetry classification of autonomous Fokker-Planck-type equations is arguably \cite{cicogna}. In this paper, the authors provide the following classification (rewritten using our notation):
\begin{itemize}
    \item If $M'''=0$, the symmetry algebra is maximal and six-dimensional (four non-trivial symmetries plus the two trivial ones arising from autonomy and linearity).
    \item If $M''' \neq 0$ but $M$ satisfies
    \begin{equation}\label{M cicogna}
        3M'=(c_1-M'')(x-c_0),
    \end{equation}
    then the symmetry algebra is four-dimensional (two non-trivial symmetries plus the two trivial ones).
    \item If $M''' \neq 0$ and \eqref{M cicogna} does not hold, the equation only admits the trivial symmetries.
\end{itemize}

Although derived using different techniques, this classification perfectly matches the one we obtained in Theorem \ref{classification theorem} via a probabilistic approach. Indeed, the condition for the maximal algebra is identical. Regarding condition \eqref{M cicogna}, we know from the first part of Theorem \ref{classification theorem}, together with the correspondence result in Theorem \ref{doob kbe}, that a necessary condition to admit non-trivial symmetries is that
\[ M(x)=m_0+m_1x+m_2x^2+\frac{m_3}{(x+m_4)^2}, \]
for some constants $m_i$, $i=0,\dots,4$.

Substituting the expressions for $M'$ and $M''$ into \eqref{M cicogna}, we obtain
\[ (3m_1+c_0c_1-2c_0m_2)+x(8m_2-c_1)-\frac{6m_3}{(x+m_4)^4}(c_0+m_4)=0, \]
which yields $c_0=-m_4$, $c_1=8m_2$, and
\[ m_1=2m_2m_4. \]

Thus, our conditions match exactly those given by \cite{cicogna}, demonstrating that the probabilistic approach is both fruitful and effective.
\end{osservazione}

\begin{osservazione}
    In \cite{gungor}, Theorem 3.1 provides a comprehensive classification of the symmetry algebras of one-dimensional, second-order linear parabolic PDEs, derived using different techniques. This classification is consistent with the one we established in Theorem \ref{classification theorem} using a probabilistic approach. Indeed, rewritten in our notation, Theorem 3.1 of \cite{gungor} states that a PDE of the form \eqref{generic kbe} admits a six-dimensional symmetry algebra if and only if the quantity \(M\) in \eqref{Expression_M} is a polynomial of degree two. Furthermore, it admits a four-dimensional symmetry algebra if and only if 
    $$M(x)=c_0+c_2x^2+\frac{c_3}{x^2},$$
    for some constants $c_i$, $i=0,2,3$, with $c_3 \neq 0$. 
    
    This is completely consistent with our Theorem \ref{classification theorem}. The condition for the maximal algebra is exactly the same, while the condition $m_1=2m_2m_4$, which we derived for the four-dimensional algebra, corresponds to the one given by \cite{gungor} after completing the square and applying Lemma \ref{mu 1}. Indeed, if we impose $m_1=2m_2m_4$ in 
    $$M(x)=m_0+m_1x+m_2x^2+\frac{m_3}{(x+m_4)^2},$$
    we can rewrite $M$ as follows:
    $$M(x)=\tilde{m}_0+m_2(x+m_4)^2+\frac{m_3}{(x+m_4)^2},$$
    where $\tilde{m}_0$ is a suitable constant. Performing the change of variables $x \mapsto x-m_4$, Lemma \ref{mu 1} ensures that the transformed equation shares the same number of symmetries as the original KBE. After this change of variables, the new potential is simply
    $$M(x)=\tilde{m}_0+m_2x^2+\frac{m_3}{x^2},$$
    which exactly matches the condition presented in \cite{gungor}.
\end{osservazione}

\begin{osservazione}
It is natural that, to achieve a maximal symmetry algebra, the quantity $M$ defined in \eqref{Expression_M} must be quadratic. Indeed, any $\text{SDE}_{\mu,1}$ can be mapped to standard Brownian motion via a change of measure with $h=\mu$. Given that $M$ acts as an invariant determining the number of symmetries of a given equation, and since Brownian motion exhibits a maximal symmetry algebra, the functional form of $M$ is intrinsically linked to the function $h$ induced by the symmetries of Brownian motion. Specifically, reconstructing the flow of \eqref{Vab} yields a quadratic $h$ for the family $V_\alpha$ and a constant $h$ for the family $V_\beta$. Consequently, the invariant $M=\frac{1}{2}(\mu'+\mu^2)=\frac{1}{2}(h_x+h^2)$ is necessarily quadratic.
\end{osservazione}

\begin{osservazione}\label{schrodinger}
    The quantity M, which characterizes the number of symmetry generators, also admits an interesting physical interpretation. In fact, formally it is possible to recast the KBE \eqref{generic kbe} into a Schrödinger-type equation through the following change of variables
\[t\mapsto -it, \ \ \ \ u\mapsto e^{-\int \mu(y)dy} u,\]
where the first change of time variable is known in literature as 'Wick rotation'. Indeed, with these changes, the KBE related to \( \text{SDE}_{\mu,1}\) becomes
\[ i u_t= \frac{1}{2} u_{xx}-\frac{1}{2}(\mu'+\mu^2)u,\]
that is a Schrodinger-type equation with potential $ V=M=\frac{1}{2}(\mu'+\mu^2).$ Thus, the condition $ M'''=0$ allows an at most quadratic potential: this is consistent with what is already well known in literature \cite{niederer,boyer}, since Schrodinger-type equations with at most quadratic potential admit the same extended group of symmetries as the free equation ($ V=0$). 
\end{osservazione}

We can now extend our classification to SDEs with non-normalized diffusion coefficient via the following theorem.
\begin{teorema}\label{classification mu sigma}
Consider an autonomous $\text{SDE}_{\mu,\sigma}$ given by 
\begin{equation}\label{generic sde a}
    dX_t=\mu(X_t)dt+\sigma(X_t)dW_t
\end{equation}
and the associated KBE given by 
\begin{equation}\label{generic kbe a}
    u_t+\mu(x)u_x+\frac{1}{2}\sigma^2(x)u_{xx}=0.
\end{equation}
A necessary condition for \eqref{generic sde a} to admit Doob symmetries is that
\begin{equation}\label{Cnec1}
    M(x)=m_0+m_1 x + m_2 x^2 + \frac{m_3}{(x+m_4)^2},
\end{equation}
where 
\begin{equation}
    \label{M mu sigma}
    M(x)=\frac{1}{2}\left( \frac{\mu^2}{\sigma^2}+\mu'-\frac{2\mu\sigma'}{\sigma}+\frac{1}{4}\sigma'^2-\frac{1}{2}\sigma\sigma''\right)\circ\Phi^{-1}
\end{equation}
and $m_0,\dots,m_4$ are arbitrary constants. In other words, the KBE \eqref{generic kbe a} admits non-trivial symmetries only if \eqref{Cnec1} is satisfied. \\
Conversely,  considering \eqref{Cnec1}, we have the following cases:
\begin{itemize}
    \item If $m_3=0$, then \eqref{generic sde a} admits six Doob symmetries; i.e., the KBE \eqref{generic kbe a} possesses a maximal symmetry algebra of dimension six. In this case, $M'''=0$.
    \item If $m_3 \neq 0$ (so $M''' \neq 0$), but $m_1=2m_2m_4$, then \eqref{generic sde a} admits four Doob symmetries; i.e., the associated KBE \eqref{generic kbe a} admits a four-dimensional Lie symmetry algebra.
    \item If $m_3 \neq 0$ ($M''' \neq 0$) and $m_1 \neq 2m_2m_4$, then the KBE \eqref{generic kbe a} admits only the two trivial symmetries arising from linearity and autonomy.
\end{itemize}
\end{teorema}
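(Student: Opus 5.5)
The plan is to reduce Theorem \ref{classification mu sigma} to Theorem \ref{classification theorem} through the autonomous Lamperti transformation, and to use Lemma \ref{mu 1} to carry the count of Doob symmetries across the reduction. Let $\Phi(x)=F(x)=\int^x \frac{d\xi}{\sigma(\xi)}$ and consider the stochastic transformation $T=((\Phi,\mathrm{id}),0)$. This is a pure diffeomorphism with no change of measure. By Theorem \ref{th lamperti}, or directly from \eqref{ET} with $f=\mathrm{id}$ and $h=0$, it maps $\mathrm{SDE}_{\mu,\sigma}$ to $\mathrm{SDE}_{\bar\mu,1}$, where
\[
\bar\mu=\Big(\frac{\mu}{\sigma}-\frac12\sigma'\Big)\circ\Phi^{-1}.
\]
Since $\sigma$ does not depend on $t$, the term $\partial_t F$ vanishes, so $\bar\mu$ depends only on $y=\Phi(x)$ and the reduced equation is again autonomous.

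Lemma \ref{mu 1} shows that the push-forward $T_*$ sends Doob symmetries of $\mathrm{SDE}_{\mu,\sigma}$ to Doob symmetries of $\mathrm{SDE}_{\bar\mu,1}$. Applying the same lemma to $T^{-1}$, which is also a pure diffeomorphism, gives the reverse direction. Since $T_*$ is linear and $(T^{-1})_*T_*=\mathrm{id}$, it is a Lie algebra isomorphism between the two spaces of Doob symmetries. The trivial symmetry $V=0$, which corresponds to $u\partial_u$, is sent to itself. The autonomy symmetry $\partial_t$ is also preserved: here $m=1$, $Y=0$, $h=0$ and $\Phi_t=0$, so \eqref{pushforward} returns $\partial_t$. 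Therefore the dimension counts $6$, $4$ and $2$, and the distinction between trivial and non-trivial symmetries, are the same for both equations. Through Theorem \ref{doob kbe}, the same holds for the Lie point symmetries of the two KBEs.

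It then suffices to apply Theorem \ref{classification theorem} to $\mathrm{SDE}_{\bar\mu,1}$ with $M=\frac12(\bar\mu'+\bar\mu^2)$, where the prime now means $d/dy$. The only genuine computation is to show that this equals \eqref{M mu sigma}. I will use the chain rule $\frac{d}{dy}=\sigma\frac{d}{dx}$, so that
\[
\bar\mu'=\sigma\Big(\frac{\mu}{\sigma}-\frac12\sigma'\Big)'=\mu'-\frac{\mu\sigma'}{\sigma}-\frac12\sigma\sigma''.
\]
Expanding the square gives
\[
\bar\mu^2=\frac{\mu^2}{\sigma^2}-\frac{\mu\sigma'}{\sigma}+\frac14\sigma'^2.
\]
Adding the two and halving yields exactly $\frac12\big(\frac{\mu^2}{\sigma^2}+\mu'-\frac{2\mu\sigma'}{\sigma}+\frac14\sigma'^2-\frac12\sigma\sigma''\big)$, composed with $\Phi^{-1}$. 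The three cases on $m_3$ and on $m_1-2m_2m_4$, together with the necessary condition \eqref{Cnec1}, then transfer verbatim from Theorem \ref{classification theorem}. All conditions are read in the Lamperti variable $y$, which explains the composition with $\Phi^{-1}$.

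The main point requiring care is the transfer step rather than the algebra. One must check that Lemma \ref{mu 1} really gives a bijection that respects the counting conventions, in particular the treatment of $V=0$ as an independent trivial symmetry and of $\partial_t$ as the autonomy symmetry. One must also confirm that the Lamperti map is globally well defined under Assumption A, so that Remark \ref{invertible lamperti} applies. If the global invertibility fails on a proper interval $I$, I would restrict to the local statement, which suffices for the classification of infinitesimal symmetries.
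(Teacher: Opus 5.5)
Your proposal is correct and follows the paper's proof. You reduce to $\mathrm{SDE}_{\bar\mu,1}$ by the Lamperti diffeomorphism, transfer the count of Doob symmetries with Lemma \ref{mu 1}, and apply Theorem \ref{classification theorem} with $M=\frac12(\bar\mu'+\bar\mu^2)$. You also spell out details the paper leaves implicit, and they are all correct: the chain-rule computation of $M$, and the fact that $T_*$ and $(T^{-1})_*$ give a bijection fixing $V=0$ and $\partial_t$.
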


\begin{proof}
    The proof is a direct consequence of Theorem \ref{classification theorem} and Lemma \ref{mu 1}. Indeed, applying the Lamperti transformation $\Phi$ (see Section \ref{sezione lamperti}) to \eqref{generic sde a}, the transformed equation, denoted as $\text{SDE}_{\tilde{\mu},1}$, reads
    \[ dX_t=\tilde{\mu}(X_t)dt+dW_t, \]
    where $\tilde{\mu}=E_T(\mu)$, as given by \eqref{ET}, satisfies (remember that since \( f=\mathrm{id}\), then \(\Phi^f=\Phi\))
    \[ \tilde{\mu}=E_T(\mu)=L_t(\Phi)\circ\tilde{\Phi}^{-1}. \]
    Lemma \ref{mu 1} ensures that $\text{SDE}_{\tilde{\mu},1}$ and $\text{SDE}_{\mu,\sigma}$ admit the same number of symmetries. Hence, we can leverage Theorem \ref{classification theorem} to classify the dimension of the symmetry algebra by substituting $\tilde{\mu}$ in terms of $\mu$, noting that 
    \[ M(x)=\frac{1}{2}(\tilde{\mu}'+\tilde{\mu}^2)=\frac{1}{2}\left( \frac{\mu^2}{\sigma^2}+\mu'-\frac{2\mu\sigma'}{\sigma}+\frac{1}{4}\sigma'^2-\frac{1}{2}\sigma\sigma''\right)\circ\tilde{\Phi}^{-1}.\]
\end{proof}

\begin{osservazione}\label{costruttivo sigma}
The previous result is constructive too. We can explicitly determine the Doob symmetries of $\text{SDE}_{\mu,\sigma}$ in \eqref{generic sde a} by leveraging Remark \ref{costruttivo}, the Lamperti transformation, and Lemma \ref{mu 1}. First, by applying Remark \ref{costruttivo}, we compute the Doob symmetries of $\text{SDE}_{\tilde{\mu},1}$. Then, since $\text{SDE}_{\mu,\sigma}$ is obtained from $\text{SDE}_{\tilde{\mu},1}$ via the inverse Lamperti transformation, we apply Lemma \ref{mu 1}. Specifically, given an infinitesimal symmetry \(V=(\tilde{Y}, H ) \), with \( \tilde{Y}=(Y,m)\), of $\text{SDE}_{\tilde{\mu},1}$, the corresponding symmetry of $\text{SDE}_{\mu,\sigma}$ is given by $T_*(V)$. Here, $T=(\tilde{\Phi}, 0)$ with \( \tilde{\Phi}=(\Phi,\mathrm{id})\) and $\Phi=F^{-1}$ is the inverse of the Lamperti map \eqref{Lamperti}. The push-forward $T_*(V)$ is explicitly given by \eqref{pushforward}.
\end{osservazione}

\section{Examples}\label{sezione esempi}

\subsection{Bessel processes}\label{bessel processes}
Consider the generalized Bessel process
\begin{equation}\label{Bessel}
    dX_t= \frac{a}{X_t}dt + dW_t
\end{equation}
and the related KBE
\begin{equation}\label{kbe bessel}
    u_t+\frac{a}{x}u_x+\frac{1}{2}u_{xx}=0.
\end{equation}
Applying formulas (\ref{Reconstruction map}), the reconstruction map that allows us to recover equation (\ref{Bessel}) from one-dimensional Brownian motion (\ref{Brownian_motion_SDE}) is
\begin{equation}\label{trasformazione bessel}
    \Phi=\mathrm{id}; \quad h=\frac{a}{x}.
\end{equation}
Since the diffusion coefficient is already equal to 1, we do not need to apply the Lamperti transformation and we can directly apply Theorem \ref{classification theorem} and compute
\[ M = \frac{1}{2} (\mu'+\mu^2) = \frac{1}{2} \frac{a(a-1)}{x^2}.  \]
Thus, with the notations of Theorem \ref{classification theorem}, 
\begin{equation}\label{coefficienti} m_0 = m_1 = m_2 = m_4 = 0, \quad \quad m_3=\frac{a(a-1)}{2}.\end{equation}
Therefore, we have the following cases: 
\begin{itemize}
    \item If $a=0$, then $M=0$, and \eqref{Bessel} admits six Doob symmetries (including the trivial symmetry given by the identity; see Remark \ref{Xi6}). Consequently, by Theorem \ref{doob kbe fin}, \eqref{kbe bessel} also admits a maximal algebra of six Lie point symmetries. This is a trivial case, since for $a=0$, \eqref{Bessel} reduces to standard Brownian motion. Hence, the Doob symmetries of \eqref{Bessel} with $a=0$ are the same as those computed in Section \ref{sottosezione 4.1}, while the Lie point symmetries of \eqref{kbe bessel} with $a=0$ are presented in Section \ref{sottosezione 4.2}.

\item If $a=1$, then $M=0$ again, and Theorem \ref{classification theorem} ensures that \eqref{Bessel} admits six Doob symmetries (including the trivial symmetry given by the identity; see Remark \ref{Xi6}). To compute these symmetries, we exploit Remark \ref{costruttivo}: we substitute $m_0=m_1=m_2=m_3=m_4=0$, obtained from \eqref{coefficienti}, into \eqref{sistema} and solve the resulting system. We thus find $\alpha'''=0$ and $\beta''=0$. Substituting these into \eqref{simtras} and including the trivial symmetry, we conclude that the Doob symmetries of \eqref{Bessel} for $a=1$ are given by:
\begin{align*}
    &V_1=\Big( \partial_t, \ 0\Big),\\ 
    &V_2=\Big( x \partial_x +  2t\partial_t, \ 0\Big),\\
    &V_3=\Big( 2xt\partial_x + 2t^2\partial_t, \ -2x\Big),\\
    &V_4=\Big( \partial_x ,   \ -\frac{1}{x^2}\Big),\\
    &V_5=\Big(t \partial_x ,  \ -1-\frac{t}{x^2}\Big),\\
    &V_6=\Big( 0,  \ 0\Big).
\end{align*}

Theorem \ref{doob kbe} ensures that \eqref{kbe bessel} also admits a maximal algebra of six Lie point symmetries, given by $\Xi_i=m_i(t)\partial_t+Y_i(x,t)\partial_x-K_i(x,t)u\partial_u$ for $i=1,\dots,6$, where $m_i$ and $Y_i$ correspond to those in $V_i$, and $K_i$ is determined by \eqref{k} after solving the equation for $\epsilon$ in \eqref{sistema}. Thus, the six-dimensional symmetry algebra of \eqref{kbe bessel} with $a=1$ is spanned by the following generators:
\begin{align*}
    &\Xi_1=\partial_t,\\
    &\Xi_2=2t\partial_t+x\partial_x,\\
    &\Xi_3=2t^2\partial_t+2xt\partial_x- (3t-x^2)u\partial_u,\\
    &\Xi_4=\partial_x- \frac{1}{x}u\partial_u,\\
    &\Xi_5=t\partial_x-  \left(\frac{t}{x}-x\right)u\partial_u,\\
    &\Xi_6=u\partial_u.
\end{align*}
  \item If $a \neq 0$ and $a \neq 1$, then from \eqref{coefficienti} we have $M''' \neq 0$ but $m_1=2m_2m_4$. Hence, Theorem \ref{classification theorem} ensures that \eqref{Bessel} admits four Doob symmetries (including the trivial symmetry given by the identity; see Remark \ref{Xi6}). To compute these symmetries, we exploit Remark \ref{costruttivo}: substituting $m_0=m_1=m_2=m_4=0$ and $m_3=\frac{a(a-1)}{2}$, obtained from \eqref{coefficienti}, into \eqref{sistema}, we solve the resulting system, recalling that \eqref{beta 4dim} and \eqref{alpha 4dim} hold in this regime. We thus find $\alpha''=0$ and $\beta=0$. Substituting these into \eqref{simtras} and including the trivial symmetry, we conclude that the Doob symmetries of \eqref{Bessel} for a generic $a \neq 0,1$ are given by:
\begin{align*}
    &V_1=\Big(\partial_t, \ 0\Big),\\
    &V_2=\Big( x\partial_x + 2t \partial_t, \ 0\Big),\\
    &V_3=\Big( 2xt\partial_x + 2t^2\partial_t, \ -2x\Big),\\
    &V_4=\Big( 0,  \ 0\Big).
\end{align*}

Theorem \ref{doob kbe} ensures that \eqref{kbe bessel} also admits an algebra of four Lie point symmetries, given by $\Xi_i=m_i(t)\partial_t+Y_i(x,t)\partial_x-K_i(x,t)u\partial_u$ for $i=1,\dots,4$, where $m_i$ and $Y_i$ correspond to those in $V_i$, and $K_i$ is determined by \eqref{k} after solving the equation for $\epsilon$ in \eqref{sistema}. Thus, the four-dimensional symmetry algebra of \eqref{kbe bessel} with a generic $a \neq 0,1$ is spanned by the following generators:
\begin{align*}
    &\Xi_1=\partial_t,\\
    &\Xi_2=2t\partial_t+x\partial_x,\\
    &\Xi_3=2t^2\partial_t+2xt\partial_x-[t(2a+1)-x^2]u\partial_u,\\
    &\Xi_4=u\partial_u.
\end{align*}
\end{itemize}

\begin{osservazione}\label{Bessel condizionamento}
It is not surprising that for $a=1$, \eqref{Bessel} admits a maximal symmetry algebra. Indeed, in this regime, it can be shown that the Bessel process is actually a Brownian motion conditioned never to return to the origin (see \cite[Example 1]{chetrite2015nonequilibrium}). Furthermore, in this case, we have $M=0$, which corresponds exactly to the scenario discussed in Corollary \ref{corollario doob} for a process obtained from standard Brownian motion via a Doob transformation.
\end{osservazione}

\begin{osservazione}
    The fact that, even in the case \( a \neq 0,1 \), we determine the symmetries of \eqref{Bessel} via a reduction map to Brownian motion might appear to contradict the classical result by Sophus Lie (\cite{lie1881}). According to this result, a linear parabolic PDE  can be mapped, one-to-one, into the heat equation (the governing PDE of a Wiener process) if and only if it is invariant under a six-parameter continuous group (Lie group) of point transformations. 

Indeed, for \( a \neq 0,1 \), the PDE \eqref{kbe bessel} admits only 4 symmetries. However, from Theorem \ref{teo quasi doob}, we know that the transformation corresponding to \eqref{trasformazione bessel} is \( \hat{T}=(\tilde{\Phi}, \theta) \), with \( \tilde{\Phi}=\mathrm{Id} \) and \( \theta = \frac{1}{x^a} \), which maps the heat equation \eqref{Brownian_motion_KBE} into
\[
    u_t + \frac{a}{x} u_x + \frac{1}{2} u_{xx} + \frac{a(a-1)}{2x^2}u = 0.
\]
This latter equation does, in fact, admit 6 symmetries. Consequently, Lie's result is entirely consistent with our strategy for determining symmetries: the introduction of the potential term is precisely what compensates the equation, enabling it to attain the maximal symmetry algebra.
\end{osservazione}

\begin{osservazione}\label{remark bessel localization}
The reconstruction map \eqref{trasformazione bessel} has to be interpreted with some care, due to the singularity of
\[
    h(x)=\frac{a}{x}
\]
at the origin. Let \(B_t=x_0+W_t\), with \(x_0>0\), and consider the state space \(I=(0,+\infty)\). Since Brownian motion may hit the boundary \(0\) in finite time, the global non-explosion criterion for Girsanov transformations does not apply directly.

Nevertheless, the transformation is well defined locally. Indeed, for
\[
    \tau_n
    :=
    \inf\left\{
        t\geq0:
        B_t\notin\left(\frac1n,n\right)
    \right\},
\]
the process \(h(B_{t\wedge\tau_n})\) is bounded. Hence Novikov's condition is satisfied on every finite time interval, and the corresponding Dol\'eans--Dade exponential is a true martingale. By Girsanov's theorem, under the transformed measure,
\[
    W_t^{(n)}
    :=
    W_t-
    \int_0^{t\wedge\tau_n}
    \frac{a}{B_s}\,ds
\]
is a Brownian motion and, up to \(\tau_n\),
\[
    dB_t=\frac{a}{B_t}\,dt+dW_t^{(n)}.
\]
Thus \eqref{trasformazione bessel} is rigorously well defined after localization, and hence locally on \((0,+\infty)\).

Moreover, when \(a\geq\frac12\), the corresponding Bessel process does not hit the origin, so the localized construction extends to the global dynamics on \((0,+\infty)\). For \(a=0\), the transformation is trivial, since \(h\equiv0\).
\end{osservazione}

\subsection{Ornstein-Uhlenbeck}\label{ornstein uhlenbeck}\label{Esempio OU}

Consider the Ornstein-Uhlenbeck (OU) process defined by the SDE
\begin{equation}\label{SDE OU}
    dX_t = \theta(\mu - X_t)dt + \sigma dW_t,
\end{equation}
with \( \theta,\sigma >0\),
and its associated KBE
\begin{equation}\label{KBE OU}
    u_t + \theta(\mu - x)u_x + \frac{1}{2}\sigma^2 u_{xx} = 0.
\end{equation}
Since the diffusion coefficient is not normalized, we apply Theorem \ref{classification mu sigma} (instead of Theorem \ref{classification theorem}) to compute 
\[ M(x) = \frac{1}{2}\left( \frac{\mu^2}{\sigma^2} + \mu' - \frac{2\mu\sigma'}{\sigma} + \frac{1}{4}\sigma'^2 - \frac{1}{2}\sigma\sigma'' \right) \circ F^{-1}, \]
where $F$ is the Lamperti map $F(x) = \int \frac{dx}{\sigma} = \frac{x}{\sigma}$. Substituting the parameters of the OU process, we obtain
\[ M(x) = \left( \frac{\theta^2 \mu^2}{2\sigma^2} - \frac{\theta}{2} \right) - \frac{\theta^2 \mu}{\sigma}x + \frac{\theta^2}{2}x^2. \]
Thus, following the notation of Theorem \ref{classification mu sigma}, the coefficients are:
\begin{equation}\label{mi OU}
    m_0 = \frac{\theta^2 \mu^2}{2\sigma^2} - \frac{\theta}{2}, \quad m_1 = -\frac{\theta^2 \mu}{\sigma}, \quad m_2 = \frac{\theta^2}{2}, \quad m_3 = m_4 = 0.
\end{equation}
Theorem \ref{classification mu sigma} ensures that the SDE \eqref{SDE OU} admits six Doob symmetries (including the trivial identity symmetry). To compute these, we substitute the coefficients \eqref{mi OU} into the system \eqref{sistema}, yielding:
\[
\begin{cases}
    \alpha'''(t) = 4\theta^2 \alpha'(t), \\
    \beta''(t) = \theta^2 \beta(t) - \frac{3\theta^2 \mu}{2\sigma} \alpha'(t).
\end{cases}
\]
Solving this system leads to the following solutions $(\alpha, \beta)$:
\begin{equation}\label{sol ou}
    (1, 0), \quad (e^{2\theta t}, -\tfrac{\theta \mu}{\sigma}e^{2\theta t}), \quad (e^{-2\theta t}, \tfrac{\theta \mu}{\sigma}e^{-2\theta t}), \quad (0, e^{\theta t}), \quad (0, e^{-\theta t}).
\end{equation}
We then apply the Lamperti map \( F \) to leverage Remark \ref{costruttivo} and map the symmetries to the reduced SDE \[dX_t = \tilde{\mu}dt + dW_t,\] where \[\tilde{\mu} = \frac{\theta}{\sigma}(\mu - \sigma x).\] Using the push-forward defined by \eqref{pushforward} with $\Phi = F^{-1}$, we conclude that the six Doob symmetries $V_i$ of \eqref{SDE OU} are:
\begin{align*}
    V_1 &= \left( \partial_t, \ 0 \right), \\
    V_2 &= \left(  -e^{2\theta t} \theta (\mu - x)\partial_x +  e^{2\theta t}\partial_t, \ \frac{4\theta^2}{\sigma} e^{2\theta t} (\mu - x) \right), \\
    V_3 &= \left(  \theta e^{-2\theta t}(\mu - x)\partial_x + e^{-2\theta t}\partial_t, \  0 \right), \\
    V_4 &= \left( \sigma e^{\theta t}\partial_x,  \ -2\theta e^{\theta t} \right), \\
    V_5 &= \left( \sigma e^{-\theta t}\partial_x,   \ 0 \right), \\
    V_6 &= \left(  0,  \  0 \right).
\end{align*}
Theorem \ref{doob kbe} ensures that \eqref{KBE OU} also admits a six-dimensional Lie point symmetry algebra, spanned by $\Xi_i = m_i(t)\partial_t + Y_i(x,t)\partial_x - \bar{K}_i(x,t)u\partial_u$ for $i=1,\dots,6$. 
The coefficients $m_i$ and $Y_i$ correspond to those in the Doob symmetries $V_i$, while $\bar{K}_i = K \circ \tilde{\Phi}^{-1}$ (see Lemma \ref{mu 1}). 
Here, $K$ is determined by the relation \eqref{k}, which follows from solving the equation for $\epsilon$ in the system \eqref{sistema}. 
Thus, the symmetry algebra of \eqref{KBE OU} is spanned by:

\begin{align*}
    \Xi_1 &= \partial_t, \\
    \Xi_2 &= -e^{2\theta t}\theta (\mu - x)\partial_x + e^{2\theta t}\partial_t - \left[ -\frac{2 \theta^2 e^{2\theta t}}{\sigma^2}x^2 + \frac{4 \theta^2\mu e^{2\theta t}}{\sigma^2}x - 2 \frac{\theta^2 \mu^2 e^{2\theta t}}{\sigma^2} + \theta e^{2 \theta t} \right]u\partial_u, \\
    \Xi_3 &= \theta e^{-2\theta t}(\mu - x)\partial_x + e^{-2\theta t} \partial_t, \\
    \Xi_4 &= \sigma e^{\theta t}\partial_x - \frac{2\theta e^{\theta t}}{\sigma}(\mu - x) u \partial_u, \\
    \Xi_5 &= \sigma e^{-\theta t}\partial_x, \\
    \Xi_6 &= u\partial_u.
\end{align*}

\subsection{CIR model}
Consider the CIR model
\begin{equation}\label{CIR SDE}
dX_t=(aX_t+b)dt+\sigma \sqrt{X_t}dW_t
\end{equation}
and the related KBE 
\begin{equation}\label{CIR KBE}
u_t+(ax+b)u_x+\frac{1}{2}\sigma^2 x u_{xx}=0.
\end{equation}
We assume throughout this example that \(X_0=x_0>0\), \(\sigma>0\), \(b>0\), and that the Feller condition
\[
    2b\geq \sigma^2
\]
is satisfied. Under this condition, the boundary \(0\) is unattainable for the CIR process, and therefore
\[
    \mathbb{P}(X_t>0 \text{ for all } t\geq0)=1.
\]
Hence, the natural state space can be taken to be \(I=(0,+\infty)\), on which both the coefficients of the SDE and the Lamperti transformation used below are smooth. 
Since the diffusion coefficient is not normalized, we apply Theorem \ref{classification mu sigma} (instead of Theorem \ref{classification theorem}) to compute
\[ M(x) = \frac{1}{2}\left( \frac{\mu^2}{\sigma^2} + \mu' - \frac{2\mu\sigma'}{\sigma} + \frac{1}{4}\sigma'^2 - \frac{1}{2}\sigma\sigma'' \right) \circ F^{-1}, \]
where $F$ is the Lamperti map $F(x) = \int \frac{dx}{\sigma \sqrt{x}} = \frac{2\sqrt{x}}{\sigma}$. Substituting the parameters of the CIR model, we obtain
\[ M(x) = \left[\frac{ab}{\sigma^2}+\frac{a^2}{8}x^2+\left(\frac{2b^2}{\sigma^4}-\frac{2b}{\sigma^2}+\frac{3}{8}\right)\frac{1}{x^2} \right]. \]
Thus, following the notation of Theorem \ref{classification mu sigma}, the coefficients are:
\begin{equation}\label{mi CIR}
    m_0 =\frac{ab}{\sigma^2} , \quad m_1 =0 , \quad m_2 =\frac{a^2}{8}, \quad m_3 =\frac{2b^2}{\sigma^4}-\frac{2b}{\sigma^2}+\frac{3}{8}, \quad m_4 =0 .
\end{equation}

Setting
\[
    r=\frac{b}{\sigma^2},
\]
the coefficient \(m_3\) in \eqref{mi CIR} can be written as
\[
    m_3=2r^2-2r+\frac{3}{8}.
\]
Hence
\[
    m_3=0
    \quad\Longleftrightarrow\quad
    r=\frac14
    \quad\text{or}\quad
    r=\frac34.
\]
The first possibility is excluded by the Feller condition, since
\(r\geq1/2\). Therefore, within the standard CIR regime, the only exceptional
case is
\[
    b=\frac34\sigma^2.
\]

If
\[
    b\neq\frac34\sigma^2,
\]
then \(m_3\neq0\), while \(m_1=2m_2m_4=0\), and Theorem \ref{classification mu sigma} ensures that SDE \eqref{CIR SDE} admits four Doob symmetries (including the trivial identity symmetry). To compute these, we substitute the coefficients \eqref{mi CIR} into the system \eqref{sistema}, recalling that \eqref{beta 4dim} and \eqref{alpha 4dim} hold in this regime. We thus find \( \beta=0\) and
\[
    \alpha'''(t) =a^2 \alpha'(t) 
\]
Solving these equations leads to the following solutions $(\alpha, \beta)$:
\begin{equation}\label{sol cir}
(1,0), \quad (e^{at},0), \quad (e^{-at},0).
\end{equation}
We then apply the Lamperti map to leverage Remark \ref{costruttivo} and map the symmetries to the reduced SDE \[dX_t = \tilde{\mu}dt + dW_t,\] where \[\tilde{\mu} = L_t(F)\circ F^{-1}=\frac{a}{2}x+\left(\frac{2b}{\sigma^2}-\frac{1}{2}\right)\frac{1}{x}. \] Using the push-forward defined by \eqref{pushforward} with $\Phi = F^{-1}$, we conclude that the four Doob symmetries $V_i$ of \eqref{CIR SDE} are:
\begin{align*}
    V_1 &= \left( \partial_t, \ 0\right) , \\
    V_2 &= \left( a e^{a t}x\partial_x + e^{at}\partial_t, \  0 \right),\\
    V_3 &=\left( -a e^{-at}x \partial_x + e^{-at}\partial_t, \  - \frac{2a^2}{\sigma} e^{-at} \sqrt{x}\right),\\
    V_4 &= \left(  0 , \ 0 \right).
\end{align*}
Theorem \ref{doob kbe} ensures that \eqref{CIR KBE} also admits a four-dimensional Lie point symmetry algebra, spanned by $\Xi_i = m_i(t)\partial_t + Y_i(x,t)\partial_x - \bar{K}_i(x,t)u\partial_u$ for $i=1,\dots,4$. 
The coefficients $m_i$ and $Y_i$ correspond to those in the Doob symmetries $V_i$, while $\bar{K}_i = K \circ \Phi^{-1}$ (see Lemma \ref{mu 1}). 
Here, $K$ is determined by the relation \eqref{k}, which follows from solving the equation for $\epsilon$ in the system \eqref{sistema}. 
Thus, the symmetry algebra of \eqref{CIR KBE} is spanned by:

\begin{align*}
    \Xi_1 &= \partial_t, \\
    \Xi_2 &= a e^{at}x\partial_x+e^{at}\partial_t, \\
    \Xi_3 &= -a e^{-at}x\partial_x+e^{-at}\partial_t+\frac{2a e^{-at}}{\sigma^2}(ax+b) u \partial_u, \\
    \Xi_4 &= u\partial_u.
\end{align*}

On the other hand, if
\[
    b=\frac34\sigma^2,
\]
then \(m_3=0\) and
\[
    M(x)=m_0+m_2x^2,
\]
so that \(M'''=0\). Therefore, by Theorem
\ref{classification mu sigma}, the CIR equation belongs to the maximal
case and admits six Doob symmetries, rather than four.

\bibliographystyle{plain}
\bibliography{references}
\end{document}